\title{On cyclic star-autonomous categories} 
\author{J.M.~Egger and M.B.~McCurdy}
\thanks{%
  First author supported by EPSRC research grant EP/F042043/1. 
  Second author supported by MQRES scholarship.}
\address{%
  School of Informatics, University of Edinburgh
  \\ Informatics Forum, 10 Crichton Street, Edinburgh, UK EH8 9AB
  \\[5pt] Department of Computing, Macquarie University
  \\ North Ryde, NSW, Australia 2109}
\keywords{
  Cyclicity, star-autonomous categories, 
  cyclic star-autonomous categories, 
  enriched profunctors, 
  braidings, balances, tortility, 
  strictification}
\newtheorem{defn}{Definition}
\newtheorem{defs}{Definitions}
\newtheorem{lem}{Lemma}
\newtheorem{thm}{Theorem}
\newtheorem{cor}{Corollary}
\let\tens\varowedge   
\let\parr\varovee     
\def\relcirc{\mathrel{\circ}}   
\def\loll{\mathrel{\relbar\joinrel\relcirc}}  
\def\llol{\mathrel{\relcirc\joinrel\relbar}}  
\let\psymbol\ast
\newlength{\prepfu} 
\def\prep#1{\mathchoice
  {\settoheight{\prepfu}{\ensuremath{\displaystyle#1}}
    \rule{0pt}{\prepfu}^\psymbol\mkern-1mu{#1}}
  {\settoheight{\prepfu}{\ensuremath{\textstyle#1}}
    \rule{0pt}{\prepfu}^\psymbol\mkern-1mu{#1}}
  {\settoheight{\prepfu}{\ensuremath{\scriptstyle#1}}
    \rule{0pt}{\prepfu}^\psymbol\mkern-1mu{#1}}
  {\settoheight{\prepfu}{\ensuremath{\scriptscriptstyle#1}}
    \rule{0pt}{\prepfu}^\psymbol\mkern-1mu{#1}}}
\def\perp#1{\mathchoice
  {\settoheight{\prepfu}{\ensuremath{\displaystyle#1}}
    {#1}\mkern-0mu\rule{0pt}{\prepfu}^\psymbol}
  {\settoheight{\prepfu}{\ensuremath{\textstyle#1}}
    {#1}\mkern-0mu\rule{0pt}{\prepfu}^\psymbol}
  {\settoheight{\prepfu}{\ensuremath{\scriptstyle#1}}
    {#1}\mkern-0mu\rule{0pt}{\prepfu}^\psymbol}
  {\settoheight{\prepfu}{\ensuremath{\scriptscriptstyle#1}}
    {#1}\mkern-0mu\rule{0pt}{\prepfu}^\psymbol}}
\let\qpsymbol\bot
\def\qprep#1{\mathchoice
  {\settoheight{\prepfu}{\ensuremath{\displaystyle#1}}
    \rule{0pt}{\prepfu}^\qpsymbol\mkern-1mu{#1}}
  {\settoheight{\prepfu}{\ensuremath{\textstyle#1}}
    \rule{0pt}{\prepfu}^\qpsymbol\mkern-1mu{#1}}
  {\settoheight{\prepfu}{\ensuremath{\scriptstyle#1}}
    \rule{0pt}{\prepfu}^\qpsymbol\mkern-1mu{#1}}
  {\settoheight{\prepfu}{\ensuremath{\scriptscriptstyle#1}}
    \rule{0pt}{\prepfu}^\qpsymbol\mkern-1mu{#1}}}
\def\qperp#1{\mathchoice
  {\settoheight{\prepfu}{\ensuremath{\displaystyle#1}}
    {#1}\mkern-0mu\rule{0pt}{\prepfu}^\qpsymbol}
  {\settoheight{\prepfu}{\ensuremath{\textstyle#1}}
    {#1}\mkern-0mu\rule{0pt}{\prepfu}^\qpsymbol}
  {\settoheight{\prepfu}{\ensuremath{\scriptstyle#1}}
    {#1}\mkern-0mu\rule{0pt}{\prepfu}^\qpsymbol}
  {\settoheight{\prepfu}{\ensuremath{\scriptscriptstyle#1}}
    {#1}\mkern-0mu\rule{0pt}{\prepfu}^\qpsymbol}}
\def\inverse{^{-1}}
\def\teenyleft{_{\scriptscriptstyle\shortleftarrow}}
\def\teenyright{_{\scriptscriptstyle\shortrightarrow}}
\def\idsymbol{\mathrm{id}}
\let\assocsymbol\alpha
\let\braidsymbol\beta
\let\lulawsymbol\lambda
\let\rulawsymbol\rho
\def\tbraidsymbol{\hat\braidsymbol} 
\def\pbraidsymbol{\check\braidsymbol} 
\let\cadsymbol\kappa
\def\llcadsymbol{{\stackrel{\teenyright}{\cadsymbol}}}
\def\rrcadsymbol{{\stackrel{\teenyleft}{\cadsymbol}}}
\let\balancesymbol\zeta
\let\cyclesymbol\varphi
\let\Cyclesymbol\Phi 
\let\stitchsymbol\xi
\def\nubalancesymbol{\balancesymbol^{\cyclesymbol}}
\def\nunubalancesymbol{\balancesymbol^{\nucyclesymbol}}
\def\nucyclesymbol{\cyclesymbol^{\balancesymbol}}
\def\nuCyclesymbol{\Cyclesymbol^{\balancesymbol}}
\def\nunuCyclesymbol{\Cyclesymbol^{\nubalancesymbol}}
\newcommand{\id}[1][]{\idsymbol_{#1}}
\newcommand{\assoc}[1][]{\assocsymbol_{#1}}
\newcommand{\braid}[1][]{\braidsymbol_{#1}}
\newcommand{\lulaw}[1][]{\lulawsymbol_{#1}}
\newcommand{\rulaw}[1][]{\rulawsymbol_{#1}}
\newcommand{\tbraid}[1][]{\tbraidsymbol_{#1}}
\newcommand{\pbraid}[1][]{\pbraidsymbol_{#1}}
\newcommand{\llcad}[1][]{\llcadsymbol_{#1}}
\newcommand{\rrcad}[1][]{\rrcadsymbol_{#1}}
\newcommand{\balance}[1][]{\balancesymbol_{#1}}
\newcommand{\cycle}[1][]{\cyclesymbol_{#1}}
\newcommand{\Cycle}[1][]{\Cyclesymbol_{#1}}
\newcommand\stitch[1][]{\stitchsymbol_{#1}}
\newcommand\nubalance[1][]{\nubalancesymbol_{#1}}
\newcommand\nunubalance[1][]{\nunubalancesymbol_{#1}}
\newcommand\nucycle[1][]{\nucyclesymbol_{#1}}
\newcommand\nuCycle[1][]{\nuCyclesymbol_{#1}}
\newcommand\nunuCycle[1][]{\nunuCyclesymbol_{#1}}
\newcommand\canon[1][]{\eta} 
\newcommand\demorgan[1][]{\vartheta} 
\newcommand\ioCycle[1][]{\Cycle^{\id}}
\let\zangify\tilde
\newcommand\zangcycle[1][]{\zangify\cyclesymbol}
\let\cedille\c
\def\a{\mathbb{a}}
\def\b{\mathbb{b}}
\def\c{\mathbb{c}}
\let\d=d
\let\e=e
\def\g{\mathbb{g}}
\def\gexplained{((g,\leq),\cdot,\gunit,\blank\inverse)}
\let\j=i
\let\k=k
\def\p{\mathbb{p}}
\def\pexplained{(p,\leq)}
\def\C{\mathcal{C}}
\def\F{\Z_2} 
\def\J{\mathcal{J}}
\def\K{\mathcal{K}}
\def\Kexplained{(\K,{\tens},\e,{\parr},\d,\perp\blank,\prep\blank)}
\def\V{\mathcal{V}}
\def\Vexplained{(\V,{\tens},\e,{\parr},\d,\perp\blank,\prep\blank,\cycle)}
\def\tV{(\V,{\tens},\e)}
\def\Z{\mathbb{Z}}
\let\gunit\eta      
\let\gdo\varepsilon 
\let\copi\upsilon
\def\unmu{\mu_\circ} 
\def\unnu{\nu_\circ} 
\let\newt\varotimes   
\let\jnewt\newt 
\let\knewt\newt
\def\vnewt{\newt_k}
\let\of\circ			
\def\fo{\mathrel{;}}		
\newcommand{\Id}[1][]{\mathrm{Id}_{#1}}
\newcommand{\name}[2][]{{}^\ulcorner{#2}{}^{\urcorner#1}}
\def\lCurry#1{\mathsf{lCurry}(#1)}
\def\rCurry#1{\mathsf{rCurry}(#1)}
\def\invlCurry#1{\mathsf{lCurry}\inverse(#1)}
\def\invrCurry#1{\mathsf{rCurry}\inverse(#1)}
\def\lbind{\mathsf{lbind}}
\def\rbind{\mathsf{rbind}}
\newcommand{\lact}[1][-,-,-]{\left\langle{#1}\right\rangle_{\fo}}
\newcommand{\ract}[1][-,-,-]{\left\langle{#1}\right\rangle_{\fo}}
\def\Zang#1{\mathbf{Adj}_{\Z}(#1)}
\def\Fang#1{\mathbf{Adj}_{\F}(#1)}
\def\Vecfd{\mathbf{Vec}_{\rm fd}}
\newcommand{\Profsymbol}[1][]{\mathfrak{Prof}_{#1}}
\newcommand{\Prof}[2][]{\mathfrak{Prof}_{#1}(#2)}
\let\to\longrightarrow
\let\To\Rightarrow
\def\arrow#1:#2->#3;{#2 \stackrel{#1}{\longrightarrow} #3}
\def\proarrow#1:#2->#3;{#2 \stackrel{#1}{\nrightarrow} #3}
\def\isoto{\stackrel{_\sim}{\longrightarrow}}
\let\iso\cong			
\def\blank{{(~)}}
\def\op{^{\mathsf{op}}}
\def\rv{^{\mathsf{rev}}}
\def\ob{\mathsf{ob}\,}
\newcommand{\xhom}[3][]{\langle#2,#3\rangle_{#1}}
\let\meet\wedge			
\let\implies\Rightarrow		
\let\entails\Rightarrow 
\def\iff{\Leftrightarrow}
\def\evan#1{(#1)} 
\let\card\#
\def\one{\mathbb{1}}
\def\two{2} 
\def\classof#1{\left\{ #1 \right\}}
\newcommand{\familyof}[2][]{\classof{#2}_{#1}}
\newenvironment{diagram}{
  \displaymath
  
  \xymatrix@1@M=3pt@C=5pc}{
  \enddisplaymath}
\newenvironment{inline}{
  \math
  
  \xymatrix@1@M=3pt@C=3pc}{
  \endmath}
\def\wonky#1#2#3{\ar`#1[#2] `[#2]#3 [#2]}  
\def\fixwonky#1#2#3{\ar@{-<}`#1[#2] `[#2]#3 [#2]}
\def\invwonky#1#2#3{\ar@{<-}`#1[#2] `[#2]#3 [#2]}
\def\fixinvwonky#1#2#3{\ar@{>-}`#1[#2] `[#2]#3 [#2]}
\def\nt{natural transformation}
\def\ni{natural isomorphism}
\def\cni{coherent \ni}
\def\ld{linearly distributive}
\def\staut{star-autonomous} 
\def\arbstaut{arbitrary \staut}
\def\brdstaut{braided \staut}
\def\cycstaut{cyclic \staut}
\def\symstaut{symmetric \staut}
\def\boring{compact}
\def\boringness{compactness} 
\def\cthing{cycle} 
\def\cprop{cyclicity} 
\def\ts{$\tens$-semi}
\def\ps{$\parr$-semi}
\def\kq{quasi}
\def\zang{$\Z$-string}
\def\fang{$\F$-string}
\def\etc{\emph{etc.}}
\def\Thatis{That is}	
\def\wrt{with respect to}	
\def\scary#1{``#1''} 		
\def\kimmo{\ensuremath{(\textsc{r})}} 
\def\kprime{\ensuremath{(\textsc{inv})}} 
\def\tnul{\ensuremath{(\textsc{t}_0)}}
\def\tbin{\ensuremath{(\textsc{t}_2)}} 
\def\pnul{\ensuremath{(\textsc{p}_0)}} 
\def\pbin{\ensuremath{(\textsc{p}_2)}} 
\def\blrzero{\ensuremath{(\textsc{blr}_0)}} 
\def\blrtwo{\ensuremath{(\textsc{blr}_2)}} 
\def\mzero{\ensuremath{(\textsc{m}_0)}} 
\def\mtwo{\ensuremath{(\textsc{m}_2)}} 
\def\varmtwo{\ensuremath{(\textsc{m}_2^{-1})}} 
\def\etwo{\ensuremath{(\textsc{e})}} 
\def\eagain{\ensuremath{(\textsc{e}^{-1})}} 
\def\naty{\ensuremath{(\textsc{n})}} 
\def\vartbin{\ensuremath{(!)}}
\def\qpnul{\ensuremath{(?)}}
\def\qkimmo{\ensuremath{(??)}}
\def\tsbax{\ensuremath{({\textsc{\^b}})}}
\def\psbax{\ensuremath{({\textsc{\v{b}}})}}
\def\Rep#1{\mathbf{mod}_{#1}} 
\begin{document}

\maketitle

\begin{abstract}
We discuss \emph{cyclic star-autonomous categories}; 
that is, unbraided \staut\ categories in which the left and right
duals of every object $p$ are linked by coherent natural isomorphism.  
We settle coherence questions which have arisen concerning such
cyclicity isomorphisms, and we show that such cyclic structures are
the natural setting in which to consider enriched profunctors. 
Specifically, if $\V$ is a cyclic star-autonomous category, then the
collection of $\V$-enriched profunctors carries a canonical cyclic
structure. 
In the case of braided star-autonomous categories, we discuss the
correspondences between cyclic structures and balances or tortile
structures. 
Finally, we show that every cyclic star-autonomous category is
equivalent to one in which the cyclicity isomorphisms are identities.  
\end{abstract}

\section{Introduction (Overview)} \label{sec:intr}
In an \arbstaut\ category 
(in particular, one which is not necessarily symmetric \cite{Bar'95}), 
every object $p$ has two duals: 
commonly, one is denoted $\perp{p}$ or $\qperp{p}$, 
the other $\prep{p}$ or $\qprep{p}$.  
(We prefer $\perp{}$ over $\qperp{}$ except, for obscure reasons, 
in the posetal examples below.)  
A \emph{\cycstaut\ poset} is defined in \cite{Yet'90} to be a
\staut\ poset with the property that these two duals always coincide.
It is important to note that, even in the posetal case, cyclicity is a
much weaker phenomenon than symmetry. 

\begin{eg} \label{eg:group}
  It is well-known that any ordered group $\g=\gexplained$
  determines a closed monoidal poset with 
  $\alpha\loll\beta:=\alpha\inverse\cdot\beta$ and 
  $\beta\llol\alpha:=\beta\cdot\alpha\inverse$;
  it follows that {every} element of the group is \emph{dualising}.
  Thus every \emph{pointed ordered group} 
  $(\g,\gdo)$, for $\gdo$ a fixed but arbitrary element of $g$, 
  determines a \staut\ poset. 
  (This example is called an \emph{ordered shift group} in
  \cite{CocSee:ldc}.) 
  Since $\qperp\omega=\omega\loll\gdo$ and 
  $\qprep\omega=\gdo\llol\omega$, we have that 
  $(\g,\gdo)$ is cyclic if and only if $\gdo$ is central.
  In particular, $(\g,\gunit)$ 
  is always cyclic---but it is symmetric if and only if $\g$ is abelian.  
\end{eg} 

\begin{eg} \label{eg:rel}
  Binary relations $\proarrow:s->s;$ 
  (where $s$ is some fixed but arbitrary set) 
  form a \cycstaut\ poset: 
  the tensor product is the usual composition of relations, 
  and the dualising object is the complement of the equality relation
  ($\neq$).  
  It is routine to verify that 
  \[ \qperp\omega = (\omega\loll{\neq}) = \neg\omega\rv
  = ({\neq}\llol\omega) = \qprep\omega \]
  holds for every $\omega : \proarrow:s->s;$.
  But, of course, symmetry can only occur when $\card{s}\leq1$. 
  (Here, $\omega\rv$ denotes the \emph{reverse} of $\omega$ 
  which is more commonly denoted $\omega\op$.)  

  More generally, $\two$-valued profunctors $\proarrow:\p->\p;$ 
  (where $\p=\pexplained$, some fixed but arbitrary poset) form a
  \cycstaut\ poset:  
  the tensor product is the usual composition of profunctors, 
  and the dualising object is the complement of the reverse ordering
  ($\ngeq$).  
  It is again routine to verify that 
  \[ \qperp\omega=(\omega\loll{\ngeq})=\neg\omega\rv
  =({\ngeq}\llol\omega)=\qprep\omega \]
  holds for every $\omega : \proarrow:\p->\p;$.
  Again, symmetry can only occur when $\card\p\leq1$. 
  (Observe that, in general, neither $\neg\omega$ nor $\omega\rv$ 
  is a profunctor $\proarrow:\p->\p;$, but that $\neg\omega\rv$ is.)  
\end{eg}

It is well-understood, at least in principle, that the term 
\emph{\cycstaut\ category} should mean a \staut\ category equipped
with a \cni\ $\arrow:\perp{p}->\prep{p};$. 
But this raises the question: what are the right coherence axioms?
This question is complicated by the fact that there is a second
approach to the phenomenon of cyclicity which does not explicitly
refer to dual objects.  
(This is not a new observation; on the contrary, the origin of the
term \emph{cyclic} is tied up in this approach---see again,
\cite{Yet'90}.) 

Since there are several equivalent definitions of \staut\ category, 
let use make clear that we use the one advocated in \cite{CocSee:ldc}: 
a linearly distributive category with chosen left and right duals for
every object.    
We generally use $\tens$ for \emph{tensor} and $\parr$ for \emph{par}; 
the linear distributions 
$\arrow: q \tens (s \parr t) -> (q \tens s) \parr t;$ and 
$\arrow: (p \parr q) \tens s -> p \parr (q \tens s);$ are 
denoted $\llcad$ and $\rrcad$, respectively.

\begin{rem} \label{rem:upperlowercase}
Let $\K=\Kexplained$ be a \staut\ category and let $\xhom[\K]st$
denote the {external} set of arrows $s \to t$;  
then {\ni}s of the form $\arrow:\perp{p}->\prep{p};$ 
are in bijective correspondence with those of the form  
$\arrow:\xhom[\K]{{p \tens t}}\d->\xhom[\K]{{t \tens p}}\d;$. 

We shall denote this correspondence (summarised below) by a change of
case: 
lower case for {\ni}s of the first form, 
upper case for those of the second.
\begin{diagram}@C=3pc{
  & t \ar[dl]_(.6){\lCurry\omega} \ar[dr]^(.6){\rCurry\psi} 
  && \ar@{}[dr]|-{\iff}
  && p \tens t \ar[d]^-{\omega} 
  \ar@{}[dr]|-{\stackrel{\Cycle{p,t}}\mapsto} 
  & t \tens p \ar[d]^-{\psi}
  \\ \perp{p} \ar[rr]_-{\cycle[p]}
  && \prep{p} 
  &&& \d 
  & \d }
\end{diagram}
\Thatis, given $\cycle$, we set
$\Cycle[p,t](\omega) = \invrCurry{\lCurry\omega\fo\cycle[p]}$; 
and, given $\Cycle$, we set 
$\cycle[p] = \rCurry{\Cycle[p,\perp{p}](\invlCurry{\id[\perp{p}])}}$.

Half of this correspondence can be easily depicted in terms of the
graphical calculus for \staut\ categories developed in \cite{BCST}. 
\begin{center}
  \begin{pspicture}[shift=-2](2,0)(14,12)
    \pspolygon[fillstyle=solid,fillcolor=obvcol](10,6)(10,12)(12,12)(12,6)
    \pspolygon[fillstyle=solid,fillcolor=obvcol](4,6)(4,12)(6,12)(6,6)
    \psframe[fillstyle=solid,fillcolor=boxcol](2,0)(14,6)
    \rput(8,3){\Large $\omega$}
    \uput[90](5,12){$\strut p$}
    \uput[90](11,12){$\strut t$}
  \end{pspicture}
  \qquad{\Large$\stackrel{\Cycle[p,t]}{\mapsto}$}\qquad
  \begin{pspicture}[shift=-11](-3,-9)(19,15)
    \pspolygon[fillstyle=solid,fillcolor=obvcol](10,6)(10,12)(12,12)(12,6)
    \uput[90](11,12){$\strut t$}
    \pscustom[fillstyle=solid,fillcolor=obvcol]{ 
      \psarc(2,6){2}{0}{180}
      \psline(0,6)(0,3)(-2,3)(-2,6)
      \psarcn(2,6){4}{180}{0}
    }
    \pscustom[fillstyle=solid,fillcolor=obvcol]{ 
      \psarc(8,3){10}{180}{0}
      \psline(18,3)(18,12)(16,12)(16,3)
      \psarcn(8,3){8}{0}{180}
    }
    \uput[90](17,12){$\strut p$}
    \psframe[fillstyle=solid,fillcolor=boxcol](2,0)(14,6)
    \rput(8,3){\Large $\omega$}
    \psframe[fillstyle=solid,fillcolor=boxcol](-2.5,1.5)(0.5,4.5)
    \rput(-1,3){\tiny $\strut \cycle[p]$}
  \end{pspicture} 
\end{center}
(Our reason for using \emph{ribbons/tape} rather than
\emph{strings/wires} shall soon become apparent.) 

Dually, there is also a bijective correspondence with natural
isomorphisms of the form  
$\arrow:\xhom[\K]\e{{t \parr p}}->\xhom[\K]\e{{p \parr t}};$; 
but we shall have no occasion to use this in the present paper.  
(It will, however, play a prominent role in \cite{EggMcC:cld}.)  
\end{rem}

So, a \cycstaut\ category could be defined either as a pair
$(\K,\cycle)$ with some coherence axioms for $\cycle$, or as a pair  
$(\K,\Cycle)$ with some coherence axioms for $\Cycle$.
Rosenthal \cite{KRo'94} takes the former approach; 
Blute, Lamarche and Ruet \cite{BluLamRue} the latter. 
In section \ref{sec:olds}, we show that these two definitions are
inequivalent;
we further introduce notions of \emph{{\ts}cyclicity} and
\emph{{\ps}cyclicity}, each of which lies between the
stronger notion of cyclicity (that of \cite{BluLamRue}) and the weaker
one (that of \cite{KRo'94}). 
Henceforth, we refer to the weaker notion as \emph{{\kq}cyclicity}, 
so as to reserve the term \emph{cyclicity} for the stronger notion 
(which is also the conjunction of {{\ts}cyclicity} and
{{\ps}cyclicity}).    

Recall that, if $\V$ is a complete and cocomplete, symmetric
and closed monoidal category, and $\c$ is a small $\V$-category, 
then the monoidal category $\C=\Prof[\V]{\c,\c}$ 
is also closed, but not (in general) symmetric\footnote{
Closedness follows from 
\cite{Day}, since it is possible to construct a 
\emph{promonoidal $\V$-category} $\c\op\newt\c$ such that 
\( \V^{\c\op\newt\c}\) and \(\Prof[\V]{\c,\c} \) are equivalent as
monoidal categories.}.
Rosenthal, doubtless inspired by Example \ref{eg:rel}, observed that:  
if $\V$ also admits a dualising object, then so does $\C$;
and, moreover, that the resultant \staut\ structure on $\C$ is always
{\kq}cyclic \cite{KRo'94}. 
We generalise these results in section \ref{sec:news}: 
to derive a \staut\ structure on $\C$ it suffices that $\V$ be
{\ts}cyclic (and $\C$ is also {\ts}cyclic in this case); 
moreover, if $\V$ is also {\kq}cyclic (and therefore cyclic), 
then the same is true of $\C$. 

The reader 
would be quickly forgiven for assuming that a \brdstaut\ category is
necessarily cyclic---after all, one would normally set up the
structure of such a category in such a way as to have 
$\perp{p}=\prep{p}$, 
and it is difficult at first to imagine that an identity {\nt} could
fail to be coherent.   
But the essential import of braidedness is that our graphical calculus 
should no longer be restricted to the plane; 
therefore, we should be able to pull $\omega$ over 
the $p$-ribbon in (our depiction of) $\Cycle[p,t](\omega)$ as follows.  
\begin{center}
  \begin{pspicture}[shift=-8](-3,-6)(18,12)
    \pspolygon[fillstyle=solid,fillcolor=obvcol](10,6)(10,12)(12,12)(12,6)
    \uput[90](11,12){$\strut t$}
    \pscustom[fillstyle=solid,fillcolor=obvcol]{ 
      \psline(0,3)(0,6)
      \psarcn(2,6){2}{180}{0}
      \psarc(2,6){4}{0}{180}
      \psline(-2,6)(-2,3)
    }
    \pscustom[fillstyle=solid,fillcolor=obvcol]{ 
      \psarc(8,3){10}{180}{0}
      \psline(18,3)(18,12)(16,12)(16,3)
      \psarcn(8,3){8}{0}{180}
    }
    \uput[90](17,12){$\strut p$}
    \psframe[fillstyle=solid,fillcolor=boxcol](2,0)(14,6)
    \rput(8,3){\Large $\omega$}
    \psframe[fillstyle=solid,fillcolor=boxcol](-2.5,1.5)(0.5,4.5)
    \rput(-1,3){\tiny $\strut \cycle[p]$}
  \end{pspicture}
  \qquad{\Large=}\qquad 
  \begin{pspicture}[shift=-12](-3,-10)(20,19)
    \pscustom[fillstyle=solid,fillcolor=obvcol]{ 
      \psline(0,3)(0,0)
      \psarc(2,0){2}{180}{0}
      \psline(4,0)(10,6)(10,10)(16,16)(18,16)(12,10)(12,6)(6,0)
      \psarcn(2,0){4}{0}{180}
      \psline(-2,0)(-2,3)
    }
    \pscustom[fillstyle=solid,fillcolor=obvcol]{ 
      \psline(0,3)(0,6)
      \psarcn(2,6){2}{180}{0}
      \psline(4,6)(10,0)(10,-4)(12,-4)(12,0)(6,6)
      \psarc(2,6){4}{0}{180}
      \psline(-2,6)(-2,3)
    }
    \psframe[fillstyle=solid,fillcolor=boxcol](-2.5,1.5)(0.5,4.5)
    \rput(-1,3){\tiny $\strut \cycle[p]$}
    \psframe[fillstyle=solid,fillcolor=boxcol](8,-10)(20,-4)
    \rput(14,-7){\Large $\omega$}
    \pspolygon[fillstyle=solid,fillcolor=obvcol](16,-4)(16,10)(10,16)(12,16)(18,10)(18,-4)
    \uput[90](11,16){$\strut t$}
    \uput[90](17,16){$\strut p$}
  \end{pspicture}
  \qquad{\Large$\stackrel?=$}\qquad 
  \begin{pspicture}[shift=-12](8,-10)(20,19)
    \pspolygon[fillstyle=solid,fillcolor=obvcol](10,10)(16,16)(18,16)(12,10)(11,7.5)
    \pspolygon[fillstyle=solid,fillcolor=revcol](10,1)(10,5)(11,7.5)(12,5)(12,1)(11,-1.5)
    \pspolygon[fillstyle=solid,fillcolor=obvcol](10,-4)(12,-4)(11,-1.5)
    \psdots[linecolor=white](11,7.5)(11,-1.5)
    \psline(10,10)(12,5)\psline(10,1)(12,-4)
    \psframe[fillstyle=solid,fillcolor=black](9.5,1.5)(12.5,4.5)
    \rput{180}(11,3){\tiny\white $\cycle[p]$}
    \psframe[fillstyle=solid,fillcolor=boxcol](8,-10)(20,-4)
    \rput(14,-7){\Large $\omega$}
    \pspolygon[fillstyle=solid,fillcolor=obvcol](16,-4)(16,10)(10,16)(12,16)(18,10)(18,-4)
    \uput[90](11,16){$\strut t$}
    \uput[90](17,16){$\strut p$}
  \end{pspicture}
\end{center}
The second (questionable) step is pulling the $p$-ribbon straight.  
(We naturally use \emph{reverse video} to depict the 
{opposite side} of a ribbon or box.)     
Even if $\cycle$ were chosen to be an identity, this step results 
in a $2\pi$-twist on the ribbon $p$---%
so it should not be too surprising to learn that one cannot have a 
cyclicity on a \brdstaut\ category unless it also
carries a \emph{balance} \cite{JoyStr:twists}. 
In fact, this relationship between cyclicity and balancedness is quite 
well-known among people who study
\emph{\boring}\ \staut\ categories
---see, for example, \cite{Yet'92,Mal}.
We show that this relationship does not depend on \boringness\
in section \ref{sec:more};
we further show that even {\kq}cyclicity is not guaranteed 
for a \brdstaut\ category 
by developing a corresponding notion of \emph{{\kq}balance}. 



If the reader wonders why $\cycle[p]$ appears so puny in the
figures above, it is because of the strictification result proven 
in section \ref{sec:zang}.
Let us say that an \arbstaut\ category has \emph{strict negations} 
if all of the de Morgan isomorphisms 
\begin{diagram}@R=1pc@C=2.5pc{
  \perp{p} \tens \perp{q} \ar[r]^-{} 
  & \perp{(q \parr p)} 
  & \perp{(p \tens q)} \ar[r]^-{} 
  & \perp{q} \parr \perp{p} 
  & \e \ar[r]^-{} 
  & \perp{\d} 
  & \perp{\e} \ar[r]^-{} 
  & \d 
  \\ \prep{p} \tens \prep{q} \ar[r]^-{} 
  & \prep{(q \parr p)} 
  & \prep{(p \tens q)} \ar[r]^-{} 
  & \prep{q} \parr \prep{p} 
  & \e \ar[r]^-{} 
  & \prep{\d} 
  & \prep{\e} \ar[r]^-{} 
  & \d }
\end{diagram}
(which are all denoted $\demorgan$) 
and the cancellation isomorphisms
\begin{diagram}{
  p \ar[r]^-{} 
  & \perp{(\prep{p})} 
  & p \ar[r]^-{} 
  & \prep{(\perp{p})} }
\end{diagram} 
(both denoted $\canon$)
are identities, 
and that a \cycstaut\ category has \emph{a strict negation} if, 
in addition, $\cycle$ is the identity \nt. 
Then every \staut\ category is equivalent 
(as a \ld\ category, and therefore also as a \staut\ category) 
to one with strict negations, and every \cycstaut\ category is
equivalent to one with a strict negation.  

We foreshadow the former result 
(which, obviously, does not depend on any of the results of
sections \ref{sec:olds}--\ref{sec:more})  
by suppressing the relevant isomorphisms 
in the graphical calculus for \arbstaut\ categories; 
in effect, we allow strings to be relabelled \scary{on the fly}.  
The latter result entails that the graphical calculus for \cycstaut\
categories should also suppress components of $\cycle$; 
in effect, it should be identical to that for \arbstaut\ categories,
except that a larger number of string relabellings are permitted. 

\section{Coherence axioms} \label{sec:olds}
Throughout this section: 
$\K=\Kexplained$ denotes a \staut\ category; 
$\cycle$, a natural isomorphism $\arrow:\perp{p}->\prep{p};$;
and $\Cycle$ the corresponding natural isomorphism  
$\arrow:\xhom[\K]{{p \tens t}}\d->\xhom[\K]{{t \tens p}}\d;$.
Figure \ref{fig:axioms} lists a number of possible coherence axioms
for $\cycle$; Figure \ref{fig:Axioms} does the same for $\Cycle$. 
(In the latter figure, $\lbind$ and $\rbind$ refer to the functions 
  \begin{diagram}@R=1pc@C=3pc{ 
    \xhom[\K]{(p \parr q)\tens(s \tens t)}\d 
    & \xhom[\K]{p \tens t}\d \times \xhom[\K]{q \tens s}\d 
    \ar[r]^-{}
    \ar[l]_-{}
    & \xhom[\K]{(p \tens q)\tens(s \parr t)}\d }
  \end{diagram}
  which map a pair of arrows $(\omega,\psi)$ 
  to the composites 
  \begin{diagram}@R=1pc@C=8pc{ 
    (p \parr q) \tens (s \tens t) \ar[d]_-{\assoc\inverse} 
      \ar@{}[dddddr]|-{\textrm{and}}
    & (p \tens q) \tens (s \parr t) \ar[d]^-{\assoc} 
    \\ ((p \parr q) \tens s) \tens t \ar[d]_-{\rrcad\tens\id} 
    & p \tens (q \tens (s \parr t)) \ar[d]^-{\id\tens\llcad} 
    \\ (p \parr (q \tens s)) \tens t \ar[d]_-{(\id\parr\psi)\tens\id} 
    & p \tens ((q \tens s) \parr t) \ar[d]^-{\id\tens(\psi\parr\id)}
    \\ (p \parr \d) \tens t \ar[d]_-{\rulaw\tens\id} 
    & p \tens (\d \parr t) \ar[d]^-{\id\tens\lulaw} 
    \\ p \tens t \ar[d]_-{\omega} 
    & p \tens t \ar[d]^-{\omega} 
    \\ \d 
    & \d }
  \end{diagram}
  respectively.) 

In \cite{KRo'94}, a cyclic \staut\ category is defined to be a pair 
$(\K,\cycle)$ such that $\cycle$ satisfies \kimmo.
Rosenthal proves that this axiom suffices to prove that,
for every pair of even integers $m$ and $n$ 
(and every pair of odd integers $m$ and $n$), 
there is a unique isomorphism $p^{*m} \to p^{*n}$ built up from the
components of $\cycle$.

However, in \cite{BluLamRue}, a cyclic \staut\ category is defined to be a
pair $(\K,\Cycle)$ such that $\Cycle$ satisfies \blrzero\ and \blrtwo.
Blute, Lamarche and Ruet prove: 
that \blrzero\ is equivalent to \tnul;  
that \kimmo\ is equivalent to \kprime;  
and, that the latter follows from \blrtwo.
They further conjecture that their definition is strictly stronger
than that of Rosenthal.

\begin{eg}
  Consider $(\Vecfd,\vnewt,k)$, where $k$ is a field not of
  characteristic two;   
  since $\vnewt$ is symmetric, we can (and do) choose to define 
  $\perp\blank$ and $\prep\blank$ so that $\perp{p}=\prep{p}$ for 
  all spaces $p$.  
  So each non-zero scalar determines a natural isomorphism 
  $\arrow:\perp{p}->\prep{p};$;  
  the latter satisfies \kimmo\ if and only if the scalar is $\pm1$, 
  and \tnul\ if and only if the scalar is 1.  
\end{eg}


We shall find it convenient to 
consider yet more possible (combinations of) axioms, as follows.

\begin{defs} \label{defs:main}
  We call $\cycle$: a
  \emph{\ps\cthing}, if it satisfies \pbin; 
  \emph{\kq\cthing}, if it satisfies \kimmo;
  \emph{\ts\cthing}, if it satisfies \tbin; 
  \emph{\cthing}, if it is both a \ts\cthing\ and a \ps\cthing.

  The pair $(\K,\cycle)$ is called a 
  \emph{(\ps-, \kq-, \ts-)cyclic \staut\ category} 
  whenever $\cycle$ is a (\ps-, \kq-, \ts-)cycle.  
\end{defs}

\begin{figure}
  \begin{diagram}@C=3pc@L=0pt{
      \perp\d \ar[rr]^-{\strut\cycle[\d]} \ar[dr]_-{\demorgan} 
      & \ar@{}[d]|(.4){\pnul}
      & \prep\d \ar[dl]^-{\demorgan} 
      & r \ar[dl]_-{\canon[r]} \ar@{}[d]|(.6){\kimmo} \ar[dr]^-{\canon[r]} 
      & \perp\e \ar[rr]^-{\strut\cycle[\e]} \ar[dr]_-{\demorgan} 
      & \ar@{}[d]|(.4){\tnul}
      & \prep\e \ar[dl]^-{\demorgan} 
      \\ & \e 
      & \perp{(\prep{r})} \ar[r]_-{\strut\perp{(\cycle[r])}}
      & \perp{(\perp{r})} \ar[r]_-{\strut\cycle[\perp{r}]} 
      & \prep{(\perp{r})} 
      & \d }
  \end{diagram}
  \begin{diagram}@C=2pc{
      \perp{(p \parr q)} \ar[rr]^-{\cycle[p \parr q]} \ar[d]_-{\demorgan} 
      &&  \prep{(p \parr q)} \ar[d]^-{\demorgan} \ar@{}[dll]|-{\pbin}
      &  \perp{(p \tens q)} \ar[rr]^-{\cycle[p \tens q]} \ar[d]_-{\demorgan} 
      &&  \prep{(p \tens q)} \ar[d]^-{\demorgan} \ar@{}[dll]|-{\tbin}
      \\ \perp{q} \tens \perp{p} \ar[rr]_-{\cycle[q] \tens \cycle[p]}
      &&  \prep{q} \tens \prep{p} 
      &  \perp{q} \parr \perp{p} \ar[rr]_-{\cycle[q] \parr \cycle[p]}
      &&  \prep{q} \parr \prep{p} }
  \end{diagram}
  \caption{Axioms for a $\cycle$} \label{fig:axioms}
\end{figure}

\begin{lem} \label{lem:mbm}
  The following dependencies exist between the axioms listed in
  Figure \ref{fig:axioms}.  
\[\begin{array}{rclcrcl}
    \tbin &\entails& \tnul &\hspace{2cm}& 
    \tbin &\entails& \evan{\pnul \iff \kimmo} \\
    \pbin &\entails& \pnul && 
    \pbin &\entails& \evan{\tnul \iff \kimmo} \\
    \kimmo &\entails& \evan{\tnul \iff \pnul} && 
    \kimmo &\entails& \evan{\tbin \iff \pbin} 
\end{array}\]
In particular, a \cthing\ is a \kq\cthing; 
moreover, 
each of the following four pairs of axioms is equivalent to
cyclicity. 
  \[ \classof{\pnul,\tbin} \qquad 
  \classof{\kimmo,\tbin} \qquad 
  \classof{\pbin,\kimmo} \qquad 
  \classof{\pbin,\tnul} \]
\end{lem}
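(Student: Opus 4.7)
The plan is to verify the six dependencies in the displayed array; the four characterisations of cyclicity then follow routinely. A key simplification is the \emph{palindromic} duality of \staut\ categories: passing to $\K\op$ swaps $\tens \leftrightarrow \parr$, $\e \leftrightarrow \d$, and $\perp\blank \leftrightarrow \prep\blank$, so that $\tbin \leftrightarrow \pbin$ and $\tnul \leftrightarrow \pnul$ while $\kimmo$ is self-dual. It therefore suffices to treat one member of each dual pair: namely, (I) $\tbin \Rightarrow \tnul$; (II) $\kimmo \Rightarrow (\tnul \iff \pnul)$; (III) $\tbin \Rightarrow (\pnul \iff \kimmo)$; (IV) $\kimmo \Rightarrow (\tbin \iff \pbin)$.

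\textbf{Implication (I).} Instantiate $\tbin$ at $p = q = \e$ and paste the resulting square with the naturality squares of $\cycle$ for $\lulaw : \e \tens \e \to \e$. The standard \staut\ coherence identifies the composite $\perp\e \xrightarrow{(\perp\lulaw)\inverse} \perp{(\e \tens \e)} \xrightarrow{\demorgan} \perp\e \parr \perp\e$ with the canonical unit isomorphism, and similarly for $\prep\blank$; the diagram then collapses to the triangle \tnul.

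\textbf{Implications (II), (III), (IV).} For (II), instantiate $\kimmo$ at $r = \d$: using $\perp\d \iso \e \iso \prep\d$ and the coherence between $\canon$ and $\demorgan$, the $\kimmo$-equation translates into a direct relation between $\cycle[\d]$ and $\cycle[\e]$ modulo structural isos. For (III), instantiate $\tbin$ at $p = \perp{r},\; q = r$, paste with the naturality of $\cycle$ along a canonical map $\perp{r} \tens r \to \d$, then eliminate $\cycle[\d]$ using (I) and collapse double-negations via $\canon$: the resulting identity equates the $r$-components of $\pnul$ and $\kimmo$. For (IV), apply $\prep\blank$ to the $\tbin$ square at $\perp{q}, \perp{p}$, use $\kimmo$ to collapse $\prep{\perp\blank}$ to the identity (modulo $\canon$), and rearrange via de Morgan coherence to obtain the $\pbin$ square at $p, q$.

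\textbf{Extraction and obstacle.} Each listed pair implies cyclicity: $\classof{\pnul, \tbin}$ yields $\kimmo$ by (III) and then $\pbin$ by (IV); $\classof{\kimmo, \tbin}$ yields $\pbin$ immediately by (IV); the pairs $\classof{\pbin, \kimmo}$ and $\classof{\pbin, \tnul}$ are their palindromic duals. Conversely, cyclicity implies each listed axiom (both nullary ones by (I), and $\kimmo$ by combining (I) with (III)), whence in particular every cycle is a \kq\cthing. The principal obstacle lies in the coherence bookkeeping behind (I) and (IV) -- the interaction of $\demorgan$ with unit isomorphisms, and of $\demorgan$ with $\canon$ -- but these are Mac~Lane-style coherences for \staut\ categories, and the graphical calculus promised in the introduction (vindicated by Section~\ref{sec:zang}) makes them essentially transparent.
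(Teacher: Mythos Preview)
Your overall strategy---duality reduction plus direct instantiation---is sound, and your treatment of (III) is essentially what the paper does (instantiate $\tbin$ at $p$ and $\perp{p}$, paste with naturality along the counit $\gamma$, and chase).  But there are two places where your route diverges from the paper's, and one of them hides a small gap.

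\emph{Implication (I).}  Your instantiation at $p=q=\e$ and pasting with naturality does not ``collapse to the triangle~$\tnul$'' directly: what it yields is that the endomorphism $\chi := \demorgan\fo\cycle[\e]\fo\demorgan\inverse$ of $\d$ satisfies $\chi = \chi\parr\chi$ under the unit identification $\d\parr\d\iso\d$, hence $\chi^2=\chi$; you then need the (trivial) observation that $\chi$ is invertible to conclude $\chi=\id$.  This is precisely the Eckmann--Hilton argument, and the paper isolates it as a general lemma (Lemma~\ref{appx:snis}): a \emph{semigroupal} natural isomorphism between strong monoidal functors is automatically monoidal.  Your sketch is on the right track but omits this idempotency step.

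\emph{Implications (II) and (IV).}  Here your approach is genuinely different from the paper's.  You work entirely with the lower-case $\cycle$ and attempt direct diagrammatic derivations, which is feasible but (as you acknowledge) heavy on de~Morgan/$\canon$ bookkeeping---especially (IV), where applying $\prep\blank$ to the $\tbin$ square and unwinding via $\kimmo$ produces a tangle of double-negation collapses that you do not actually carry out.  The paper instead passes to the upper-case formulation and defers the entire third row to Lemma~\ref{lem:jme}: once one knows $\tbin\iff\varmtwo$ and $\pbin\iff\mtwo$ (and $\tnul\iff\blrzero$, $\pnul\iff\mzero$), the implications $\kimmo\entails(\tbin\iff\pbin)$ and $\kimmo\entails(\tnul\iff\pnul)$ become trivial, since $\kimmo\iff\kprime$ says $\Cycle$ is self-inverse and the upper-case axioms are paired as axiom-for-$\Cycle$ versus axiom-for-$\Cycle\inverse$.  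This route trades your coherence chase for the (independently useful) translation lemma, and is considerably cleaner.
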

\begin{proof}
  We discuss only the first row of assertions: 
  the second row of assertions are dual; 
  and, although the third row of assertions 
  (which are, in any case, much less surprising) 
  can be proven directly, it is simpler to see them as a 
  corollary of Lemma \ref{lem:jme} and its dual.  

That $\tbin\entails\tnul$ follows from a more general fact---namely, 
that a \scary{semigroupal} natural isomorphism between strong monoidal
functors is necessarily monoidal
(see Lemma \ref{appx:snis}). 
It should be clear that 
\tbin\ asserts that $\cycle$ is semigroupal, and that $\tbin$ together with $\tnul$ asserts that $\cycle$ is monoidal,
\wrt\ strong monoidal functors
$(\K,{\tens},\e)\to(\K,{\parr},\d)$ overlying $\perp\blank$ and
$\prep\blank$.)

To prove $\tbin\entails\evan{\pnul\iff\kimmo}$, 
it will be convenient to derive a form of $\tbin$
which does not explicitly refer to $\parr$.
This is achieved by applying the natural isomorphisms 
$(x \loll z) \isoto \perp{x}\parr{z}$ and 
$(z \llol y) \isoto {z}\parr\prep{y}$, as follows.

\begin{diagram}@C=0pc{
    \perp{(p \tens q)} 
    \ar[rr]_-{\demorgan} \ar[dr]|-{\sim}
    \fixwonky{u}{rrrrrrrr}{^-{\cycle[p \tens q]}}
    && \perp{q} \parr \perp{p} 
    \ar[rr]_-{\id[q] \parr \cycle[p]} \ar[dl]|-{\sim}
    && \perp{q} \parr \prep{p} 
    \ar[rr]_-{\cycle[q] \parr \id[p]} \ar[dl]|-{\sim} \ar[dr]|-{\sim}
    && \prep{q} \parr \prep{p} 
    \ar[rr]_-{\demorgan} \ar[dr]|-{\sim}
    && \prep{(p \tens q)} 
    \\ & q \loll (\perp{p}) \ar[rr]_-{\id[q]\loll\cycle[p]} 
    &&  q \loll (\prep{p}) \ar[rr]_-{\sim} 
    && (\perp{q}) \llol p \ar[rr]_-{\cycle[q]\llol\id[p]}
    && (\prep{q}) \llol p \ar[ur]|-{\sim} }
\end{diagram}

The outer hexagon of the diagram above forms the central cell in the
diagram below,  
(But we have replaced $\psi\loll\omega$ and $\omega\llol\psi$
by their more colloquial forms:
$\psi\of\blank\of\omega$ and $\omega\fo\blank\fo\psi$, respectively.)  
The two cells labelled \naty\ are naturality squares, 
and all the unlabelled cells (including the outermost square)
are 
tautologies
that hold in arbitrary \staut\ categories.  

\begin{diagram}@C=4pc{ 
    \e \ar@{=}[rr] \ar@{=}[ddd]
    && \e \ar[dr]^-{\demorgan} \ar[dl]_-{\demorgan} \ar@{=}[rr]
    \ar@{}[d]|-{\qpnul}   
    && \e \ar[ddd]^{\name{\id[p]}} 
    \\ & \perp\d \ar[rr]|-{\cycle[\d]} \ar[d]_-{\perp\gamma} 
    & \ar@{}[d]|-{\naty}  
    & \prep\d \ar[d]^-{\prep\gamma} 
    \\ & \perp{(p\tens\perp{p})} \ar[rr]|-{\cycle[p\tens\perp{p}]} 
    \ar[d]_-{\sim} \ar@{}[ddrr]|-{\vartbin}
    && \prep{(p\tens\perp{p})} & 
    \\ \e \ar[dr]|-{\name{\cycle[p]}}
    \ar[d]_-{\name{\id[\prep{p}]}} 
    \ar[r]^-{\name{\id[\perp{p}]}}
    & \perp{p}\loll\perp{p} 
    \ar[d]^-{\blank\fo\cycle[p]} 
    && \prep{(\perp{p})}\llol p \ar[u]_-{\sim} 
    & p \llol p \ar[l]_-{\canon[p]\of\blank} 
    \ar[d]^-{\canon[p]\of\blank} \ar@{}[dl]|-{\qkimmo}
    \\ \prep{p}\loll\prep{p} 
    \ar[r]_-{\cycle[p]\fo\blank}
    \fixwonky{d}{rrrr}{_-{\sim}}
    & \perp{p}\loll\prep{p} \ar[rr]^-{\sim}_-{\naty}
    && \perp{(\perp{p})}\llol p 
    \ar[u]^-{\cycle[\perp{p}]\of\blank} 
    & \perp{(\prep{p})}\llol p 
    \ar[l]^-{\perp{(\cycle[p])}\of\blank} 
  }   
\end{diagram}

The cell labelled $\qpnul$ equals $\pnul$.
If it holds, then we conclude that 
\[ \name{\canon[p]}=\canon[p]\of(\name{\id[p]})
   =\cycle[\perp{p}]\of(\perp{(\cycle[p])}\of(\canon[p]\of(\name{\id[p]})))
   =\name{\cycle[\perp{p}]\of\perp{(\cycle[p])}\of\canon[p]} \]
---and this is tautologously equivalent to \kimmo.

Conversely, if \kimmo\ holds, then so does $\qkimmo$; 
if we choose $p$ to be either $\d$ or $\e$, then all the arrows 
(including the counit $\gamma$) are invertible, 
and this allows us to conclude that $\qpnul$ (which is $\pnul$) holds. 
\end{proof}

The main result of this section is that our definition of cyclic 
\staut\ category agrees with that of \cite{BluLamRue}; 
this is an immediate corollary of the following lemma.  


\begin{figure}
  \begin{diagram}@C=4pc{ 
      \xhom[\K]{(p \tens q) \tens t}\d 
      \ar[d]_-{\assoc[p,q,t]\inverse\fo\blank} 
      \ar[r]^-{\Cycle[p \tens q,t]} 
      \ar@{}[ddr]|-{\blrtwo}
      & \xhom[\K]{t \tens (p \tens q)}\d 
      \ar[d]^-{\assoc[t,p,q]\fo\blank} 
      & \xhom[\K]{t \tens p}\d 
      \ar[dd]^-{\Cycle[t,p]=\Cycle[p,t]\inverse}_-{\kprime}
      \\ \xhom[\K]{p \tens (q \tens t)}\d 
      \ar[d]_-{\Cycle[p,q \tens t]} 
      & \xhom[\K]{(t \tens p) \tens q}\d 
      \ar[d]^-{\Cycle[t \tens p,q]}
      \\ \xhom[\K]{(q \tens t) \tens p}\d 
      \ar[r]_-{\assoc[q,t,p]\inverse\fo\blank} 
      & \xhom[\K]{q \tens (t \tens p)}\d 
      & \xhom[\K]{p \tens t}\d }
  \end{diagram}
  \begin{diagram}@C=4pc{ 
      \xhom[\K]{(p \tens q) \tens t}\d 
      \ar[d]_-{\assoc[p,q,t]\inverse\fo\blank} 
      \ar[r]^-{\Cycle[p \tens q,t]} 
      \ar@{}[ddr]|-{\etwo}
      & \xhom[\K]{t \tens (p \tens q)}\d 
      & \xhom[\K]{\e \tens t}\d 
      \ar[dd]_-{\Cycle[\e,t]}
      \ar[dr]^-{\lulaw[t]\inverse\fo\blank}
      \\ \xhom[\K]{p \tens (q \tens t)}\d 
      \ar[d]_-{\Cycle[p,q \tens t]} 
      & \xhom[\K]{(t \tens p) \tens q}\d 
      \ar[u]_-{\assoc[t,p,q]\inverse\fo\blank} 
      && \xhom[\K]t\d \ar[dl]^-{\rulaw[t]\fo\blank}          
      \ar@{}[l]|(.67){\blrzero}
      \\ \xhom[\K]{(q \tens t) \tens p}\d 
      \ar[r]_-{\assoc[q,t,p]\inverse\fo\blank} 
      & \xhom[\K]{q \tens (t \tens p)}\d \ar[u]_-{\Cycle[q,t \tens p]}
      & \xhom[\K]{t \tens \e}\d }
  \end{diagram}
  \begin{diagram}@C=4pc{ 
      \xhom[\K]{p \tens (s \tens t)}\d 
      \ar[r]^-{\Cycle[p,s \tens t]} 
      \ar[d]_-{\assoc[p,s,t]\fo\blank} 
      \ar@{}[ddr]|-{\eagain}
      & \xhom[\K]{(s \tens t) \tens p}\d 
      & \xhom[\K]{t \tens \e}\d \ar[dd]_-{\Cycle[t,\e]}
      \ar[dr]^-{\rulaw[t]\inverse\fo\blank}
      \\ \xhom[\K]{(p \tens s) \tens t}\d 
      \ar[d]_-{\Cycle[p \tens s,t]}
      & \xhom[\K]{s \tens (t \tens p)}\d 
      \ar[u]_-{\assoc[s,t,p]\fo\blank} 
      && \xhom[\K]t\d \ar[dl]^-{\lulaw[t]\fo\blank} 
      \ar@{}[l]|(.67){\mzero}      
      \\ \xhom[\K]{t \tens (p \tens s)}\d 
      \ar[r]_-{\assoc[t,p,s]\fo\blank} 
      & \xhom[\K]{(t \tens p) \tens s}\d 
      \ar[u]_-{\Cycle[t \tens p,s]} 
      & \xhom[\K]{\e \tens t}\d 
      }
  \end{diagram}
  \begin{diagram}{
      \xhom[\K]{p \tens t}\d \times \xhom[\K]{q \tens s}\d 
      \ar[r]^-{\Cycle[p,t]\times\Cycle[q,s]}
      \ar[dd]_-{\lbind}
      & \xhom[\K]{t \tens p}\d \times \xhom[\K]{s \tens q}\d 
      \ar[d]^-{\sim}
      \\ \ar@{}[r]|-{\mtwo}
      & \xhom[\K]{s \tens q}\d \times \xhom[\K]{t \tens p}\d 
      \ar[d]^-{\rbind}
      \\ \xhom[\K]{(p \parr q)\tens(s \tens t)}\d 
      \ar[r]_-{\Cycle[p \parr q,s \tens t]}
      & \xhom[\K]{(s \tens t)\tens(p \parr q)}\d }
  \end{diagram}
  \begin{diagram}{
      \xhom[\K]{p \tens t}\d \times \xhom[\K]{q \tens s}\d 
      \ar[r]^-{\Cycle[p,t]\times\Cycle[q,s]}
      \ar[dd]_-{\rbind}
      & \xhom[\K]{t \tens p}\d \times \xhom[\K]{s \tens q}\d 
      \ar[d]^-{\sim}
      \\ \ar@{}[r]|-{\varmtwo}
      & \xhom[\K]{s \tens q}\d \times \xhom[\K]{t \tens p}\d 
      \ar[d]^-{\lbind}
      \\ \xhom[\K]{(p \tens q)\tens(s \parr t)}\d 
      \ar[r]_-{\Cycle[p \tens q,s \parr t]}
      & \xhom[\K]{(s \parr t)\tens(p \tens q)}\d }
  \end{diagram}
  \caption{Axioms for a $\Cycle$}\label{fig:Axioms}
\end{figure}

\begin{lem} \label{lem:jme}
  The following axioms are equivalent: \tbin, \etwo, \varmtwo. 
  Moreover, 
  $\blrtwo$ is equivalent to ${\kprime\meet\etwo}$. 
\end{lem}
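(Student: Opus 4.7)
The strategy is to use the correspondence $\cycle \leftrightarrow \Cycle$ of Remark~\ref{rem:upperlowercase} to transport each $\Cycle$-axiom into an equivalent statement about $\cycle$, and then verify that both $\etwo$ and $\varmtwo$ translate to $\tbin$. The key computational tool is the formula $\Cycle[p,t](\omega) = \invrCurry{\lCurry\omega \fo \cycle[p]}$, which reduces checks about $\Cycle$ to calculations with $\cycle$, mediated by the naturality of $\lCurry$ and $\rCurry$ with respect to the associators.

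For $\tbin \iff \etwo$: given an arbitrary $\omega : (p \tens q) \tens t \to \d$, I would unpack both paths of the $\etwo$-hexagon as morphisms $q \tens (t \tens p) \to \d$, expressing each $\Cycle$-step via the formula above. The right-hand path of the hexagon factors through $\cycle[p \tens q]$ applied to $\lCurry\omega$. Iterating the formula along the left-hand path and rearranging associators using naturality shows that this path factors instead through the composite $\demorgan\inverse \fo (\cycle[q] \parr \cycle[p]) \fo \demorgan$ applied to the same $\lCurry\omega$. Since $\lCurry$ is a bijection and $\omega$ is arbitrary, the equality of these two factorizations is precisely $\tbin$. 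The equivalence $\tbin \iff \varmtwo$ proceeds in the same spirit: unpacking $\rbind(\omega,\psi)$ using its definition in the paper, the $\rbind$-path of the $\varmtwo$-square factors through $\cycle[p \tens q]$, while the $\lbind$-path (after the indicated swap and the pair $\Cycle[p,t] \times \Cycle[q,s]$) factors through $\cycle[q] \parr \cycle[p]$ conjugated by the appropriate $\demorgan$-isomorphisms, so the two agree exactly when $\tbin$ holds.

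For $\blrtwo \iff \kprime \meet \etwo$: the direction $(\Leftarrow)$ is immediate. Under $\kprime$, the upward arrow $\Cycle[q, t \tens p]$ in the $\etwo$-hexagon coincides with the downward arrow $\Cycle[t \tens p, q]\inverse$ in the $\blrtwo$-hexagon, so the two commutativity statements become interchangeable. For $(\Rightarrow)$, the implication $\blrtwo \implies \kprime$ is already established in \cite{BluLamRue} (and cited above in this section); once $\kprime$ is available, the same identification converts $\blrtwo$ back into $\etwo$.

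The main obstacle lies in the adjunction-chase proving $\tbin \iff \etwo$: several naturality squares for $\lCurry$, $\rCurry$ and the associators must be composed in the right order, and one must carefully track how the two successive $\Cycle$-steps on the left-hand path of $\etwo$ combine to reproduce precisely the $\demorgan$-factorization of $\cycle[p \tens q]$ appearing in $\tbin$. Once one grants that the linear-distributive adjunctions are natural in all variables, the argument proceeds mechanically.
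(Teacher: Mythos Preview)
Your treatment of $\tbin\iff\etwo$ and of the final clause $\blrtwo\iff\kprime\meet\etwo$ is essentially the paper's argument: the paper also unpacks $\etwo$ via the formula for $\Cycle$ and the $\parr$-free form of $\tbin$ (derived in the proof of Lemma~\ref{lem:mbm}), and it handles $\blrtwo$ exactly as you do.

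Where you diverge is in the link to $\varmtwo$. The paper does not attempt a direct $\tbin\iff\varmtwo$; instead it closes the cycle via $\etwo\Rightarrow\varmtwo$ and $\varmtwo\Rightarrow\tbin$. For $\etwo\Rightarrow\varmtwo$ it chases a pair $(\omega,\psi)$ through an explicit string of equalities, the non-trivial step being a purely linearly-distributive identity (Lemma~\ref{appx:base}) relating two ways of plugging $\psi$ and a map $t\tens p\to\d$ together. For $\varmtwo\Rightarrow\tbin$ the paper \emph{specialises}: taking $\omega=\gamma_p$ and $\psi=\gamma_q$, and using that $\rbind(\gamma_p,\gamma_q)\iso\gamma_{p\tens q}$ via the de~Morgan isomorphism, collapses $\varmtwo$ to $\tbin$.

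Your sketch hides both of these ingredients. The assertion that the $\lbind$-path of $\varmtwo$ ``factors through $\cycle[q]\parr\cycle[p]$'' in a way directly comparable to how the $\rbind$-path factors through $\cycle[p\tens q]$ is exactly the content of Lemma~\ref{appx:base}; without it there is no common ``pre-$\cycle$'' stage through which both sides pass. And even once you have such a common factorisation, the conclusion ``so the two agree exactly when $\tbin$ holds'' does not follow for the backward implication: $\rbind$ is not surjective onto $\xhom[\K]{(p\tens q)\tens(s\parr t)}{\d}$, so knowing the two paths agree for all inputs of the form $\rbind(\omega,\psi)$ does not immediately force the two $\cycle$-expressions to be equal. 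One must exhibit a specific pair $(\omega,\psi)$ for which the factorisation is injective in the $\cycle$-slot; the paper's choice of the counits $(\gamma_p,\gamma_q)$ does precisely this. Add these two points and your outline becomes a proof equivalent to the paper's.
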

\begin{proof}
Using the $\parr$-free version of \tbin\ derived in the proof of
Lemma \ref{lem:mbm} above, we 
{chase the two diagrams} simultaneously: 
given $\omega\in\xhom[\K]{(p \tens q) \tens t}\d$, 
\begin{diagram}@C=2pt{ 
  &&&&& t \ar@/_2ex/[dlllll]+U_-{\lCurry\omega} 
  \ar@{.>}@/_/[dlll]+U \ar@{.>}[dl]+U \ar@{.>}[dr]+U \ar@{.>}@/^/[drrr]+U
  \ar@{.>}@/^2ex/[drrrrr]+U
  \\ \perp{(p \tens q)} \ar[rr]_-{\sim} 
  && q \loll (\perp{p}) \ar[rr]_-{\strut\id[q]\loll\cycle[p]} 
  &&  q \loll (\prep{p}) \ar[rr]_-{\sim} 
  && (\perp{q}) \llol p \ar[rr]_-{\strut\cycle[q]\llol\id[p]}
  &&  (\prep{q}) \llol p  \ar[rr]_-{\sim}
  && \prep{(p \tens q)} 
  \\ (p \tens q) \tens t \ar[d]_-{\omega} 
     \ar@{}[drr]|-{\stackrel{\assoc[p,q,t]\inverse\fo\blank}\mapsto}
  && p \tens (q \tens t) \ar[d]^-{}
     \ar@{}[drr]|-{\stackrel{\Cycle[p,q \tens t]}\mapsto}
  && (q \tens t) \tens p \ar[d]^-{} 
     \ar@{}[drr]|-{\stackrel{\assoc[q,t,p]\inverse\fo\blank}\mapsto}
  && q \tens (t \tens p) \ar[d]^-{} 
     \ar@{}[drr]|-{\stackrel{\Cycle[q,t \tens p]}\mapsto}
  && (t \tens p) \tens q \ar[d]^-{} 
     \ar@{}[drr]|-{\stackrel{\assoc[t,p,q]\inverse\fo\blank}\mapsto}
  && t \tens (p \tens q) \ar[d]^-{} 
  \\ \d 
  && \d
  && \d 
  && \d
  && \d
  && \d }
\end{diagram}  
should equal 
\begin{diagram}{ 
  & t \ar[dl]+U_-{\lCurry\omega} \ar@{.>}[dr]+U 
  \\ \perp{(p \tens q)} \ar[rr]_-{\cycle[p \tens q]} 
  && \prep{(p \tens q)} 
  \\ (p \tens q) \tens t \ar[d]_-{\omega} 
     \ar@{}[drr]|-{\stackrel{\Cycle[p \tens q,t]}\mapsto}
  && t \tens (p \tens q) \ar[d]^-{} 
  \\ \d 
  && \d }
\end{diagram}
respectively. 

Now, if \etwo\ holds, then \varmtwo\ 
can be verified by chasing a pair $(\omega,\psi)$ through the
diagram, as follows.  
\begin{eqnarray} 
\lefteqn{\Cycle[p \tens q,s \parr t](\rbind(\omega,\psi))} \nonumber
\\ &=& 
{\assoc[s \parr t,p,q]\inverse\fo\Cycle[q,(s \parr t) \tens
      p](\assoc[q,s \parr t,p]\inverse\fo\Cycle[p,q \tens (s \parr
      t)](\assoc[p,q,s \parr t]\inverse\fo\rbind(\omega,\psi)))}
\\ &=& \assoc[s \parr t,p,q]\inverse\fo\Cycle[q,(s \parr t) \tens
      p](\assoc[q,s \parr t,p]\inverse\fo\Cycle[p,q \tens (s \parr
      t)](\id[p]\tens(\llcad[q,s,t]\fo\psi\parr\id[t]\fo\lulaw[t])
      \fo\omega))
\\ &=& \assoc[s \parr t,p,q]\inverse\fo\Cycle[q,(s \parr t) \tens
      p](\assoc[q,s \parr t,p]\inverse\fo
      (\llcad[q,s,t]\fo\psi\parr\id[t]\fo\lulaw[t])\tens\id[p]\fo
      \Cycle[p,t](\omega))
\\ &=& \assoc[s \parr t,p,q]\inverse\fo\Cycle[q,(s \parr t) \tens
      p](\id[q]\tens(\rrcad[s,t,p]\fo\id[s]\parr\Cycle[p,t](\omega)
      \fo\rulaw[s])\fo\psi) 
\\ &=& \assoc[s \parr t,p,q]\inverse\fo
   (\rrcad[s,t,p]\fo\id[s]\parr\Cycle[p,t](\omega)\fo\rulaw[s])
   \tens\id[q]\fo\Cycle[q,s](\psi) 
\\ &=& \lbind(\Cycle[q,s](\psi),\Cycle[p,t](\omega))
\end{eqnarray}
(Equation (1) applies \etwo; 
Equations (2) and (6), the definitions of $\rbind$ and $\lbind$,  
respectively;
Equations (3) and (5), the naturality of $\Cycle$.  
Equation (4) is a simple exercise in \ld\ category theory---see
Lemma \ref{appx:base}.)

Conversely, we can derive \tbin\ from \varmtwo, by applying the latter
to the pair $\omega=\gamma_p=\invlCurry{\id[\perp{p}]}$ and 
$\psi=\gamma_q=\invlCurry{\id[\perp{q}]}$. 
(Note that $\rbind(\gamma_p,\gamma_q)\iso\gamma_{p \tens q}$, 
as arrows, 
via the de Morgan isomorphism 
$\demorgan : \arrow:\perp{(p \tens q)}->\perp{q}\parr\perp{p};$.)

Finally, it is clear that $\evan{\kprime\meet\etwo}\entails\blrtwo$ and 
$\evan{\blrtwo\meet\kprime}\entails\etwo$.
But since $\blrtwo\entails\kprime$ the latter can be sharpened to 
$\blrtwo\entails\evan{\etwo\meet\kprime}$. 
\end{proof}

For the sake of completeness, we note without proof that 
the axioms \pnul\ and \mzero\ are equivalent, 
as are \pbin, \mtwo\ and \eagain. 
Note that \mzero\ is \blrzero-for-$\Cycle\inverse$, 
\eagain\ {is} \etwo-for-$\Cycle\inverse$,  
and \varmtwo\ is \mtwo-for-$\Cycle\inverse$.
Hence, $\kprime\entails\evan{\mzero\iff\blrzero}$ and 
$\kprime\entails\evan{\mtwo\iff\varmtwo}$ are trivial.  

\section{Enriched profunctors and cyclicity} \label{sec:news}
Throughout this section: $\V=\Vexplained$ denotes a {\ts}cyclic 
\staut\ category; and, when we speak of $\V$-categories
($\V$-profunctors, \etc), then we mean $\tV$-categories 
(resp., $\tV$-profunctors, \etc).

\begin{thm} \label{thm:main}
  Let $\c$ be a small $\V$-category; 
  then $\Prof[\V]{\c,\c}$ is {\ts}cyclic \staut. 
  Moreover, if $\V$ is also {\kq}cyclic (and therefore cyclic), 
  then the same is true of $\Prof[\V]{\c,\c}$. 
\end{thm}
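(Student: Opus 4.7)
The plan is to build the \staut\ structure on $\C := \Prof[\V]{\c,\c}$ explicitly using the end/coend calculus, and then transport the (semi-)cyclic structure of $\V$ to $\C$ pointwise, on integrands. First I would fix formulas: the tensor of two profunctors $P,Q$ is the familiar coend $(P \tens Q)(x,y) = \int^{z} P(z,y) \tens Q(x,z)$, the monoidal unit is the hom profunctor $E(x,y) := \xhom[\c]{x}{y}$, and the dualising profunctor $D$ is the one obtained from $\d$ through the promonoidal structure on $\c\op \tens \c$ alluded to in the introduction. Using the coend Yoneda lemma and the induced closed structure, the two duals $\perp{P}$ and $\prep{P}$ admit explicit end formulas whose integrands involve $\perp{(P(\blank,\blank))}$ and $\prep{(P(\blank,\blank))}$, respectively.

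Second, I would define the candidate cyclicity isomorphism $\cycle[P] : \perp{P} \to \prep{P}$ objectwise, by inserting $\cycle$ of $\V$ into each integrand at every $z$. The key verification is that the resulting family of $\V$-arrows is natural in the end variable $z$, so that it descends to an arrow between the two ends. This naturality is precisely the coherence needed for $\cycle$ of $\V$ to commute with the tensor appearing in the integrand that encodes composition in $\c$, and by Lemma \ref{lem:jme} this coherence is exactly axiom \tbin\ (equivalently \etwo\ or \varmtwo); this is where the {\ts}cyclicity of $\V$ is used, and nowhere else in the construction.

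Third, I would verify axiom \tbin\ for $\C$ itself. Since $\perp{(P \tens Q)}$, $\perp{Q} \parr \perp{P}$, the de Morgan isomorphism $\demorgan$ between them, and $\cycle[P] \parr \cycle[Q]$ all unfold into end expressions whose integrands are built from the corresponding data in $\V$, the axiom for $\C$ reduces, via the universal property of ends and coends, to the pointwise instance of \tbin\ in $\V$. For the second assertion, if $\V$ additionally satisfies \kimmo, then an analogous reduction---carefully tracking $\canon[P]$, $\perp{(\cycle[P])}$, and $\cycle[\perp{P}]$ as their pointwise end representatives---establishes \kimmo\ for $\C$.

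The main obstacle is not the definition or the coherence verifications per se but the opening bookkeeping: because $\V$ is not assumed braided, every end/coend Fubini swap and every de Morgan isomorphism used to derive the explicit formulas for $\perp{P}$ and $\prep{P}$ must be checked to rely only on structure genuinely available in a {\ts}cyclic \staut\ category. Once this pointwise dictionary between $\V$ and $\C$ has been laid down correctly, each coherence axiom for $\C$ becomes a pointwise transcription of the corresponding axiom for $\V$, and both halves of the theorem follow.
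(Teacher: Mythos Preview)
Your outline leans on structure that is not available under the hypotheses. You propose to obtain $\perp{P}$ and $\prep{P}$ from ``the induced closed structure'' on $\C$, and to read off the dualising profunctor $D$ from ``the promonoidal structure on $\c\op \newt \c$''. But when $\V$ is merely {\ts}cyclic (in particular, not braided), neither $\c\op$ nor the product of $\V$-categories exists---this is exactly the content of the Remark preceding the proof---and the paper explicitly warns that it is not clear $\Prof[\V]{\c,\c}$ is closed at all in this generality. So the end formulas you are counting on cannot be derived the way you suggest; your last paragraph correctly flags that such bookkeeping must be checked, but the check fails rather than succeeding.

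The paper's proof avoids this by constructing the duals directly: one sets $\xhom[\perp{f}]{q}{r} := \perp{(\xhom[f]{r}{q})}$ and $\xhom[\prep{f}]{q}{r} := \prep{(\xhom[f]{r}{q})}$, with no end in sight. The subtle point is then not the well-definedness of $\widetilde\cycle_f$ (that modulation is declared pointwise and is ``essentially tautologous''), but rather the construction of the full bimodule structure on $\perp{f}$. One of the two actions on $\perp{f}$ comes for free via contraposition; the other must be manufactured using $\cycle$, and the paper gives two recipes---one makes left/right associativity trivial but middle associativity hard, the other vice versa. Showing that the two recipes coincide is precisely where \tbin\ enters (this is the appendix lemma). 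So you have also mislocated the role of \tbin: it is needed to make $\perp{f}$ a profunctor at all, not to make a pointwise $\cycle$ descend through an end. Once the duals are built this way, the rest (par via ends, linear distributions, units/counits, and the pointwise inheritance of \tbin\ and \kimmo) does proceed much as your final paragraphs envisage.
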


Before proceeding with the proof, 
we discuss a few of the issues that arise in the consideration of  
non-symmetric $\V$.    

\begin{rem} \label{rem:varia}
Given an arbitrary (\ts)cyclic \staut\ category $\V$, 
it is impossible to define the product $\a\newt\b$ of
$\V$-categories $\a$ and $\b$; 
this requires at least a braiding on $\V$.  
Similarly, it is impossible to define the opposite $\c\op$ of a
$\V$-category $\c$; 
this requires at least 
a braiding or an \emph{involution} in the sense of
\cite{Egg:imc}.  

Thus, the notion of $\V$-profunctor $\proarrow:\a->\b;$ must be defined 
in more elementary terms than the customary 
\scary{$\V$-functor $\a\op\newt\b\to\V$}. 
This is done in \cite{Benabou}, and also in \cite{Str'83}: 
very simply, a $\V$-profunctor $f : \proarrow:\a->\b;$ is an 
$(\ob\a\times\ob\b)$-indexed family of $\V$-objects, $\xhom[f]qr$, 
together with 
$\V$-arrows 
\begin{diagram}@C=4pc{
  \xhom[\a]pq\tens\xhom[f]qr \ar[r]^-{\lact[p,q,r]} & \xhom[f]pr 
  & \xhom[f]qr\tens\xhom[\b]rs \ar[r]^-{\ract[q,r,s]} & \xhom[f]qs } 
\end{diagram}
(for all $p,q\in \ob\a$ and $r,s\in \ob\b$) 
satisfying the five obvious associativity and unitality axioms.

[It is perhaps helpful to imagine $\xhom[f]qr$ as consisting of 
  \emph{oblique} arrows $q \to r$, and $\lact[p,q,r]$ and
  $\ract[q,r,s]$ as performing the composition of these with 
  \emph{genuine} arrows $p \to q$ and $r \to s$ in $\a$ and $\b$
  respectively. 
  For example, the identity profunctor on $\c$ is obtained by
  regarding (the object of) genuine arrows in $\c$ as (an object of) 
  oblique arrows.]   

A \emph{modulation} of $\V$-profunctors $\omega : f\To g$ is a
family of $\V$-arrows $\xhom[\omega]qr : \arrow:\xhom[f]qr->\xhom[g]qr;$ 
which are suitably compatible with the multiplicative structure of $f$
and $g$.   
(We borrow the term modulation from \cite{CKSW}.)

Composition of $\V$-profunctors is a routine application of
\emph{coends}: 
given profunctors $f : \proarrow:\a->\b;$ and $g : \proarrow:\b->\c;$, 
we take the family 
\[ \xhom[f \tens g]qs := \int^r \xhom[f]qr\tens\xhom[g]rs, \]
together with a left $\a$-action derived from that of $f$, 
and a right $\c$-action derived from that of $g$;
see \cite{Benabou} or \cite{Str'83} for details.  

Using these more elementary definitions, it is not clear that 
$\Prof[\V]{\c,\c}$ should be closed;
indeed, it appears to us to be untrue in full generality.  
Certainly, we have (so far) been unable to deduce a \staut\ structure
on $\Prof[\V]{\c,\c}$ when $\V$ is an arbitrary \staut\ category: 
it seems that the hypothesis of {\ts\cprop} cannot be weakened further. 
This asymmetry (that {\ts\cprop} is essential and {\ps\cprop} optional)
stems partly from the asymmetry contained in the very definitions of
$\V$-category and $\V$-profunctor (which are cast in terms of $\tens$
and not $\parr$), and partly from the
inherent \scary{two-dimensionality} of the notion of $\V$-matrix, 
which underlies that of $\V$-profunctor.  
\end{rem}


\begin{proof} 
  Let 
  $\a$ and $\b$ be small $\V$-categories, 
  and $f$ a $\V$-profunctor $\proarrow:\a->\b;$.
  Then 
  we use the \scary{contraposition} isomorphisms
  \( x \loll y \to \perp{x} \llol \perp{y} \)
and 
  \( z \llol x \to \prep{z} \loll \prep{x} \) in $\V$
  to construct $\V$-arrows 
  \[ \vcenter{
    \infer{\xhom[\b]pq\tens\perp{(\xhom[f]rq)}\to\perp{(\xhom[f]rp)}}{
      \infer{\xhom[\b]pq\to\perp{(\xhom[f]rp)}\llol\perp{(\xhom[f]rq)}}{
        \infer{\xhom[\b]pq\to\xhom[f]rp\loll\xhom[f]rq}{
          \xhom[f]rp\tens\xhom[\b]pq\to\xhom[f]rq }}}} 
  \qquad
  \vcenter{
    \infer{\prep{(\xhom[f]rq)}\tens\xhom[\a]rs\to\prep{(\xhom[f]sq)}}{
      \infer{\xhom[\a]rs\to\prep{(\xhom[f]rq)}\loll\prep{(\xhom[f]sq)}}{
        \infer{\xhom[\a]rs\to\xhom[f]rq\llol\xhom[f]sq}{
          \xhom[\a]rs\tens\xhom[f]sq\to\xhom[f]rq }}}} 
  \]
  which exhibit: a left action of $\b$ on the family of objects 
  \( \xhom[\perp{f}]qr:=\perp{(\xhom[f]rq)}; \)
  and, a right action of $\a$ on the family of objects 
  \( \xhom[\prep{f}]qr:=\prep{(\xhom[f]rq)}. \)
  In other words, we obtain profunctors $\perp{f} : \proarrow:\b->\one;$
  and $\prep{f} : \proarrow:\one->\a;$. 

  At this point it is natural to use $\cycle$ to transport
  the left action of $\b$ on $\perp{f}$ to one on $\prep{f}$
  and to transport the right action of $\a$ on $\prep{f}$ to one on $\perp{f}$, as follows:
  \begin{diagram}@C=4pc@R=1pc{
    \xhom[\b]pq\tens\prep{(\xhom[f]rq)}
    \ar[r]^-{\id\tens\cycle\inverse} 
    & \xhom[\b]pq\tens\perp{(\xhom[f]rq)}
    \ar[r]^-{} 
    & \perp{(\xhom[f]rp)}
    \ar[r]^-{\cycle} 
    & \prep{(\xhom[f]rp)}
    \\ \perp{(\xhom[f]rq)}\tens\xhom[\a]rs
    \ar[r]^-{\cycle\tens\id} 
    & \prep{(\xhom[f]rq)}\tens\xhom[\a]rs
    \ar[r]^-{} 
    & \prep{(\xhom[f]sq)}
    \ar[r]^-{\cycle\inverse} 
    & \perp{(\xhom[f]sq)} }
  \end{diagram}
  To show that we obtain profunctors $\perp{f} : \proarrow:\b->\a;$ and
  $\prep{f} : \proarrow:\b->\a;$ from the four actions described above, 
  it remains to show that middle associativity holds; 
  this is surprisingly difficult.

  An alternative approach, favoured by the second author, is to
  derive maps
  $\arrow:\xhom[\b]pq\tens\prep{(\xhom[f]rq)}->\prep{(\xhom[f]rp)};$  
  and $\arrow:\perp{(\xhom[f]rq)}\tens\xhom[\a]rs->\perp{(\xhom[f]sq)};$
  by deCurrying each of the following composites.  
  \begin{diagram}@C=4pc@R=1pc{ 
      & \prep{\xhom[\b]pq} \parr \prep{(\xhom[f]rp)}
      \ar[r]^-{\cycle\parr\id}
      & \perp{\xhom[\b]pq} \parr \prep{(\xhom[f]rp)}
      \ar[d]|-\sim 
      \\ \prep{(\xhom[f]rq)}
      \ar[r]^-{}
      & \prep{(\xhom[f]rp\tens\xhom[\b]pq)}
      \ar[u]|-{\sim} 
      & \xhom[\b]pq\loll \prep{(\xhom[f]rp)}
      \\ \perp{(\xhom[f]rq)} \ar[r]^-{}
      & \perp{(\xhom[\a]rs \tens \xhom[f]sq)}
      \ar[d]|-\sim 
      & \perp{(\xhom[f]sq)} \llol \xhom[\a]rs
      \\
      & \perp{(\xhom[f]sq)} \parr \perp{\xhom[\a]rs}
      \ar[r]^-{\cycle\parr\id}
      & \perp{(\xhom[f]sq)} \parr \prep{\xhom[\a]rs}
      \ar[u]|-{\sim} }
  \end{diagram}
  The advantage of this approach is that middle associativity becomes
  trivial; the disadvantage is that ordinary (left- and right-)
  associativity becomes difficult. 

  The solution is to show that both approaches result in the same
  pair of arrows.  
  This is a simple exercise, yet it relies crucially on axiom \tbin:
  see Lemma \ref{appx:strings}.    


  By contrast, it is essentially tautologous that the family of
  $\V$-arrows 
  \begin{diagram}@C=7pc{
      \xhom[\perp{f}]qr=\perp{(\xhom[f]rq)} 
      \ar[r]^-{\cycle[{\xhom[f]rq}]}
      & \prep{(\xhom[f]rq)}=\xhom[\prep{f}]qr }
  \end{diagram}
  defines an invertible modulation of profunctors
  $\widetilde{\cycle}_f: \arrow:\perp{f}->;\prep{f}$.  


  Now we define $\parr$ and $\d$ as the de Morgan duals of $\tens$ and
  $\e$; 
  it is easy to work out that
  $f \parr g$ can be equivalently defined using \emph{ends}:   
  \[ \xhom[f \parr g]qs := \int_r \xhom[f]qr\parr\xhom[g]rs, \]
  together with a left $\a$-action derived from that of $f$, 
  and a right $\c$-action derived from that of $g$.
  Constructing the necessary linear distribution is routine:
  a modulation 
  $\arrow:f \tens (g \parr h)->(f \tens~g) \parr h;$
  is uniquely determined by an appropriate family of arrows 
  \begin{inline}{ 
    \xhom[f]pq \tens \xhom[g \parr h]qs 
    \ar[r]^-{} 
    & \xhom[f \tens g]pr \parr \xhom[h]rs }
  \end{inline}, 
  and it is neither hard to see that 
  \begin{diagram}{ 
    \xhom[f]pq \tens \xhom[g \parr h]qs 
    \ar[d]_-{\id\tens\pi_r} 
    & \xhom[f \tens g]pr \parr \xhom[h]rs 
    \\ \xhom[f]pq \tens (\xhom[g]qr \parr \xhom[h]rs) 
    \ar[r]^-{\llcad} 
    & (\xhom[f]pq \tens \xhom[g]qr) \parr \xhom[h]rs 
    \ar[u]_-{\copi_q\parr\id} }
  \end{diagram}        
  is such a family of arrows, nor that the resultant modulations
  satisfy the necessary coherence axioms 
  (compare with \cite{Egg:staf}). 
  Thus $\Profsymbol[\V]$ is a linear bicategory;
  in particular, $\Prof[\V]{\c,\c}$ is a \ld\ category for every $\c$. 

  The construction of modulations \[ \arrow\tau:\e->\perp{f}\parr f;, \quad
  \arrow\tau:\e->f\parr\prep{f};, \quad \arrow\gamma:f\tens\perp{f}->\d;\quad
  \textrm{and} \quad \arrow\gamma:\prep{f}\tens f->\d; \] satisfying the necessary 
  (linear) triangle identities (thus proving $\perp{f}$ and $\prep{f}$
  to be, respectively, right and left duals of $f$) is similarly
  routine.   
  Thus $\Profsymbol[\V]$ is a $*$-linear bicategory;
  in particular, $\Prof[\V]{\c,\c}$ 
  is a \staut\ category.  

  Given a modulation $\omega : \arrow:f->g;$, the dual modulation 
  $\perp\omega: \arrow:\perp{g}->\perp{f};$ is calculated pointwise, 
  as one would expect.
  \begin{diagram}@R=1pc{
    \xhom[\perp{g}]pq \ar[r]^-{\xhom[\perp\omega]pq} \ar@{=}[d]
    & \xhom[\perp{f}]pq \ar@{=}[d]
    \\ \perp{(\xhom[g]qp)} \ar[r]_-{\perp{(\xhom[\omega]qp)}} 
    & \perp{(\xhom[f]qp})}
  \end{diagram}
  Moreover, the cancellation modulations
  $\arrow:f->\perp{(\prep{f})};$ and $\arrow:f->\perp{(\prep{f})};$, 
  can also be calculated pointwise; 
  it follows that the de Morgan modulations, such as 
  $\arrow:\perp{(f \tens g)}->\perp{g}\parr\perp{f};$,
  are related to those of $\V$ as follows. 
  \begin{diagram}@R=1pc{
    \xhom[\perp{f \tens g}]pr \ar[r]^-{\demorgan} 
    \ar@{=}[d]
    & \xhom[\perp{g}\parr\perp{f}]pr \ar[r]^-{\pi_q} 
    & \xhom[\perp{g}]pq\parr\xhom[\perp{f}]qr               
    \ar@{=}[d]
    \\ \perp{(\xhom[f \tens g]rp)} \ar[r]_-{\perp{\copi_q}} 
    & \perp{(\xhom[f]rq\tens\xhom[g]qp)} \ar[r]_-{\demorgan} 
    & \perp{(\xhom[g]qp)} \parr \perp{(\xhom[f]rq})}
  \end{diagram}
  These observations allow one to quickly conclude that the
  {\ts}cyclicity of $\V$ is inherited by $\Prof[\V]{\c,\c}$, and also the
  {\kq}cyclicity, if $\V$ enjoys that property.
\end{proof}

\section{Braidings and cyclicities} \label{sec:more}
Throughout this section: 
$\K=\Kexplained$ denotes a \brdstaut\ category; 
$\balance$, a natural isomorphism of $\arrow:\Id[\K]->\Id[\K];$; 
$\cycle$, a natural isomorphism $\arrow:\perp\blank->\prep\blank;$;
and $\Cycle$ the corresponding natural isomorphism  
$\arrow:\xhom[\K]{{{-}\tens{?}}}\d->\xhom[\K]{{{?}\tens{-}}}\d;$.

\begin{rem} \label{rem:nplds}
  By a \emph{\brdstaut\ category} we mean simply a \staut\ category
  together with a braiding for $\tens$.    
  By duality, this automatically induces a braiding for
  $\parr$---hence, there are actually two braidings:  
  we write $\tbraid$ for the braiding on $\tens$, 
  and $\pbraid$ for the braiding on $\parr$. 
   
  These two braidings cohere with one another in the sense that the four
  diagrams 
  \begin{diagram}@C=9pt{
      & (p \parr q) \tens r 
      \ar[dl]_-{\pbraid[p,q]\inverse\tens\id[r]}
      \ar[dr]^-{\tbraid[p \parr q,r]}
      &&& r \tens (q \parr p) 
      \ar[dl]_-{\tbraid[q \parr p,r]}
      \ar[dr]^-{\id[r]\tens\pbraid[p,q]\inverse}
      \\ (q \parr p) \tens r 
      \ar[ddd]_-{\rrcad[q,p,r]}
      \ar@{<.}[dr]|-{\tbraid[q \parr p,r]\inverse}
      && r \tens (p \parr q) 
      \ar[ddd]^-{\llcad[r,p,q]}
      \ar@{<.}[dl]|-{\id[r]\tens\pbraid[p,q]}
      & (q \parr p) \tens r 
      \ar[ddd]_-{\rrcad[q,p,r]}
      \ar@{<.}[dr]|-{\pbraid[p,q]\tens\id[r]}
      && r \tens (p \parr q) 
      \ar[ddd]^-{\llcad[r,p,q]}
      \ar@{<.}[dl]|-{\tbraid[p \parr q,r]\inverse}
      \\ & r \tens (q \parr p) 
      &&& (p \parr q) \tens r 
      \\ & q \parr (r \tens p)
      \ar@{<.}[dl]|-{\id[q]\parr\tbraid[r,p]} 
      \ar@{<.}[dr]|-{\pbraid[q,r \tens p]\inverse} 
      &&& (p \tens r) \parr q 
      \ar@{<.}[dl]|-{\pbraid[q,p \tens r]\inverse} 
      \ar@{<.}[dr]|-{\tbraid[r,p]\parr\id[q]} 
      \\ q \parr (p \tens r) 
      \ar[dr]_-{\pbraid[q,p \tens r]} 
      && (r \tens p) \parr q 
      \ar[dl]^-{\tbraid[r,p]\inverse\parr\id[q]} 
      & q \parr (p \tens r) 
      \ar[dr]_-{\id[q]\parr\tbraid[r,p]\inverse} 
      && (r \tens p) \parr q 
      \ar[dl]^-{\pbraid[q,r \tens p]} 
      \\ & (p \tens r) \parr q 
      &&& q \parr (r \tens p) }
  \end{diagram}
  hold (see Lemma \ref{appx:nplds}).   
  (Compare with the definition of \symstaut\ category
      in \cite[\S 3]{CocSee:ldc}.) 
  This means that the non-standard wire-crossings listed in 
  (the central column of) Figure \ref{fig:nplds} are well-defined. 
  It also entails that in the degenerate case 
  where $\tens=\parr$, $\e=\d$, $\rrcad=\assoc$ and
  $\llcad=\assoc\inverse$, 
  one can derive $\tbraid=\pbraid$; simply
  set $p=\e=\d$ in the first solid diagram above, and reduce
  appropriately.  
\end{rem}

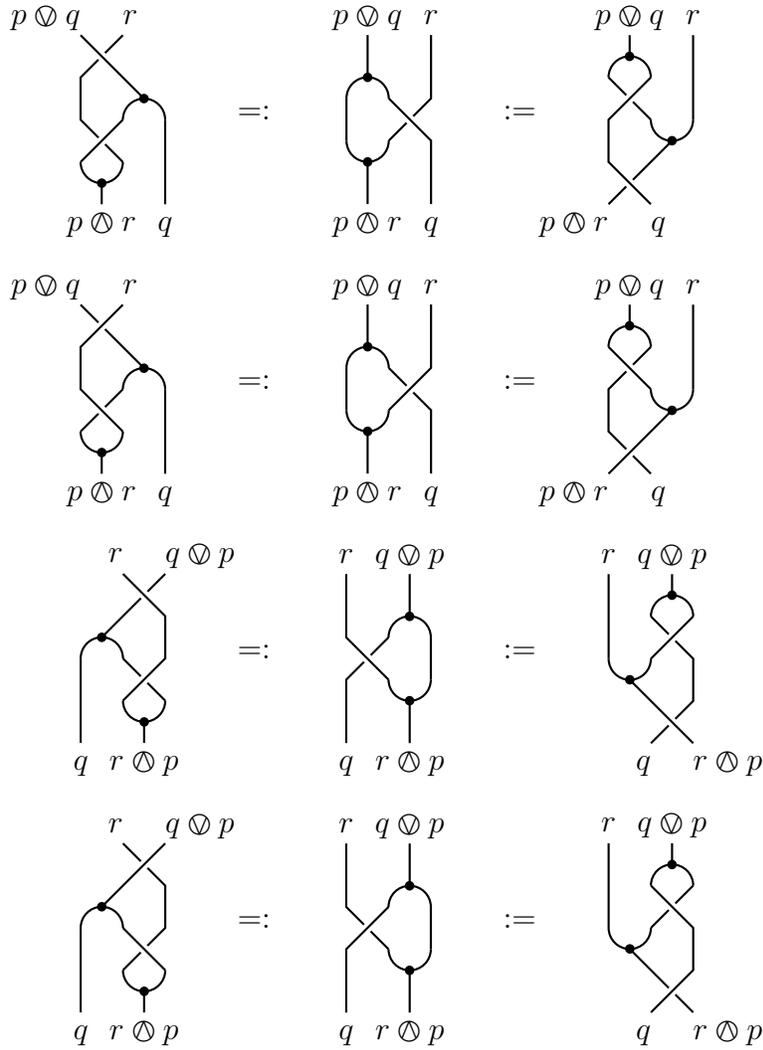
\begin{figure}
\begin{center}
  \begin{pspicture}[shift=-8](0,0)(8,16)
    \psarc(2,4){2}{180}{0}
    \psline(4,4)(0,8)(0,12)(4,16)
    \psdots[linecolor=white](2,6)(2,14)
    \psline(0,4)(4,8)
    \psarc(6,8){2}{0}{180}
    \psline(6,10)(4,12)(0,16)
    \psline(8,8)(8,0)
    \psline(2,2)(2,0)
    \psdots(2,2)(6,10)
    \uput[270](2,0){$\strut p \tens r$}
    \uput[270](8,0){$\strut q$}
    \uput[45](4,16){$\strut r$}
    \uput[135](0,16){$\strut p \parr q$}
  \end{pspicture}
  \qquad =: \qquad 
  \begin{pspicture}[shift=-8](0,-2)(8,14)
    \psarc(2,4){2}{180}{0}
    \psline(4,4)(8,8)(8,14)
    \psdots[linecolor=white](6,6)
    \psline(0,4)(0,8)
    \psarc(2,8){2}{0}{180}
    \psline(2,10)(2,14)
    \psline(4,8)(8,4)(8,-2)
    \psline(2,2)(2,-2)
    \psdots(2,2)(2,10)
    \uput[270](2,-2){$\strut p \tens r$}
    \uput[270](8,-2){$\strut q$}
    \uput[90](8,14){$\strut r$}
    \uput[90](2,14){$\strut p \parr q$}
  \end{pspicture}
  \qquad := \qquad 
  \begin{pspicture}[shift=-8](0,-4)(8,12)
    \psarc(6,4){2}{180}{0}
    \psline(8,4)(8,12)
    \psline(0,8)(4,4)
    \psline(6,2)(0,-4)
    \psdots[linecolor=white](2,6)(2,-2)
    \psarc(2,8){2}{0}{180}
    \psline(2,10)(2,12)
    \psline(4,8)(0,4)(0,0)(4,-4)
    \psdots(6,2)(2,10)
    \uput[225](0,-4){$\strut p \tens r$}
    \uput[315](4,-4){$\strut q$}
    \uput[90](8,12){$\strut r$}
    \uput[90](2,12){$\strut p \parr q$}
  \end{pspicture}

  \begin{pspicture}(0,0)(9,9)
  \end{pspicture} 

  \begin{pspicture}[shift=-8](0,0)(8,16)
    \psarc(2,4){2}{180}{0}
    \psline(0,4)(4,8)
    \psarc(6,8){2}{0}{180}
    \psline(6,10)(4,12)(0,16)
    \psline(8,8)(8,0)
    \psline(2,2)(2,0)
    \psdots(2,2)(6,10)
    \psdots[linecolor=white](2,6)(2,14)
    \psline(4,4)(0,8)(0,12)(4,16)
    \uput[270](2,0){$\strut p \tens r$}
    \uput[270](8,0){$\strut q$}
    \uput[45](4,16){$\strut r$}
    \uput[135](0,16){$\strut p \parr q$}
  \end{pspicture}
  \qquad =: \qquad 
  \begin{pspicture}[shift=-8](0,-2)(8,14)
    \psarc(2,4){2}{180}{0}
    \psline(0,4)(0,8)
    \psarc(2,8){2}{0}{180}
    \psline(2,10)(2,14)
    \psline(4,8)(8,4)(8,-2)
    \psline(2,2)(2,-2)
    \psdots(2,2)(2,10)
    \psdots[linecolor=white](6,6)
    \psline(4,4)(8,8)(8,14)
    \uput[270](2,-2){$\strut p \tens r$}
    \uput[270](8,-2){$\strut q$}
    \uput[90](8,14){$\strut r$}
    \uput[90](2,14){$\strut p \parr q$}
  \end{pspicture}
  \qquad := \qquad 
  \begin{pspicture}[shift=-8](0,-4)(8,12)
    \psarc(6,4){2}{180}{0}
    \psarc(2,8){2}{0}{180}
    \psline(8,4)(8,12)
    \psline(2,10)(2,12)
    \psline(4,8)(0,4)(0,0)(4,-4)
    \psdots(6,2)(2,10)
    \psdots[linecolor=white](2,6)(2,-2)
    \psline(0,8)(4,4)
    \psline(6,2)(0,-4)
    \uput[225](0,-4){$\strut p \tens r$}
    \uput[315](4,-4){$\strut q$}
    \uput[90](8,12){$\strut r$}
    \uput[90](2,12){$\strut p \parr q$}
  \end{pspicture}

  \begin{pspicture}(0,0)(-9,9)
  \end{pspicture} 

  \begin{pspicture}[shift=-8](-8,0)(0,16)
    \psarc(-2,4){2}{180}{0}
    \psline(0,4)(-4,8)
    \psarc(-6,8){2}{0}{180}
    \psline(-6,10)(-4,12)(0,16)
    \psline(-8,8)(-8,0)
    \psline(-2,2)(-2,0)
    \psdots(-2,2)(-6,10)
    \psdots[linecolor=white](-2,6)(-2,14)
    \psline(-4,4)(0,8)(0,12)(-4,16)
    \uput[270](-2,0){$\strut r \tens p$}
    \uput[270](-8,0){$\strut q$}
    \uput[135](-4,16){$\strut r$}
    \uput[45](0,16){$\strut q \parr p$}
  \end{pspicture}
  \qquad =: \qquad 
  \begin{pspicture}[shift=-8](-8,-2)(0,14)
    \psarc(-2,4){2}{180}{0}
    \psline(0,4)(0,8)
    \psarc(-2,8){2}{0}{180}
    \psline(-2,10)(-2,14)
    \psline(-4,8)(-8,4)(-8,-2)
    \psline(-2,2)(-2,-2)
    \psdots(-2,2)(-2,10)
    \psdots[linecolor=white](-6,6)
    \psline(-4,4)(-8,8)(-8,14)
    \uput[270](-2,-2){$\strut r \tens p$}
    \uput[270](-8,-2){$\strut q$}
    \uput[90](-8,14){$\strut r$}
    \uput[90](-2,14){$\strut q \parr p$}
  \end{pspicture}
  \qquad := \qquad 
  \begin{pspicture}[shift=-8](-8,-4)(0,12)
    \psarc(-6,4){2}{180}{0}
    \psarc(-2,8){2}{0}{180}
    \psline(-8,4)(-8,12)
    \psline(-2,10)(-2,12)
    \psline(-4,8)(0,4)(0,0)(-4,-4)
    \psdots(-6,2)(-2,10)
    \psdots[linecolor=white](-2,6)(-2,-2)
    \psline(0,8)(-4,4)
    \psline(-6,2)(0,-4)
    \uput[315](0,-4){$\strut r \tens p$}
    \uput[225](-4,-4){$\strut q$}
    \uput[90](-8,12){$\strut r$}
    \uput[90](-2,12){$\strut q \parr p$}
  \end{pspicture}

  \begin{pspicture}(0,0)(9,9)
  \end{pspicture} 

  \begin{pspicture}[shift=-8](-8,0)(0,16)
    \psarc(-2,4){2}{180}{0}
    \psline(-4,4)(0,8)(0,12)(-4,16)
    \psdots[linecolor=white](-2,6)(-2,14)
    \psline(0,4)(-4,8)
    \psarc(-6,8){2}{0}{180}
    \psline(-6,10)(-4,12)(0,16)
    \psline(-8,8)(-8,0)
    \psline(-2,2)(-2,0)
    \psdots(-2,2)(-6,10)
    \uput[270](-2,0){$\strut r \tens p$}
    \uput[270](-8,0){$\strut q$}
    \uput[135](-4,16){$\strut r$}
    \uput[45](0,16){$\strut q \parr p$}
  \end{pspicture}
  \qquad =: \qquad 
  \begin{pspicture}[shift=-8](-8,-2)(0,14)
    \psarc(-2,4){2}{180}{0}
    \psline(-4,4)(-8,8)(-8,14)
    \psdots[linecolor=white](-6,6)
    \psline(0,4)(0,8)
    \psarc(-2,8){2}{0}{180}
    \psline(-2,10)(-2,14)
    \psline(-4,8)(-8,4)(-8,-2)
    \psline(-2,2)(-2,-2)
    \psdots(-2,2)(-2,10)
    \uput[270](-2,-2){$\strut r \tens p$}
    \uput[270](-8,-2){$\strut q$}
    \uput[90](-8,14){$\strut r$}
    \uput[90](-2,14){$\strut q \parr p$}
  \end{pspicture}
  \qquad := \qquad 
  \begin{pspicture}[shift=-8](-8,-4)(0,12)
    \psarc(-6,4){2}{180}{0}
    \psline(-8,4)(-8,12)
    \psline(0,8)(-4,4)
    \psline(-6,2)(0,-4)
    \psdots[linecolor=white](-2,6)(-2,-2)
    \psarc(-2,8){2}{0}{180}
    \psline(-2,10)(-2,12)
    \psline(-4,8)(0,4)(0,0)(-4,-4)
    \psdots(-6,2)(-2,10)
    \uput[315](0,-4){$\strut r \tens p$}
    \uput[225](-4,-4){$\strut q$}
    \uput[90](-8,12){$\strut r$}
    \uput[90](-2,12){$\strut q \parr p$}
  \end{pspicture}
\end{center}
  \caption{Non-planar linear distributions in a braided \staut\ category} 
  \label{fig:nplds}
\end{figure}

In light of the remarks above, 
it will be convenient to use the following (provisional) terminology.  

\begin{defs} \label{def:balance}
Let 
$\balance$ be a natural isomorphism 
$\arrow:\Id[\K]->\Id[\K];$. 
Then we call $\balance$:
a \emph{{\ts}balance for $\K$} if it is a balance for $\tbraid$---that
is, if \tsbax\ holds;
a \emph{{\ps}balance for $\K$} if it is a balance for $\pbraid$---that
is, if \psbax\ holds; 
a \emph{balance for $\K$} if it is both a {\ts}balance and a 
{\ps}balance---that is, if both \tsbax\ and \psbax\ hold.
\begin{diagram}{ 
  p \tens q \ar[r]^-{\tbraid[p,q]} \ar[d]_-{\balance[p \tens q]} 
  \ar@{}[dr]|-{\tsbax}
  & q \tens p \ar[d]^-{\balance[q]\tens\balance[p]}
  & p \parr q \ar[r]^-{\tbraid[p,q]} \ar[d]_-{\balance[p \parr q]} 
  \ar@{}[dr]|-{\psbax}
  & q \parr p \ar[d]^-{\balance[q]\parr\balance[p]}
  \\ p \tens q 
  & q \tens p \ar[l]^-{\tbraid[q,p]} 
  & p \parr q 
  & q \parr p \ar[l]^-{\tbraid[q,p]} }
\end{diagram}
\end{defs}
Note that \tsbax\ entails $\balance[\e]=\id[\e]$ and that \psbax\
entails $\balance[\d]=\id[\d]$ for the same reason as
$\tbin\entails\tnul$ and $\pbin\entails\pnul$---see, again,
Lemma \ref{appx:snis}.  

\begin{thm} \label{thm:balance}
There are bijective correspondences between: 
  {\ts\cthing}s and {\ts}balances; 
  {\ps\cthing}s and {\ps}balances; 
  {\cthing}s and balances.
\end{thm}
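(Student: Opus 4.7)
The plan is to exhibit the correspondences by factoring any cyclicity through a canonical \scary{braiding-induced} isomorphism, with the remaining factor being a balance. In any \brdstaut\ category, the braiding $\tbraid$ induces a canonical natural isomorphism $\cyclesymbol^0_p : \perp{p} \to \prep{p}$ as follows: if unit $\eta_p$ and counit $\gdo_p$ exhibit $\perp{p}$ as a left dual of $p$, then post- and pre-composing with suitable components of $\tbraid$ exhibits $\perp{p}$ as a right dual of $p$, and the mate of this right duality with the chosen right dual $\prep{p}$ defines $\cyclesymbol^0_p$.

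Using $\cyclesymbol^0$, I would first establish a bijection, ignoring axioms, between natural isomorphisms $\cycle : \perp\blank \to \prep\blank$ and natural automorphisms $\balance : \Id[\K] \to \Id[\K]$. Given $\cycle$, the composite $(\cyclesymbol^0)\inverse \of \cycle$ is a natural automorphism of $\perp\blank$; via the cancellation $\canon : p \to \perp{(\prep{p})}$ and the functoriality of $\perp\blank$, this corresponds bijectively to a natural automorphism $\balance$ of $\Id[\K]$. The inverse construction sets $\cycle[p] := \cyclesymbol^0_p \of \perp{(\balance[p])}$; the two constructions are manifestly mutually inverse.

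The content of the theorem is that this bijection restricts to the three indicated axiom classes. The key step is the equivalence \tbin\ for $\cycle$ $\iff$ \tsbax\ for $\balance$; the $\ps$-case is analogous using $\pbraid$, and the full cyclic/balance correspondence follows by conjunction. Unpacking \tbin\ for $\cycle$ along the definition of $\cyclesymbol^0$, and invoking coherence of the braiding with duality, the axiom reduces to \tsbax\ for $\balance$ (and the identity balance, which corresponds to $\cyclesymbol^0$ itself, satisfies \tsbax\ iff $\tbraid$ is a symmetry --- consistent with the known fact that the braiding-induced iso is a true cyclicity only in the symmetric case).

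The main obstacle lies in this reduction: it is the formal counterpart of the \scary{$2\pi$ twist} argument sketched in the introduction, in which sliding a cyclicity past a braiding introduces exactly a balance. Using the graphical calculus of Figure \ref{fig:nplds} and the strictification of negations from section \ref{sec:zang}, this axiom comparison becomes a routine though lengthy diagram chase in which braidings, linear distributions and the chosen dualities are reorganised; no genuinely new ideas are required beyond the canonical construction $\cyclesymbol^0$ and the observation that twisting a dual by $\balance$ precisely accounts for the discrepancy between $\cycle$ and $\cyclesymbol^0$.
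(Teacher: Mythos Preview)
Your bijection coincides with the paper's: unwinding its formula $\nuCycle[p,t](\omega)=\braid[p,t]\fo(\balance[p]\tens\id[t])\fo\omega$ at $\omega=\gamma_p$ and right-currying yields exactly $\nucycle[p]=\cyclesymbol^0_p\circ\perp{(\balance[p])}$, so your factoring through a canonical braiding-induced $\cyclesymbol^0$ is a repackaging of the same construction. The paper does not isolate $\cyclesymbol^0$ explicitly; instead it writes down both directions $\cycle\mapsto\nubalance$ and $\balance\mapsto\nuCycle$ as concrete string-diagram formulae, proves that each preserves the relevant axioms, and then checks in a separate lemma that the two are mutually inverse. Your organisation (bijection first, axiom restriction second) is a clean alternative, but the underlying maps are identical.

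Where your proposal is thin is precisely the step you call routine. The paper does not obtain $\tbin\Leftrightarrow\tsbax$ by a single appeal to coherence: it proves $\tbin\Rightarrow\tsbax$ by an explicit graphical manipulation, and for the converse it establishes $\tsbax\Rightarrow\etwo$ (respectively $\psbax\Rightarrow\mtwo$) and then invokes the earlier Lemma~\ref{lem:jme} that $\etwo\Leftrightarrow\tbin$ (respectively $\mtwo\Leftrightarrow\pbin$). Your gesture towards ``coherence of the braiding with duality'' and the diagrams of Figure~\ref{fig:nplds} names the right ingredients, but the actual manipulation---especially the passage where the separate twists $\balance[p]$ and $\balance[q]$ are merged into $\balance[p\tens q]$ (or $\balance[p\parr q]$) across a mixed $\tens$/$\parr$ crossing---is the substance of the argument and does not fall out of any off-the-shelf coherence theorem. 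There is no wrong idea in your plan, but as written it stops just before the part that requires work.
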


This theorem will be proven by series of lemmata following the
graphical intuitions laid out in section \ref{sec:intr}.  
(But, since our ambient \staut\ category is only assumed to be
braided, we use wire/strings instead of tape/ribbons.)  

\begin{lem}
  Let $\nubalance$ be the \nt\ whose components are given by 
  (the common composite of) the diagram below---or, equivalently, by
  (the common value of) the string diagrams which follow. 
  \begin{diagram}@C=3.75pc{
      p \ar[r]^-{\rulaw[p]\inverse}
      \ar@/_/[dr]_-{\lulaw[p]\inverse}
      & p \tens \e \ar[r]^-{\id[p]\tens\tau}
      & p \tens (\perp{p} \parr p) 
      \ar[r]^-{\llcad[p,\perp{p},p](\cycle[p])}
      & (p \tens \prep{p}) \parr p 
      \ar[d]|-{\tbraid[p,\prep{p}]\inverse\parr\id[p]} 
      \\ 
      & \e \tens p \ar[r]^-{\tau\tens\id[p]} \ar[u]|-{\tbraid[\e,p]}
      & (\perp{p} \parr p) \tens p 
      \ar[d]|-{\pbraid[\perp{p},p]\inverse\tens\id[p]}
      \ar[u]|-{\tbraid[\perp{p} \parr p,p]}
      \ar@{.>}[r]
      & (\prep{p} \tens p) \parr p 
      \ar[r]^-{\gamma\parr\id[p]}
      & \d \parr p \ar@/^/[dr]^-{\lulaw[p]} 
      \ar[d]|-{\pbraid[\d,p]}
      \\ 
      & & (p \parr \perp{p}) \tens p 
      \ar[r]_-{\rrcad[p,\perp{p},p](\cycle[p])}
      & p \parr (\prep{p} \tens p) 
      \ar[u]|-{\pbraid[p,\prep{p} \tens p]} 
      \ar[r]_-{\id[p]\parr\gamma}
      & p \parr \d \ar[r]_-{\rulaw[p]} 
      & p }
  \end{diagram}
  \begin{center}
    \begin{pspicture}[shift=-8](0,0)(8,16)
      \psarc(2,4){2}{180}{0}
      \psline(4,4)(0,8)(0,14) 
      \psdots[linecolor=white](2,6) 
      \psline(0,4)(4,8)(4,12)
      \psarc(6,12){2}{0}{180}
      \psline(8,12)(8,2)
      \psframe[fillstyle=solid,fillcolor=boxcol](2.5,8.5)(5.5,11.5)
      \rput(4,10){\tiny $\strut \cycle[p]$}
      \uput[270](8,2){$\strut p$}
      \uput[90](0,14){$\strut p$}
    \end{pspicture}
    \qquad = \qquad 
    \begin{pspicture}[shift=-8](0,-2)(8,14)
      \psarc(2,4){2}{180}{0}
      \psline(4,4)(8,8)(8,12)
      \psdots[linecolor=white](6,6)
      \psline(0,4)(0,8)
      \psarc(2,8){2}{0}{180}
      \psline(4,8)(8,4)(8,0)
      \psframe[fillstyle=solid,fillcolor=boxcol](-1.5,4.5)(1.5,7.5)
      \rput(0,6){\tiny $\strut \cycle[p]$}
      \uput[270](8,0){$\strut p$}
      \uput[90](8,12){$\strut p$}
    \end{pspicture}
    \qquad = \qquad 
    \begin{pspicture}[shift=-9](0,-5)(8,13)
      \psarc(6,0){2}{180}{0}
      \psline(8,0)(8,10)
      \psline(0,8)(4,4)(4,0)
      \psdots[linecolor=white](2,6) 
      \psarc(2,8){2}{0}{180}
      \psline(4,8)(0,4)(0,-2) 
      \psframe[fillstyle=solid,fillcolor=boxcol](2.5,0.5)(5.5,3.5)
      \rput(4,2){\tiny $\strut \cycle[p]$}
      \uput[270](0,-2){$\strut p$}
      \uput[90](8,10){$\strut p$}
    \end{pspicture}
  \end{center}
  If $\cycle$ is a \ts\cthing, then $\nubalance$ is a {\ts}balance.
  Similarly, 
  if $\cycle$ is a \ps\cthing, then $\nubalance$ is a {\ps}balance.
  Hence, if $\cycle$ is a \cthing, then $\nubalance$ is a balance.
\end{lem}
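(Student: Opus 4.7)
The plan is to work in the graphical calculus for \brdstaut\ categories, where the three pictures in the statement are interchangeable representations of a single twist: their mutual equality follows from the coherence of the extended linear distributions (Figure \ref{fig:nplds}) together with the triangle identities for the duality $\tau,\gamma$, so I may pick whichever representative is most convenient for each half of the argument.

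For the \ts-direction, I substitute $p \tens q$ for $p$ in the defining diagram of $\nubalance$ and rewrite $\cycle[p \tens q]$ using the $\parr$-free form of \tbin\ established in the proof of Lemma \ref{lem:mbm}: namely,
\[ \cycle[p \tens q] \;=\; \perp{(p \tens q)} \isoto q \loll \perp{p}
   \stackrel{\id[q] \loll \cycle[p]}{\longrightarrow} q \loll \prep{p}
   \isoto \perp{q} \llol p
   \stackrel{\cycle[q] \llol \id[p]}{\longrightarrow} \prep{q} \llol p
   \isoto \prep{(p \tens q)}. \]
Inserting this factorisation into the picture of $\nubalance[p \tens q]$ replaces the single $\cycle[p \tens q]$-bead by two beads $\cycle[p]$ and $\cycle[q]$ sitting on separate wires, with the intervening $q \loll \perp{p} \isoto \perp{q} \llol p$ acting as a pure rearrangement of wires. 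Sliding $\tbraid$ past $\llcad$ and $\rrcad$ according to the coherence squares of Remark \ref{rem:nplds}, the diagram then visibly reshapes into the picture for $\tbraid[q,p] \of (\nubalance[q] \tens \nubalance[p]) \of \tbraid[p,q]$, which is the composite appearing in \tsbax.

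The \ps-direction is strictly dual: I apply the same contraposition trick to $\pbin$ in place of \tbin\ (using the isomorphisms $(x \loll z) \isoto \perp{x} \parr z$ and $(z \llol y) \isoto z \parr \prep{y}$ once more, but in the opposite direction) to split $\cycle[p \parr q]$ into beads $\cycle[p]$ and $\cycle[q]$, and the analogous rearrangement yields $\tbraid[q,p] \of (\nubalance[q] \parr \nubalance[p]) \of \tbraid[p,q]$, i.e.\ \psbax. The third claim is then the conjunction of the first two.

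The main obstacle is the bookkeeping in the middle step: every wire-move used to bring the split picture into the shape of the right-hand side of \tsbax\ (or \psbax) must be justified by naturality of $\tbraid$, $\pbraid$ and $\cycle$, together with the coherence of the two braidings with $\llcad$ and $\rrcad$ (the four squares of Remark \ref{rem:nplds}). Everything else---the triangle identities for $\tau$ and $\gamma$, the unit coherences, and the boundary identities $\nubalance[\e]=\id[\e]$ and $\nubalance[\d]=\id[\d]$ (which come for free, by the same semigroupal/monoidal reasoning that gave $\tbin\entails\tnul$)---is already built into the ambient \brdstaut\ structure, so no new conceptual ingredient is required.
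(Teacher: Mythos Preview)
Your proposal is correct and follows essentially the same route as the paper: a graphical argument in which the single application of \tbin\ (resp.\ \pbin) is the only non-formal step, splitting or merging the beads $\cycle[p]$, $\cycle[q]$ and $\cycle[p \tens q]$, with everything else reduced to braiding and linear-distribution coherence. The paper runs the chain of pictures in the opposite direction---starting from $\tbraid[q,p]\of(\nubalance[q]\tens\nubalance[p])\of\tbraid[p,q]$ and collapsing to $\nubalance[p \tens q]$---but this is an immaterial difference.
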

\begin{proof}
  We give a graphical proof of $\tbin\implies\tsbax$:
  \begin{center}
    \begin{pspicture}[shift=-10](0,0)(8,16)
      \psarc(2,4){2}{180}{0}
      \psline(0,4)(4,8)(4,12)
      \psdots[linecolor=white](2,6) 
      \psline(4,4)(0,8)(0,12)(4,16) 
      \psdots[linecolor=white](2,14)
      \psline(4,12)(0,16) 
      \psline(2,2)(2,0)
      \psdots(2,2)
      \psframe[fillstyle=solid,fillcolor=boxcol](-1.5,8.5)(1.5,11.5)
      \rput(0,10){\tiny $\strut \nubalance[q]$}
      \psframe[fillstyle=solid,fillcolor=boxcol](2.5,8.5)(5.5,11.5)
      \rput(4,10){\tiny $\strut \nubalance[p]$}
      \uput[270](2,0){$\strut p \tens q$}
      \uput[45](4,16){$\strut q$}
      \uput[135](0,16){$\strut p$}
    \end{pspicture}
    \quad := \quad 
    \begin{pspicture}[shift=-23](0,-13)(20,25) 
      \psarc(2,4){2}{180}{0}
      \psline(4,4)(0,8)(0,14)(8,22) 
      \psdots[linecolor=white](2,6)(6,20) 
      \psline(0,4)(4,8)(4,12)
      \psarc(6,12){2}{0}{180}
      \psframe[fillstyle=solid,fillcolor=boxcol](2.5,8.5)(5.5,11.5)
      \rput(4,10){\tiny $\strut \cycle[q]$}
      \psarc(14,4){2}{180}{0}
      \psline(16,4)(12,8)(12,14)(4,22) 
      \psdots[linecolor=white](14,6) 
      \psline(12,4)(16,8)(16,12)
      \psarc(18,12){2}{0}{180}
      \psline(20,12)(20,2)(12,-6)
      \psdots[linecolor=white](14,-4) 
      \psline(8,12)(8,2)(16,-6)
      \psframe[fillstyle=solid,fillcolor=boxcol](14.5,8.5)(17.5,11.5)
      \rput(16,10){\tiny $\strut \cycle[p]$}
      \psarc(14,-6){2}{180}{0}
      \psline(14,-8)(14,-10)
      \psdots(14,-8)
      \uput[45](8,22){$\strut q$}
      \uput[135](4,22){$\strut p$}
      \uput[270](14,-10){$\strut p \tens q$}
    \end{pspicture}
    \quad $= \cdots =$ \quad 
    \begin{pspicture}[shift=-16](4,-6)(24,18)
      \psarc(10,0){6}{180}{0}
      \psline(16,0)(8,8)(8,18) 
      \psdots[linecolor=white](2,6)(6,20) 
      \psarc(18,12){6}{0}{180}
      \psarc(10,0){2}{180}{0}
      \psline(12,0)(4,8)(4,18) 
      \psdots[linecolor=white](10,2)(8,4)(12,4)(10,6) 
      \psline(4,0)(12,8)(12,12)
      \psline(8,0)(16,8)(16,12)
      \psarc(18,12){2}{0}{180}
      \psline(20,12)(20,2) 
      \psline(24,12)(24,2) 
      \psarc(22,2){2}{180}{0}
      \psline(22,0)(22,-6)
      \psdots(22,0)
      \psframe[fillstyle=solid,fillcolor=boxcol](14.5,8.5)(17.5,11.5)
      \rput(16,10){\tiny $\strut \cycle[p]$}
      \psframe[fillstyle=solid,fillcolor=boxcol](10.5,8.5)(13.5,11.5)
      \rput(12,10){\tiny $\strut \cycle[q]$}
      \uput[45](8,18){$\strut q$}
      \uput[135](4,18){$\strut p$}
      \uput[270](22,-6){$\strut p \tens q$}
    \end{pspicture}
    \quad = \quad 


    \begin{pspicture}[shift=-19](4,-9)(20,21) 
      \psarc(10,0){6}{180}{0}
      \psline(16,0)(8,8)(8,18) 
      \psdots[linecolor=white](2,6)(6,20) 
      \psarc(10,0){2}{180}{0}
      \psline(12,0)(4,8)(4,18) 
      \psdots[linecolor=white](10,2)(8,4)(12,4)(10,6) 
      \psline(4,0)(12,8)(12,12)
      \psline(8,0)(16,8)(16,12)
      \psarc(14,12){2}{0}{180}
      \psline(20,14)(20,-6)
      \psdots(14,14)
      \psarc(17,14){3}{0}{180}
      \psframe[fillstyle=solid,fillcolor=boxcol](14.5,8.5)(17.5,11.5)
      \rput(16,10){\tiny $\strut \cycle[p]$}
      \psframe[fillstyle=solid,fillcolor=boxcol](10.5,8.5)(13.5,11.5)
      \rput(12,10){\tiny $\strut \cycle[q]$}
      \uput[45](8,18){$\strut q$}
      \uput[135](4,18){$\strut p$}
      \uput[270](20,-6){$\strut p \tens q$}
    \end{pspicture}
    \quad $\stackrel{\displaystyle\tbin}=$ \quad 
    \begin{pspicture}[shift=-21.5](4,-9)(18,21) 
      \psarc(10,0){6}{180}{0}
      \psline(16,0)(8,8)(8,18) 
      \psdots[linecolor=white](2,6)(6,20) 
      \psarc(10,0){2}{180}{0}
      \psline(12,0)(4,8)(4,18) 
      \psdots[linecolor=white](10,2)(8,4)(12,4)(10,6) 
      \psline(4,0)(12,8) 
      \psline(8,0)(16,8) 
      \psarc(14,8){2}{0}{180}
      \psline(18,14)(18,-6)
      \psdots(14,10)
      \psarc(16,14){2}{0}{180}
      \psline(14,14)(14,10)
      \psframe[fillstyle=solid,fillcolor=boxcol](11.5,11)(16.5,14)
      \rput(14,12.5){\tiny $\strut \cycle[p \tens q]$}
      \uput[45](8,18){$\strut q$}
      \uput[135](4,18){$\strut p$}
      \uput[270](18,-6){$\strut p \tens q$}
    \end{pspicture}
    \quad $= \cdots =$ \quad 
    \begin{pspicture}[shift=-10](0,0)(8,16)
      \psarc(2,4){2}{180}{0}
      \psline(4,4)(0,8)(0,14) 
      \psdots[linecolor=white](2,6) 
      \psline(0,4)(4,8)(4,12)
      \psarc(6,12){2}{0}{180}
      \psline(8,12)(8,2)
      \psframe[fillstyle=solid,fillcolor=boxcol](1.5,8.5)(6.5,11.5)
      \rput(4,10){\tiny $\strut \cycle[p \tens q]$}
      \uput[270](8,2){$\strut p \tens q$}
      \psarc(0,16){2}{180}{0}
      \psdots(0,14)
      \uput[90](-2,16){$\strut p$}
      \uput[90](2,16){$\strut q$}
    \end{pspicture}
    \quad =: \quad 
    \begin{pspicture}[shift=-10](-2,0)(2,16)
      \psline(0,4)(0,14) 
      \psframe[fillstyle=solid,fillcolor=boxcol](-2.5,8.5)(2.5,11.5)
      \rput(0,10){\tiny $\strut \nubalance[p \tens q]$}
      \uput[270](0,4){$\strut p \tens q$}
      \psarc(0,16){2}{180}{0}
      \psdots(0,14)
      \uput[90](-2,16){$\strut p$}
      \uput[90](2,16){$\strut q$}
    \end{pspicture}
  \end{center}
 
The proof of $\pbin\implies\psbax$ 
is exactly dual. 
\end{proof}

\begin{lem}
Let $\nuCycle$ be the operation on external hom-sets given below. 
\[ \nuCycle[p,t](\omega) :=
\braid[p,t]\fo(\balance[p]\tens\id[t])\fo\omega \]
\begin{center}
  \begin{pspicture}[shift=-1.5](-1.5,-3)(5.5,7)
    \psline(0,0)(0,4)
    \psline(4,0)(4,4)
    \psframe[fillstyle=solid,fillcolor=boxcol](-1.5,0)(5.5,-3)
    \rput(2,-1.5){$\omega$}
    \uput[90](0,4){$\strut p$}
    \uput[90](4,4){$\strut t$}
  \end{pspicture}
  \qquad{$\mapsto$}\qquad
  \begin{pspicture}[shift=-1.5](-1.5,-3)(5.5,11)
    \psline(0,0)(0,4)(4,8)
    \psdots[linecolor=white](2,6)
    \psline(4,0)(4,4)(0,8)
    \psframe[fillstyle=solid,fillcolor=boxcol](-1.5,0.5)(1.5,3.5)
    \rput(0,2){\tiny $\balance[p]$}
    \psframe[fillstyle=solid,fillcolor=boxcol](-1.5,0)(5.5,-3)
    \rput(2,-1.5){$\omega$}
    \uput[90](4,8){$\strut p$}
    \uput[90](0,8){$\strut t$}
  \end{pspicture}
\end{center}
If $\balance$ is a {\ts}balance, then $\nuCycle$ is a \ts\cthing.
Similarly, if $\balance$ is a {\ps}balance, then $\nuCycle$ is
a \ps\cthing. 
Hence, if $\balance$ is a balance, then $\nuCycle$ is a \cthing. 
\end{lem}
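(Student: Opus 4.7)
My plan is to verify the coherence axioms \tbin\ and \pbin\ for the natural isomorphism $\nucycle$ corresponding to $\nuCycle$ under the bijection of Remark \ref{rem:upperlowercase}, following the graphical conventions of the preceding lemma. First, I would compute $\nucycle[p]$ by evaluating $\nuCycle[p,\perp{p}]$ at the counit $\invlCurry{\id[\perp{p}]}$ and taking $\rCurry$; the result presents $\nucycle[p]$ as a single half-twist of the $p$-strand carrying $\balance[p]$---the mirror image of the string diagram used for $\nubalance[p]$ in the preceding lemma.

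To show that a {\ts}balance yields a \ts\cthing, I would draw both sides of \tbin\ as string diagrams. The composite $(\nucycle[q]\parr\nucycle[p])\of\demorgan$ displays two independent half-twists carrying $\balance[p]$ and $\balance[q]$ on separate strands, whereas $\demorgan\of\nucycle[p\tens q]$ is a single larger twist carrying $\balance[p\tens q]$ on the composite $p\tens q$ strand. Rewriting $\balance[p\tens q]$ via \tsbax\ as $\tbraid[q,p]\of(\balance[q]\tens\balance[p])\of\tbraid[p,q]$ introduces two additional crossings, which absorb into crossings already present in the larger twist by the naturality of $\tbraid$ and standard braid moves; what remains is precisely the two-twist side. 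This calculation is the exact reverse of the argument just given for deriving $\nubalance$ from $\cycle$.

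The case of a {\ps}balance is structurally the same, with \psbax\ and $\pbraid$ replacing \tsbax\ and $\tbraid$. Because $\nuCycle$ is defined in terms of the $\tens$-braiding rather than the $\parr$-braiding, it is cleanest to first pass to the $\parr$-free reformulation of \pbin\ derived in the proof of Lemma \ref{lem:mbm}, which trades $\demorgan$ for the contraposition isomorphisms $x\loll y\isoto\perp{x}\parr y$ and $z\llol x\isoto z\parr\prep{x}$. In that form, \psbax\ decomposes $\balance[p\parr q]$ in precisely the manner required, and the graphical calculation parallels the tensor-side one. The combined \cthing\ case is then immediate from the two.

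I expect the main obstacle to be the \pbin\ case: in a merely \brdstaut\ category, $\pbraid$-crossings can only be rearranged through the linear distributions via the coherence of Remark \ref{rem:nplds}, so the passage back and forth across the contraposition isomorphisms must be handled carefully in order to keep the verification within the planar graphical calculus of string diagrams.
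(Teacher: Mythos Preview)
Your approach is viable but differs from the paper's in a meaningful way. The paper never computes $\nucycle$ at all in this lemma; instead it verifies the \emph{upper-case} axioms directly for $\nuCycle$, namely $\etwo$ (equivalent to $\tbin$ by Lemma~\ref{lem:jme}) and $\mtwo$ (equivalent to $\pbin$). Since $\nuCycle[p,t](\omega)=\tbraid[p,t]\fo(\balance[p]\tens\id[t])\fo\omega$ is given directly as an operation on hom-sets, the paper simply precomposes twice and uses $\tsbax$ to collapse $(\balance[q]\tens\balance[p])$ and two crossings into $\balance[p\tens q]$ and a single crossing; for $\mtwo$ it chases a pair $(\omega,\psi)$ through $\lbind/\rbind$ and uses $\psbax$ in the same fashion. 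This avoids both the extra step of extracting $\nucycle$ from $\nuCycle$ and any appeal to de~Morgan or contraposition isomorphisms.

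Your route---compute $\nucycle$ first, then verify $\tbin$ and $\pbin$---should succeed, and for $\tbin$ it is indeed essentially the previous lemma run backwards. But note a slip in your $\pbin$ plan: Lemma~\ref{lem:mbm} derives a $\parr$-free form of $\tbin$, not of $\pbin$; the dual manoeuvre would give a $\tens$-free form of $\pbin$, which is not obviously helpful when $\nuCycle$ is expressed via $\tbraid$. You would likely end up re-deriving $\mtwo$ anyway, at which point the paper's direct verification is shorter. What your approach buys is a uniform treatment of both halves via the single object $\nucycle$; what the paper's buys is that no translation through Remark~\ref{rem:upperlowercase} is needed and the $\psbax$ step lands exactly where $\mtwo$ wants it.
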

\begin{proof}
  To prove $\tsbax\implies\etwo$, we need to show that the following
  equality holds, for every arrow  
  $\arrow\omega:(p \tens q)\tens t->\d;$.
  \begin{center}
    \begin{pspicture}[shift=-1.5](-1.5,-3)(9.5,7)
      \psarc(2,4){2}{180}{0}
      \psline(2,0)(2,2)
      \psdots(2,2)
      \psline(8,0)(8,4)
      \psframe[fillstyle=solid,fillcolor=boxcol](0.5,0)(9.5,-3)
      \rput(5,-1.5){$\omega$}
      \uput[90](0,4){$\strut p$}
      \uput[90](4,4){$\strut q$}
      \uput[90](8,4){$\strut t$}
    \end{pspicture}
    \quad{$\stackrel{\nuCycle[p,q \tens t]}\mapsto$}\quad
    \begin{pspicture}[shift=-1.5](-1.5,-3)(9.5,15)
      \psarc(2,4){2}{180}{0}
      \psline(2,0)(2,2)
      \psdots(2,2)
      \psline(0,4)(0,8)(8,12)
      \psdots[linecolor=white](2.667,9.333)(5.333,10.667)
      \psline(4,4)(4,8)(0,12)
      \psline(8,0)(8,8)(4,12) 
      \psframe[fillstyle=solid,fillcolor=boxcol](-1.5,4.5)(1.5,7.5)
      \rput(0,6){\tiny $\balance[p]$}
      \psframe[fillstyle=solid,fillcolor=boxcol](0.5,0)(9.5,-3)
      \rput(5,-1.5){$\omega$}
      \uput[90](8,12){$\strut p$}
      \uput[90](4,12){$\strut t$}
      \uput[90](0,12){$\strut q$}
    \end{pspicture}
    \quad{$\stackrel{\nuCycle[q,t \tens p]}\mapsto$}\quad
    \begin{pspicture}[shift=-1.5](-1.5,-3)(9.5,23)
      \psarc(2,4){2}{180}{0}
      \psline(2,0)(2,2)
      \psdots(2,2)
      \psline(0,4)(0,8)(8,12)(8,16)
      \psdots[linecolor=white](2.667,9.333)(5.333,10.667)
      \psline(4,4)(4,8)(0,12)(0,16)(8,20)
      \psdots[linecolor=white](2.667,17.333)(5.333,18.667)
      \psline(8,0)(8,8)(4,12)(4,16)(0,20) 
      \psline(8,16)(4,20)
      \psframe[fillstyle=solid,fillcolor=boxcol](-1.5,4.5)(1.5,7.5)
      \rput(0,6){\tiny $\balance[p]$}
      \psframe[fillstyle=solid,fillcolor=boxcol](-1.5,12.5)(1.5,15.5)
      \rput(0,14){\tiny $\balance[q]$}
      \psframe[fillstyle=solid,fillcolor=boxcol](0.5,0)(9.5,-3)
      \rput(5,-1.5){$\omega$}
      \uput[90](0,20){$\strut t$}
      \uput[90](4,20){$\strut p$}
      \uput[90](8,20){$\strut q$}
    \end{pspicture}
    \quad $\stackrel?=$ \quad
    \begin{pspicture}[shift=-1.5](-0.5,-3)(10,16)
      \psline(2,0)(2,6)(8,12)(8,14)
      \psarc(8,16){2}{180}{0}
      \psdots(8,14)
      \psframe[fillstyle=solid,fillcolor=boxcol](-0.5,1.5)(4.5,4.5)
      \rput(2,3){\tiny $\strut \balance[p \tens q]$}
      \psdots[linecolor=white](5,9)
      \psline(8,0)(8,6)(2,12)(2,16) 
      \psframe[fillstyle=solid,fillcolor=boxcol](0.5,0)(9.5,-3)
      \rput(5,-1.5){$\omega$}
      \uput[90](2,16){$\strut t$}
      \uput[90](6,16){$\strut p$}
      \uput[90](10,16){$\strut q$}
    \end{pspicture}
    \quad{$\stackrel{\nuCycle[p \tens q,t]}\mapsfrom$}\quad
    \begin{pspicture}[shift=-1.5](-1.5,-3)(9.5,7)
      \psarc(2,4){2}{180}{0}
      \psline(2,0)(2,2)
      \psdots(2,2)
      \psline(8,0)(8,4)
      \psframe[fillstyle=solid,fillcolor=boxcol](0.5,0)(9.5,-3)
      \rput(5,-1.5){$\omega$}
      \uput[90](0,4){$\strut p$}
      \uput[90](4,4){$\strut q$}
      \uput[90](8,4){$\strut t$}
    \end{pspicture}
  \end{center}
  This is easily established by the argument below. 
  \begin{center}
    \begin{pspicture}[shift=-1.5](-1.5,-3)(9.5,23)
      \psarc(2,4){2}{180}{0}
      \psline(2,0)(2,2)
      \psdots(2,2)
      \psline(0,4)(0,8)(8,12)(8,16)
      \psdots[linecolor=white](2.667,9.333)(5.333,10.667)
      \psline(4,4)(4,8)(0,12)(0,16)(8,20)
      \psdots[linecolor=white](2.667,17.333)(5.333,18.667)
      \psline(8,0)(8,8)(4,12)(4,16)(0,20) 
      \psline(8,16)(4,20)
      \psframe[fillstyle=solid,fillcolor=boxcol](-1.5,4.5)(1.5,7.5)
      \rput(0,6){\tiny $\balance[p]$}
      \psframe[fillstyle=solid,fillcolor=boxcol](-1.5,12.5)(1.5,15.5)
      \rput(0,14){\tiny $\balance[q]$}
      \psframe[fillstyle=solid,fillcolor=boxcol](0.5,0)(9.5,-3)
      \rput(5,-1.5){$\omega$}
      \uput[90](0,20){$\strut t$}
      \uput[90](4,20){$\strut p$}
      \uput[90](8,20){$\strut q$}
    \end{pspicture}
    \quad = \quad
    \begin{pspicture}[shift=-1.5](-1.5,-3)(9.5,23)
      \psarc(2,4){2}{180}{0}
      \psline(0,4)(4,8)(4,12)
      \psdots[linecolor=white](2,6) 
      \psline(4,4)(0,8)(0,12)(4,16)(8,20) 
      \psdots[linecolor=white](2,14)
      \psline(4,12)(0,16)(4,20)  
      \psline(2,2)(2,0)
      \psdots(2,2)
      \psframe[fillstyle=solid,fillcolor=boxcol](-1.5,8.5)(1.5,11.5)
      \rput(0,10){\tiny $\strut \balance[q]$}
      \psframe[fillstyle=solid,fillcolor=boxcol](2.5,8.5)(5.5,11.5)
      \rput(4,10){\tiny $\strut \balance[p]$}
      \psdots[linecolor=white](2.667,18.667)(5.333,17.333)
      \psline(8,0)(8,16)(0,20) 
      \psframe[fillstyle=solid,fillcolor=boxcol](0.5,0)(9.5,-3)
      \rput(5,-1.5){$\omega$}
      \uput[90](0,20){$\strut t$}
      \uput[90](4,20){$\strut p$}
      \uput[90](8,20){$\strut q$}
    \end{pspicture}
    \quad $\stackrel{\displaystyle\tsbax}{=}$ \quad
    \begin{pspicture}[shift=-1.5](-0.5,-3)(9.5,12)
      \psarc(2,8){2}{180}{0}
      \psline(4,8)(8,12)
      \psline(0,8)(4,12) 
      \psline(2,0)(2,6)
      \psdots(2,6)
      \psframe[fillstyle=solid,fillcolor=boxcol](-0.5,1.5)(4.5,4.5)
      \rput(2,3){\tiny $\strut \balance[p \tens q]$}
      \psdots[linecolor=white](2.667,10.667)(5.333,9.333)
      \psline(8,0)(8,8)(0,12) 
      \psframe[fillstyle=solid,fillcolor=boxcol](0.5,0)(9.5,-3)
      \rput(5,-1.5){$\omega$}
      \uput[90](0,12){$\strut t$}
      \uput[90](4,12){$\strut p$}
      \uput[90](8,12){$\strut q$}
    \end{pspicture}
    \quad = \quad
    \begin{pspicture}[shift=-1.5](-0.5,-3)(10,16)
      \psline(2,0)(2,6)(8,12)(8,14)
      \psarc(8,16){2}{180}{0}
      \psdots(8,14)
      \psframe[fillstyle=solid,fillcolor=boxcol](-0.5,1.5)(4.5,4.5)
      \rput(2,3){\tiny $\strut \balance[p \tens q]$}
      \psdots[linecolor=white](5,9)
      \psline(8,0)(8,6)(2,12)(2,16) 
      \psframe[fillstyle=solid,fillcolor=boxcol](0.5,0)(9.5,-3)
      \rput(5,-1.5){$\omega$}
      \uput[90](2,16){$\strut t$}
      \uput[90](6,16){$\strut p$}
      \uput[90](10,16){$\strut q$}
    \end{pspicture}
  \end{center}

To prove $\psbax\implies\mtwo$,
we need to show that, for every pair of arrows  
$\arrow\omega:p \tens t->\d;$ and $\arrow\psi:q \tens s->\d;$,
the result of chasing $(\omega,\psi)$ along the lower path of \mtwo\
equals the result of chasing it along the upper path. 
\begin{center}
  \begin{pspicture}[shift=-1.5](-5.5,-9)(9.5,7)
    \psline(4,0)(4,4)
    \psframe[fillstyle=solid,fillcolor=boxcol](-1.5,0)(5.5,-4)
    \rput(2,-2){$\psi$}
    \uput[90](4,4){$\strut s$}
    \psline(-4,-5)(-4,0)
    \psline(8,-5)(8,4)
    \psframe[fillstyle=solid,fillcolor=boxcol](-5.5,-5)(9.5,-9)
    \rput(2,-7){$\omega$}
    \uput[90](8,4){$\strut t$}
    \psarc(-2,0){2}{0}{180}
    \psdots(-2,2)
    \psline(-2,2)(-2,4)
    \uput[90](-2,4){$\strut p \parr q$}
  \end{pspicture}
  {\qquad$\stackrel{\nuCycle[p \parr q,s \tens t]}{\stackrel{?}{\mapsto}}$\qquad}
  \begin{pspicture}[shift=-1.5](-5.5,-9)(9.5,7)
    \psline(0,0)(0,4)
    \psframe[fillstyle=solid,fillcolor=boxcol](-3,0)(7,-4)
    \rput(2,-2){$\nuCycle[p,t](\omega)$}
    \uput[90](0,4){$\strut t$}
    \psline(-4,-5)(-4,4)
    \psline(8,-5)(8,0)
    \psframe[fillstyle=solid,fillcolor=boxcol](-5.5,-5)(9.5,-9)
    \rput(2,-7){$\nuCycle[q,s](\psi)$}
    \uput[90](-4,4){$\strut s$}
    \psarc(6,0){2}{0}{180}
    \psdots(6,2)
    \psline(6,2)(6,4)
    \uput[90](6,4){$\strut p \parr q$}
  \end{pspicture}
\end{center}
Again, this is easily proven, as follows.
\begin{center}
  \begin{pspicture}[shift=-1.5](-5.5,-7)(9.5,19)
    \psline(-2,2)(-2,8)(6,16)
    \psarc(-2,0){2}{0}{180}
    \psline(-4,-4)(-4,0)
    \psdots(-2,2)
    \psdots[linecolor=white](1,11)(3,13)
    \psline(4,0)(4,8)(-4,16)
    \psline(8,-4)(8,8)(0,16)
    \psframe[fillstyle=solid,fillcolor=boxcol](-4.5,4)(0.5,7)
    \rput(-2,5.5){\tiny $\balance[p \parr q]$}
    \psframe[fillstyle=solid,fillcolor=boxcol](-1.5,0)(5.5,-3)
    \rput(2,-1.5){$\psi$}
    \psframe[fillstyle=solid,fillcolor=boxcol](-5.5,-4)(9.5,-7)
    \rput(2,-5.5){$\omega$}
    \uput[90](-4,16){$\strut s$}
    \uput[90](0,16){$\strut t$}
    \uput[90](6,16){$\strut p \parr q$}
  \end{pspicture}
  {\quad $\stackrel{\displaystyle\psbax}{=}$ \quad}
  \begin{pspicture}[shift=-1.5](-5.5,-13)(9.5,19)
    \psline(-2,8)(6,16)
    \psarc(-2,6){2}{0}{180}
    \psline(0,6)(-4,2)(-4,-2)
    \psdots[linecolor=white](-2,4)
    \psline(-4,6)(0,2)(0,-2)(-4,-6)(-4,-10)
    \psdots(-2,8)
    \psdots[linecolor=white](1,11)(3,13)(-2,-4)
    \psline(-4,-2)(0,-6)
    \psline(4,-6)(4,8)(-4,16)
    \psline(8,-10)(8,8)(0,16)
    \psframe[fillstyle=solid,fillcolor=boxcol](-5.5,-1.5)(-2.5,1.5)
    \rput(-4,0){\tiny $\balance[q]$}
    \psframe[fillstyle=solid,fillcolor=boxcol](-1.5,-1.5)(1.5,1.5)
    \rput(0,0){\tiny $\balance[p]$}
    \psframe[fillstyle=solid,fillcolor=boxcol](-1.5,-6)(5.5,-9)
    \rput(2,-7.5){$\psi$}
    \psframe[fillstyle=solid,fillcolor=boxcol](-5.5,-10)(9.5,-13)
    \rput(2,-11.5){$\omega$}
    \uput[90](-4,16){$\strut s$}
    \uput[90](0,16){$\strut t$}
    \uput[90](6,16){$\strut p \parr q$}
  \end{pspicture}
  {\quad=\quad}
  \begin{pspicture}[shift=-1.5](-9.5,-13)(9.5,19)
    \psline(-2,8)(6,16)
    \psarc(-2,6){2}{0}{180}
    \psline(0,6)(-8,-2)(-8,-6)
    \psdots[linecolor=white](-2,4)(-6,0)
    \psline(-4,6)(0,2)(0,-2)(4,-6)(4,-10)
    \psdots(-2,8)
    \psdots[linecolor=white](3,13)
    \psline(-4,-6)(-4,-2)(-8,2)(-8,16) 
    \psline(8,-10)(8,8)(0,16)
    \psframe[fillstyle=solid,fillcolor=boxcol](-9.5,-5.5)(-6.5,-2.5)
    \rput(-8,-4){\tiny $\balance[q]$}
    \psframe[fillstyle=solid,fillcolor=boxcol](-1.5,-1.5)(1.5,1.5)
    \rput(0,0){\tiny $\balance[p]$}
    \psframe[fillstyle=solid,fillcolor=boxcol](-9.5,-6)(-2.5,-9)
    \rput(-6,-7.5){$\psi$}
    \psframe[fillstyle=solid,fillcolor=boxcol](2.5,-10)(9.5,-13)
    \rput(6,-11.5){$\omega$}
    \uput[90](-8,16){$\strut s$}
    \uput[90](0,16){$\strut t$}
    \uput[90](6,16){$\strut p \parr q$}
  \end{pspicture}
  {\quad=\quad}
  \begin{pspicture}[shift=-1.5](-12,-13)(0,19)
    \psline(-2,14)(-2,16)
    \psarc(-2,12){2}{0}{180}
    \psdots(-2,14)
    \psline(-4,12)(-8,8)(-8,4)
    \psdots[linecolor=white](-6,10) 
    \psline(-4,4)(-4,8)(-8,12)(-8,16) 
    \psframe[fillstyle=solid,fillcolor=boxcol](-9.5,4.5)(-6.5,7.5)
    \rput(-8,6){\tiny $\balance[p]$}
    \psframe[fillstyle=solid,fillcolor=boxcol](-9.5,4)(-2.5,1)
    \rput(-6,2.5){$\omega$}
    \psline(0,12)(0,2)(-8,-6)(-8,-10)
    \psdots[linecolor=white](-6,-4) 
    \psline(-4,-10)(-4,-6)(-12,2)(-12,16) 
    \psframe[fillstyle=solid,fillcolor=boxcol](-9.5,-9.5)(-6.5,-6.5)
    \rput(-8,-8){\tiny $\balance[q]$}
    \psframe[fillstyle=solid,fillcolor=boxcol](-9.5,-10)(-2.5,-13)
    \rput(-6,-11.5){$\psi$}
    \uput[90](-12,16){$\strut s$}
    \uput[90](-8,16){$\strut t$}
    \uput[90](-2,16){$\strut p \parr q$}
  \end{pspicture}
\end{center}
\end{proof}

\begin{lem}
The two constructions outlined above are inverse to one another
---that is, $\nunuCycle=\Cycle$ and $\nunubalance=\balance$ 
(where $\nucycle$ corresponds to $\nuCycle$).    
\end{lem}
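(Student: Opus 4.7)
The plan is to verify both equalities by direct graphical computation, in each case reducing to the triangle (snake) identities for the dualities $p \dashv \perp{p}$ and $\prep{p} \dashv p$, together with naturality of the braiding and the coherence of the non-planar linear distributions assembled in Figure~\ref{fig:nplds}. The two checks are essentially dual in character but share a common template: unfold the inner construction into a small string-diagram, then cancel the braiding-plus-snake that appears.

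For $\nunubalance = \balance$: starting from a balance $\balance$, I would first unfold $\nucycle[p] = \rCurry{\nuCycle[p,\perp{p}](\invlCurry{\id[\perp{p}]})}$, which by the defining formula $\nuCycle[p,t](\omega) = \braid[p,t] \fo (\balance[p] \tens \id[t]) \fo \omega$ exhibits $\nucycle[p] : \perp{p} \to \prep{p}$ as a short composite of a braiding, the box $\balance[p]$, and the counit $\gdo_p$. Substituting this into (say) the middle of the three depicted forms of $\nubalance[p]$ produces a picture in which the surrounding cup and cap of the $\nubalance$-construction enclose the cap-with-twist coming from $\nucycle[p]$; two snake identities (one each for $p \dashv \perp{p}$ and $\prep{p} \dashv p$) collapse the duality structure, while the braiding introduced by $\nuCycle$ cancels against the braiding already present in $\nubalance$, leaving exactly $\balance[p]$ on the $p$-strand.

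For $\nunuCycle = \Cycle$: given $\omega : p \tens t \to \d$, I would unfold $\nubalance[p]$ inside the formula $\nunuCycle[p,t](\omega) = \braid[p,t] \fo (\nubalance[p] \tens \id[t]) \fo \omega$, producing a diagram with $\omega$ at the base, a loop on the $p$-strand carrying $\cycle[p]$, and a terminal braiding of $t$ past $p$. Using naturality of $\tbraid$ and $\pbraid$ to slide the $t$-strand through the loop, followed by a single snake identity for $p \dashv \perp{p}$, the picture straightens to precisely the canonical depiction of $\Cycle[p,t](\omega)$ shown in Remark~\ref{rem:upperlowercase}.

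The main obstacle is bookkeeping rather than invention: one must track which braiding ($\tbraid$ or $\pbraid$, direct or inverse) appears at each crossing, and apply the correct non-planar linear distribution whenever a wire crosses a $\tens$-to-$\parr$ boundary. Once the diagrams are drawn carefully, both reductions are routine applications of the snake identities, naturality, and the coherence of the two braidings established in Remark~\ref{rem:nplds}.
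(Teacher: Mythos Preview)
Your proposal is correct and follows essentially the same approach as the paper. The paper is even terser about the $\nunuCycle=\Cycle$ direction, simply observing that it is the ribbon argument from the introduction with the questionable equality replaced by the definition of $\nubalance$; your description of sliding the $t$-strand through the loop and applying a snake identity is exactly that argument read in the other direction. For $\nunubalance=\balance$ the paper does precisely what you outline: compute $\nucycle[p]$ explicitly, insert it into one of the three equivalent pictures of $\nubalance$, and collapse via the triangle identities.
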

\begin{proof}
That, for any given $\cycle$, 
$\nunuCycle=\Cycle$ is simply a more rigorous version
of the argument appearing in section \ref{sec:intr}:
one simply substitutes a $=:$ for a $\stackrel?=$.   

The converse requires a little more work: given a $\balance$, 
we must work out the $\nucycle$ corresponding to $\nuCycle$.
According to Remark \ref{rem:upperlowercase}, we have 
$ \nucycle[p] =
\rCurry{\nuCycle[p,\perp{p}](\invlCurry{\id[\perp{p}])}} $.
Hence, $\nunubalance[p]$ equals
\begin{center}
  \begin{pspicture}[shift=-8](0,-4)(8,12)
    \psarc(6,0){2}{180}{0}
    \psline(8,0)(8,10)
    \psline(0,8)(4,4)(4,0)
    \psdots[linecolor=white](2,6) 
    \psarc(2,8){2}{0}{180}
    \psline(4,8)(0,4)(0,-2) 
    \psframe[fillstyle=solid,fillcolor=boxcol](2.5,0.5)(5.5,3.5)
    \rput(4,2){\tiny $\strut \nucycle[p]$}
    \uput[270](0,-2){$\strut p$}
    \uput[90](8,10){$\strut p$}
  \end{pspicture}
  {\quad=\quad} 
  \begin{pspicture}[shift=-9](-4,-7)(12,19)
    \psline(-4,14)(0,10)
    \psdots[linecolor=white](-2,12) 
    \psarc(-2,14){2}{0}{180}
    \psline(0,14)(-4,10)(-4,-4)
    \psline(0,0)(0,4)(4,8)
    \psdots[linecolor=white](2,6)
    \psline(4,0)(4,4)(0,8)(0,10)
    \psframe[fillstyle=solid,fillcolor=boxcol](-1.5,0.5)(1.5,3.5)
    \rput(0,2){\tiny $\balance[p]$}
    \psarc(2,0){2}{180}{0}
    \psarc(6,8){2}{0}{180}
    \psline(8,8)(8,-2)
    \psarc(10,-2){2}{180}{0}
    \psline(12,-2)(12,16)
    \uput[270](-4,-4){$\strut p$}
    \uput[90](12,16){$\strut p$}
    \psframe[linestyle=dotted](-2,-2.5)(10,10.5)
  \end{pspicture}
  {\quad=\quad} 
  \begin{pspicture}[shift=-9](-1.5,-7)(8,19)
    \psline(0,0)(0,4)(0,12)
    \psline(4,0)(4,4) 
    \psframe[fillstyle=solid,fillcolor=boxcol](-1.5,0.5)(1.5,3.5)
    \rput(0,2){\tiny $\balance[p]$}
    \psarc(2,0){2}{180}{0}
    \psarc(6,4){2}{0}{180}
    \psline(8,4)(8,-4)
    \uput[270](8,-4){$\strut p$}
    \uput[90](0,12){$\strut p$}
  \end{pspicture}
  {\quad=\quad} 
  \begin{pspicture}[shift=-9](-1.5,-7)(1.5,19)
    \psline(0,-4)(0,8)
    \psframe[fillstyle=solid,fillcolor=boxcol](-1.5,0.5)(1.5,3.5)
    \rput(0,2){\tiny $\balance[p]$}
    \uput[270](0,-4){$\strut p$}
    \uput[90](0,8){$\strut p$}
  \end{pspicture}
\end{center}
This concludes the proof of Theorem \ref{thm:balance}.
\end{proof}

By similar arguments, it is possible to show, for arbitrary
natural isomorphisms $\balance:\arrow:\Id[\K]->\Id[\K];$, 
that $\balance[\e]=\id[\e]$ if and only if $\nucycle$ satisfies $\tnul$, 
and that $\balance[\d]=\id[\d]$ if and only if $\nucycle$ satisfies
$\pnul$. 
Applying Lemma \ref{lem:mbm}, one sees that $\tsbax$ and
$\balance[\d]=\id[\d]$ (or, alternatively, $\psbax$ and
$\balance[\e]=\id[\e]$) suffice to show that $\balance$ is a balance
for $\K$ in the sense of Definition \ref{def:balance}.

\begin{cor}
$\ioCycle$
is a \cthing\ if and only if $\tbraid$ (and therefore also $\pbraid$)
is a symmetry.      
\end{cor}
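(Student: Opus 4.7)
The plan is to apply Theorem \ref{thm:balance} and reduce the corollary to a simple check on the balance axioms. By that theorem, the identity cyclicity $\ioCycle$ (obtained by setting $\cycle = \id$) is a cycle if and only if the natural transformation $\Id[\K] \to \Id[\K]$ obtained by substituting $\cycle[p] = \id[\perp{p}]$ into the formula for $\nubalance$ in the first lemma of this section is a balance on $\K$. I would first verify that this substitution produces the identity natural transformation: the braid factor $\tbraid[p,\prep{p}]\inverse$ appearing in the composite for $\nubalance[p]$ gets absorbed by the standard convention (in a braided \staut\ category) that the evaluation map $\gamma$ for $\prep{p}$ as a left dual is the corresponding evaluation for $\perp{p}$ as a right dual precomposed with a braid; what remains, once the braids cancel, is the triangle-identity composite for the duality, which equals $\id[p]$.

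Granted this identification, the corollary reduces to: the identity natural transformation is a balance if and only if $\tbraid$ is a symmetry. The axiom \tsbax\ with $\balance = \id$ reads $\id[p \tens q] = \tbraid[q,p] \of \tbraid[p,q]$, which is precisely the symmetry condition on $\tbraid$. Conversely, when $\tbraid$ is a symmetry, the coherence between $\tbraid$ and $\pbraid$ recorded in Remark \ref{rem:nplds} ensures that $\pbraid$ is also a symmetry, so \psbax\ with $\balance = \id$ also holds, confirming that the identity is in fact a balance.

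The main obstacle is the first step: the calculation that substituting $\cycle = \id$ yields the identity natural transformation. It requires careful bookkeeping of which side's duality maps appear at each stage of the composite, and relies on the standard identification of the left- and right-dual evaluations via the braiding. Once this identification is made, the rest of the argument is formal.
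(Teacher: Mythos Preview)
Your argument is correct in outline, but you have misread the notation and thereby created work for yourself that the paper avoids. In the paper's conventions the superscript on $\Cyclesymbol$ always names a \emph{balance}: $\nuCycle$ is written $\Cyclesymbol^{\balancesymbol}$, so $\ioCycle=\Cyclesymbol^{\id}$ means, by definition, the operation $\omega\mapsto\braid[p,t]\fo\omega$ obtained by setting $\balancesymbol=\id$ in the formula for $\nuCycle$---not the $\Cyclesymbol$ associated (via Remark~\ref{rem:upperlowercase}) to $\cyclesymbol=\id$. With that reading, Theorem~\ref{thm:balance} applies immediately: $\Cyclesymbol^{\id}$ is a \cthing\ if and only if the identity is a balance, and \tsbax\ with $\balancesymbol=\id$ is literally the equation $\tbraid[q,p]\of\tbraid[p,q]=\id$. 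That $\pbraid$ is then also a symmetry follows because $\pbraid$ is constructed from $\tbraid$ by duality.

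The ``main obstacle'' you isolate---verifying that substituting $\cyclesymbol=\id$ into the formula for $\nubalance$ yields the identity---is therefore unnecessary. What you are proposing to compute is essentially the special case $\balancesymbol=\id$ of the inverse law $\nunubalance=\balance$ already proven in the third lemma, together with an identification of the lowercase $\nucycle$ (for $\balancesymbol=\id$) with $\id$; the latter does depend, as you note, on the convention relating the left and right evaluation maps via the braiding. None of this is wrong, but it duplicates work already done and introduces a dependence on conventions that the direct route sidesteps entirely.
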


We now turn to the question of {\kq}cyclicity; in particular, 
whether it is possible that $\ioCycle$ be a \kq\cthing\ even if
$\tbraid$ is not a symmetry.  
Observe that every object $p$ of a \brdstaut\ category $\K$ admits a
canonical \emph{$4\pi$-twist}: 
\begin{center}
    \begin{pspicture}[shift=-8](0,-3)(8,15) 
      \psline(0,12)(0,10)(4,6)
      \psline(0,6)(4,2)
      \psdots[linecolor=white](2,8)(2,4)
      \psline(0,2)(4,6)
      \psline(0,6)(4,10)
      \psarc(2,2){2}{180}{0}
      \psarc(6,10){2}{0}{180}
      \psline(8,10)(8,0)
      \uput[90](0,12){$\strut p$} 
      \uput[270](8,0){$\strut p$} 
    \end{pspicture}
    {\quad=\quad}
    \begin{pspicture}[shift=-8](0,-3)(16,15) 
      \psarc(2,2){2}{180}{0}
      \psarc(2,6){2}{0}{180}
      \psarc(14,6){2}{180}{0}
      \psarc(14,10){2}{0}{180}
      \psline(0,2)(0,6)
      \psline(16,10)(16,6)
      \psline(4,2)(12,10)
      \psdots[linecolor=white](10,8)(6,4)
      \psline(4,6)(8,2)(8,0)
      \psline(8,12)(8,10)(12,6)
      \uput[90](8,12){$\strut p$} 
      \uput[270](8,0){$\strut p$} 
    \end{pspicture}
    {\quad=\quad}
    \begin{pspicture}[shift=-8](0,-3)(8,15) 
      \psline(4,10)(8,6)
      \psline(4,6)(8,2)(8,0)
      \psdots[linecolor=white](6,8)(6,4)
      \psline(4,2)(8,6)
      \psline(4,6)(8,10)
      \psarc(2,2){2}{180}{0}
      \psarc(6,10){2}{0}{180}
      \psline(0,12)(0,2)
      \uput[90](0,12){$\strut p$} 
      \uput[270](8,0){$\strut p$} 
    \end{pspicture}
\end{center}
---we shall denote this map $\stitch[p]$. 
Of course, if the braiding happens to be a symmetry, then $\stitch[p]$
will be the identity for all $p$.
The converse is false: 
Gabriella B\"{o}hm and the second author have together constructed
a class of braided Hopf algebras $H$ with the property that $\Rep{H}$,
the category of finite-dimensional $H$-modules, satisfies
$\stitch=\id$ and $\braid^2\neq\id$.     
The simplest of these is the Drinfeld double of (the group algebra of)
$\mathbb Z_2$ together with its universal $R$-matrix.   

\begin{thm} \label{thm:kqbalance}
Using the same notation as before, 
$\nuCycle$ is a \kq\cthing\ if and only if 
\begin{diagram}{ 
  p \ar[r]^-{\balance[p]} 
  & p \ar[r]^-{\canon[p]} 
  & \prep{(\perp{p})} \ar[r]^-{\prep{(\balance[\perp{p}])}} 
  & \prep{(\perp{p})} \ar[r]^-{\canon[p]\inverse} 
  & p }
\end{diagram}
equals $\stitch[p]$ for all $p$.  
In particular, $\ioCycle$ is a \kq\cthing\ if and only if
$\stitch[p]=\id[p]$ for all $p$. 
\end{thm}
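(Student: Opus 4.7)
The plan is as follows. By Lemma \ref{lem:mbm}, $\nucycle$ is a \kq\cthing\ precisely when \kimmo\ holds for $\nucycle$; post-composing with $\canon[p]\inverse$, this is equivalent to the assertion that the endomorphism
\[ \phi_p \;:=\; \canon[p]\inverse \of \nucycle[\perp{p}] \of \perp{(\nucycle[p])} \of \canon[p] \]
of $p$ equals $\id[p]$ for every object $p$. The theorem will follow once I show
\[ \phi_p \;=\; \canon[p]\inverse \of \prep{(\balance[\perp{p}])} \of \canon[p] \of \balance[p] \of \stitch[p]\inverse, \]
since rearrangement then yields the displayed equation; the in-particular statement is the special case $\balance = \id$ (for which $\nuCycle$ coincides with $\ioCycle$), in which the condition collapses to $\stitch[p] = \id[p]$.

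To establish the claimed form of $\phi_p$, I expand $\nucycle[r]$ graphically using the explicit description given in the previous lemma (a single $\balance[r]$-box threaded through a braid crossing together with a cup and a cap), and similarly expand $\perp{(\nucycle[p])}$ using the standard de Morgan reflection of a string diagram under dualization. Assembling these with the two $\canon$ isomorphisms produces a diagram featuring $\balance[p]$, the dualization of $\balance[p]$, $\balance[\perp{p}]$, several braid crossings, and several cup/cap pairs. Using naturality of $\balance$ (as a natural endomorphism of $\Id_\K$ it commutes with every arrow) together with naturality of the braiding, I slide the $\balance[p]$-contribution down onto the outgoing $p$-strand, and slide the remaining $\balance$-contribution onto an intermediate $\perp{p}$-strand flanked by the two $\canon$'s; the de Morgan conventions then convert the latter to $\canon[p]\inverse \of \prep{(\balance[\perp{p}])} \of \canon[p]$.

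What remains once the $\balance$-boxes have been isolated is a purely topological configuration of braid crossings and cup/cap pairs. By repeated application of the triangle (snake) identities together with naturality of the braiding, this configuration unravels to a single $p$-strand carrying two full self-crossings---which, by the definition given immediately before the theorem, is $\stitch[p]$. The main obstacle is the graphical bookkeeping in this last step: verifying that the surviving topological remainder is exactly $\stitch[p]$ with the correct orientation (rather than its inverse or a single $2\pi$-twist), and that the interplay between the dualized $\balance[p]$-box and the $\balance[\perp{p}]$-box really collapses to the claimed single conjugated factor. These checks are routine in principle, but require careful tracking of crossings and de Morgan reflections of the type that appeared in the proofs of the preceding lemmata.
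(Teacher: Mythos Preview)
Your approach is sound and closely parallels the paper's, though you organise the argument differently. The paper proves the two directions separately and works throughout with the upper-case $\nuCycle$ via axiom \kprime: for the forward direction it starts from the graphical form of $\stitch[p]$, inserts an instance of $\nuCycle[p,\perp{p}]\circ\nuCycle[\perp{p},p]=\id$ (this is \kprime), expands the two $\nuCycle$'s as braidings-plus-$\balance$-boxes, and simplifies to the displayed composite; for the converse it establishes a short chain of equivalent graphical equations and then shows that the quasibalance condition forces \kimmo\ for $\nucycle$. Your plan instead computes the single endomorphism $\phi_p$ unconditionally by expanding $\nucycle$ (lower case) via its explicit description, and reads off the biconditional at the end. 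Both routes reduce to the same graphical manipulations (naturality slides of the $\balance$-boxes followed by a topological simplification of the braided cups and caps), so the difference is mainly packaging; your version has the minor advantage of proving an identity that holds without hypothesis.

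One point to watch: you assert that the topological remainder is $\stitch[p]$ but then write $\stitch[p]^{-1}$ in the displayed factorisation of $\phi_p$. Exactly one of these must be adjusted, and it matters which---if the remainder were $\stitch[p]$ you would obtain $\phi_p=\id \iff \stitch[p]=A^{-1}$ rather than $\stitch[p]=A$, and $A$ is not self-inverse in general. You already flag orientation as the delicate step, and indeed the paper's chain of equivalences in the converse direction is there precisely to pin down this sign; you should expect to need an analogous check. Beyond that, your outline is correct.
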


Naturally, we shall call a \nt\ $\arrow\balance:\Id[\K]->\Id[\K];$ a 
\emph{{\kq}balance} if it satisfies this condition.
(An equivalent and arguably more elegant condition is that
$\stitch[\perp{p}]=\perp{(\stitch[p])}$ should equal 
\begin{diagram}{ 
  \perp{p} 
  & \perp{p} 
  \ar[l]_-{\perp{(\balance[p])}} 
  & \perp{(\prep{(\perp{p})})} 
  \ar[l]_-{\perp{\canon[p]}} 
  & \perp{(\prep{(\perp{p})})} 
  \ar[l]_-{\perp{(\prep{(\balance[\perp{p}])})}} 
  & \perp{p} 
  \ar@/^/[dl]^-{\id[\perp{p}]}
  \ar[l]_-{\perp{(\canon[p]\inverse)}} 
  \\ 
  && \perp{p} \ar[u]^-{\canon[\perp{p}]} 
  \ar@/^/[ul]^-{\id[\perp{p}]}
  & \perp{p} \ar[u]_-{\canon[\perp{p}]} 
  \ar[l]_-{\balance[\perp{p}]} 
  }
\end{diagram}
for all $p$.) 
\begin{proof}[of Theorem \ref{thm:kqbalance}]
Suppose that $\nuCycle$ is a \kq\cthing; then 
\begin{center}
    \begin{pspicture}[shift=-8](0,-3)(8,15) 
      \psline(0,12)(0,10)(4,6)
      \psline(0,6)(4,2)
      \psdots[linecolor=white](2,8)(2,4)
      \psline(0,2)(4,6)
      \psline(0,6)(4,10)
      \psarc(2,2){2}{180}{0}
      \psarc(6,10){2}{0}{180}
      \psline(8,10)(8,0)
      \uput[90](0,12){$\strut p$} 
      \uput[270](8,0){$\strut p$} 
    \end{pspicture}
    {\quad$\stackrel{\displaystyle\kprime}{=}$\quad}
    \begin{pspicture}[shift=-16](-1.5,-3)(8,31) 
      \psline(4,22)(4,18)(0,14)(0,10)(4,6)
      \psline(0,6)(4,2)
      \psdots[linecolor=white](2,8)(2,4)(2,16)
      \psline(0,2)(4,6)
      \psline(0,6)(4,10)(4,14)(0,18)(0,22)(4,26)
      \psdots[linecolor=white](2,24)
      \psline(4,22)(0,26)(0,28)
      \psarc(2,2){2}{180}{0}
    \psframe[fillstyle=solid,fillcolor=boxcol](-1.5,10.5)(1.5,13.5)
    \rput(0,12){\tiny $\balance[p]$}
    \psframe[fillstyle=solid,fillcolor=boxcol](-1.5,18.5)(1.5,21.5)
    \rput(0,20){\tiny $~\balance[\perp{p}]$}
      \uput[90](0,28){$\strut p$} 
      \psarc(6,26){2}{0}{180}
      \psline(8,26)(8,0)
      \uput[270](8,0){$\strut p$} 
    \end{pspicture}
    {\quad=\quad} 
    \begin{pspicture}[shift=-16](-1.5,-3)(8,31) 
      \psline(4,22)(4,18)(4,6)
      \psline(0,6)(4,2)
      \psdots[linecolor=white](2,4)
      \psline(0,2)(4,6)
      \psline(0,6)(0,18)(0,22)(4,26)
      \psdots[linecolor=white](2,24)
      \psline(4,22)(0,26)(0,28)
      \psarc(2,2){2}{180}{0}
    \psframe[fillstyle=solid,fillcolor=boxcol](2.5,10.5)(5.5,13.5)
    \rput(4,12){\tiny $\balance[p]$}
    \psframe[fillstyle=solid,fillcolor=boxcol](-1.5,18.5)(1.5,21.5)
    \rput(0,20){\tiny $~\balance[\perp{p}]$}
      \uput[90](0,28){$\strut p$} 
      \psarc(6,26){2}{0}{180}
      \psline(8,26)(8,0)
      \uput[270](8,0){$\strut p$} 
    \end{pspicture}
    {\quad=\quad} 
    \begin{pspicture}[shift=-9](-1.5,-3)(5.5,17) 
      \psline(0,2)(0,14)
      \psline(4,2)(4,6)
      \psarc(2,2){2}{180}{0}
    \psframe[fillstyle=solid,fillcolor=boxcol](-1.5,9.5)(1.5,12.5)
    \rput(0,11){\tiny $\balance[p]$}
    \psframe[fillstyle=solid,fillcolor=boxcol](2.5,2.5)(5.5,5.5)
    \rput(4,4){\tiny $~\balance[\perp{p}]$}
      \uput[90](0,14){$\strut p$} 
      \psarc(6,6){2}{0}{180}
      \psline(8,6)(8,0)
      \uput[270](8,0){$\strut p$} 
    \end{pspicture}
\end{center}
as desired.

Conversely, we note that the following are equivalent:
\begin{center}
    \begin{pspicture}[shift=-10](0,-3)(8,15) 
      \psline(0,12)(0,10)(4,6)
      \psline(0,6)(4,2)
      \psdots[linecolor=white](2,8)(2,4)
      \psline(0,2)(4,6)
      \psline(0,6)(4,10)
      \psarc(2,2){2}{180}{0}
      \psarc(6,10){2}{0}{180}
      \psline(8,10)(8,0)
      \uput[90](0,12){$\strut p$} 
      \uput[270](8,0){$\strut p$} 
    \end{pspicture}
    {~=~} 
    \begin{pspicture}[shift=-10](-1.5,-3)(8,15) 
      \psline(0,2)(0,12)
      \psline(4,2)(4,6)
      \psarc(2,2){2}{180}{0}
    \psframe[fillstyle=solid,fillcolor=boxcol](-1.5,8.5)(1.5,11.5)
    \rput(0,10){\tiny $\balance[p]$}
    \psframe[fillstyle=solid,fillcolor=boxcol](2.5,2.5)(5.5,5.5)
    \rput(4,4){\tiny $~\balance[\perp{p}]$}
      \uput[90](0,12){$\strut p$} 
      \psarc(6,6){2}{0}{180}
      \psline(8,6)(8,0)
      \uput[270](8,0){$\strut p$} 
    \end{pspicture}
    {\quad$\iff$\quad}
    \begin{pspicture}[shift=-5](0,0)(4,13) 
      \psline(0,10)(4,6)
      \psline(0,6)(4,2)
      \psdots[linecolor=white](2,8)(2,4)
      \psline(0,2)(4,6)
      \psline(0,6)(4,10)
      \psarc(2,2){2}{180}{0}
      \uput[90](0,10){$\strut p$} 
      \uput[90](4,10){$\strut \perp{p}$} 
    \end{pspicture}
    {~=~} 
    \begin{pspicture}[shift=-5](-1.5,0)(5.5,13) 
      \psline(0,2)(0,10)
      \psline(4,2)(4,10)
      \psarc(2,2){2}{180}{0}
    \psframe[fillstyle=solid,fillcolor=boxcol](-1.5,4.5)(1.5,7.5)
    \rput(0,6){\tiny $\balance[p]$}
    \psframe[fillstyle=solid,fillcolor=boxcol](2.5,4.5)(5.5,7.5)
    \rput(4,6){\tiny $~\balance[\perp{p}]$}
      \uput[90](0,10){$\strut p$} 
      \uput[90](4,10){$\strut \perp{p}$} 
    \end{pspicture}
    {\quad$\iff$\quad}
    \begin{pspicture}[shift=-5](0,0)(4,13) 
      \psline(0,2)(0,10)
      \psline(4,2)(4,10)
      \psarc(2,2){2}{180}{0}
      \uput[90](0,10){$\strut p$} 
      \uput[90](4,10){$\strut \perp{p}$} 
    \end{pspicture}
    {~=~} 
    \begin{pspicture}[shift=-9](0,-4)(4,15) 
      \psline(0,-2)(0,2)(4,6)
      \psline(0,6)(4,10)
      \psdots[linecolor=white](2,8)(2,4)
      \psline(0,10)(4,6)
      \psline(0,6)(4,2)(4,-2)
      \psarc(2,-2){2}{180}{0}
    \psframe[fillstyle=solid,fillcolor=boxcol](-1.5,-1.5)(1.5,1.5)
    \rput(0,0){\tiny $\balance[p]$}
    \psframe[fillstyle=solid,fillcolor=boxcol](2.5,-1.5)(5.5,1.5)
    \rput(4,0){\tiny $~\balance[\perp{p}]$}
      \uput[90](0,10){$\strut p$} 
      \uput[90](4,10){$\strut \perp{p}$} 
    \end{pspicture}
    {\quad$\iff$\quad}
    \begin{pspicture}[shift=-12](0,-7)(0,15) 
      \psline(0,-4)(0,12)
      \uput[270](0,-4){$\strut p$} 
      \uput[90](0,12){$\strut p$} 
    \end{pspicture}
    {~=~} 
    \begin{pspicture}[shift=-12](0,-7)(8,15) 
      \psline(0,-2)(0,2)(4,6)
      \psline(0,6)(4,10)
      \psdots[linecolor=white](2,8)(2,4)
      \psline(0,12)(0,10)(4,6)
      \psline(0,6)(4,2)(4,-2)
      \psarc(2,-2){2}{180}{0}
    \psframe[fillstyle=solid,fillcolor=boxcol](-1.5,-1.5)(1.5,1.5)
    \rput(0,0){\tiny $\balance[p]$}
    \psframe[fillstyle=solid,fillcolor=boxcol](2.5,-1.5)(5.5,1.5)
    \rput(4,0){\tiny $~\balance[\perp{p}]$}
      \uput[90](0,12){$\strut p$} 
      \psarc(6,10){2}{0}{180}
      \psline(8,10)(8,-4)
      \uput[270](8,-4){$\strut p$} 
    \end{pspicture}
\end{center}
Now suppose that $\nubalance$ is a {\kq}balance; then
\begin{center}
\begin{pspicture}[shift=-10](0,-7)(0,29)
\psline(0,-4)(0,26)
\uput[90](0,26){$\strut p$}
\uput[270](0,-4){$\strut p$}
\end{pspicture}
{\quad=\quad}
\begin{pspicture}[shift=-7](0,-4)(24,26)
\psline(8,20)(12,24)
\psline(12,20)(0,8)(0,4)(4,0)
\psline(12,8)(16,4)(16,0)
\psframe[fillstyle=solid,fillcolor=boxcol](14,0.5)(18,3.5)
\rput(16,2){\tiny $\strut\cycle[\perp{p}]$}
\psdots[linecolor=white](2,2)(14,6)(10,18)(10,22)
\psarcn(2,0){2}{0}{180}
\psline(0,0)(4,4)(4,8)
\psframe[fillstyle=solid,fillcolor=boxcol](2.5,4.5)(5.5,7.5)
\rput(4,6){\tiny $\strut\cycle[p]$}
\psarc(6,8){2}{0}{180}
\psline(8,8)(8,-2)
\psarcn(10,-2){2}{0}{180}
\psline(12,-2)(12,4)(16,8)
\psarc(14,8){2}{0}{180}
\psarcn(18,0){2}{0}{180}
\psline(20,0)(20,8)(8,20)
\psline(12,20)(8,24)(8,26)
\psarc(14,24){2}{0}{180}
\psline(16,24)(24,16)(24,-4)
    \psframe[linestyle=dotted](-1.5,-3)(9.5,11)
    \psframe[linestyle=dotted](10.5,-3)(21.5,11)
\uput[90](8,26){$\strut p$}
\uput[270](24,-4){$\strut p$}
\end{pspicture}
{\quad=\quad}
\begin{pspicture}[shift=-7](12,-6)(28,22)
\psline(20,0)(20,8)(12,16)(16,20)
\psline(12,8)(16,4)(16,0)
\psframe[fillstyle=solid,fillcolor=boxcol](14,0.5)(18,3.5)
\rput(16,2){\tiny $\strut\cycle[\perp{p}]$}
\psdots[linecolor=white](14,6)(14,18)
\psline(12,0)(12,4)(16,8)
\psframe[fillstyle=solid,fillcolor=boxcol](10.5,0.5)(13.5,3.5)
\rput(12,2){\tiny $\strut\cycle[p]$}
\psarc(14,8){2}{0}{180}
\psarcn(18,0){2}{0}{180}
\psarcn(18,0){6}{0}{180}
\psline(24,0)(24,8)(12,20)(12,22)
\psarc(18,20){2}{0}{180}
\psline(20,20)(28,12)(28,-6)
\uput[90](12,22){$\strut p$}
\uput[270](28,-6){$\strut p$}
\end{pspicture}
{\quad=\quad}
\begin{pspicture}[shift=-7](16,-4)(24,18)
\psline(16,12)(16,0)
\psframe[fillstyle=solid,fillcolor=boxcol](14,2.5)(18,5.5)
\rput(16,4){\tiny $\strut\cycle[\perp{p}]$}
\psline(20,12)(20,8)
\psframe[fillstyle=solid,fillcolor=boxcol](18.5,8.5)(21.5,11.5)
\rput(20,10){\tiny $\strut\cycle[p]$}
\psarc(18,12){2}{0}{180}
\psarcn(18,0){2}{0}{180}
\psarcn(22,8){2}{0}{180}
\psarc(22,0){2}{0}{180}
\psline(24,0)(24,-4)
\psline(24,8)(24,18)
\uput[90](24,18){$\strut p$}
\uput[270](24,-4){$\strut p$}
\end{pspicture}
{\quad=\quad}
\begin{pspicture}[shift=-7](13.5,-4)(18.5,18)
\psline(16,-4)(16,18)
\psframe[fillstyle=solid,fillcolor=boxcol](14,2.5)(18,5.5)
\rput(16,4){\tiny $\strut\cycle[\perp{p}]$}
\psframe[fillstyle=solid,fillcolor=boxcol](13.5,8.5)(18.5,11.5)
\rput(16,10){\tiny $\perp{(\cycle[p])}$}
\uput[90](16,18){$\strut p$}
\uput[270](16,-4){$\strut p$}
\end{pspicture}
\end{center}
\end{proof}

Finally we note that, since every \cthing\ is a \kq\cthing, every
balance (on a braided \staut\ category) is a {\kq}balance;
from this one derives the following.  

\begin{cor}
A balance $\balance$ for a \staut\ category $\K$ satisfies 
$\balance[\perp{p}]=\perp{(\balance[p])}$ if and only if it satisfies
$\stitch[p]=\balance[p]^2$. 
\end{cor}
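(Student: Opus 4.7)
The plan is to extract the corollary directly from the \kq{}balance identity established in Theorem \ref{thm:kqbalance}, together with the naturality of the cancellation isomorphism $\canon$. By the corollary immediately preceding, every balance $\balance$ on a \brdstaut\ category is automatically a \kq{}balance, so the equation
\[ \stitch[p] = \canon[p]\inverse \of \prep{(\balance[\perp{p}])} \of \canon[p] \of \balance[p] \]
holds at every object $p$. The other ingredient I would exploit is the fact, obtained by instantiating the naturality square of $\canon$ at the morphism $\balance[p] : \arrow:p->p;$, that
\[ \canon[p]\inverse \of \prep{(\perp{(\balance[p])})} \of \canon[p] = \balance[p]. \]

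For the forward direction, assume that $\balance[\perp{p}] = \perp{(\balance[p])}$. Substituting this into the \kq{}balance identity displayed above and applying the naturality formula gives
\[ \stitch[p] = \canon[p]\inverse \of \prep{(\perp{(\balance[p])})} \of \canon[p] \of \balance[p] = \balance[p] \of \balance[p] = \balance[p]^2, \]
which is the required equation.

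For the reverse direction, assume that $\stitch[p] = \balance[p]^2$. Invertibility of $\balance[p]$ allows me to cancel one factor on the right of the \kq{}balance identity, yielding $\balance[p] = \canon[p]\inverse \of \prep{(\balance[\perp{p}])} \of \canon[p]$; conjugating through by $\canon[p]$ and comparing with the naturality formula gives $\prep{(\balance[\perp{p}])} = \prep{(\perp{(\balance[p])})}$. Finally, $\prep{\blank}$ is a contravariant equivalence of categories (with pseudo-inverse $\perp{\blank}$ mediated by $\canon$), hence in particular faithful, so we may cancel it to conclude $\balance[\perp{p}] = \perp{(\balance[p])}$. The argument is pure bookkeeping once Theorem \ref{thm:kqbalance} and the preceding corollary are in hand; I foresee no genuine obstacle, only the need to track which instance of $\canon$ is being inserted where.
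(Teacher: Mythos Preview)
Your proposal is correct and follows exactly the route the paper intends: the paper's only ``proof'' is the sentence preceding the corollary (every balance is a \kq{}balance, whence the identity of Theorem~\ref{thm:kqbalance} applies), and you have simply spelt out the routine manipulation---substitute, use naturality of $\canon$, cancel, and use faithfulness of $\prep{\blank}$---that the paper leaves to the reader. The only inaccuracy is bibliographic: the fact that every balance is a \kq{}balance is not ``the corollary immediately preceding'' but the unnumbered remark in the text just before this corollary.
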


This very important property will be further discussed
in \cite{EggMcC:cld}.

\section{Strictification} \label{sec:zang}

To conclude, we address 
the issue of strictifying negation in \staut\ categories,
in both cyclic and arbitrary cases. 
It seems otiose to rigorously state and prove theorems 
(which would entail, among other things, 
fully written-out definitions of \emph{morphism} of \cycstaut\
categories, and of \emph{two-cell} between such morphisms)   
when the truth of what we assert is so manifest.  
Consequently, we proceed in a slightly less formal fashion than 
heretofore; 
our techniques are simple and obvious extensions of those used
in \cite{CHS}.  

\begin{defs} \label{def:zang}
  Let $\K=\Kexplained$ be an \arbstaut\ category; 
  then, by a \emph{\zang\ of (linear) adjoints} 
  we mean a $\Z$-indexed family of $\K$-objects, 
  $p=\familyof[n\in\Z]{p_n}$, together with $\K$-arrows  
  \begin{diagram}{ 
    \e \ar[r]^-{\tau_n} & p_{n+1} \parr p_n 
    & p_n \tens p_{n+1} \ar[r]^-{\gamma_n} & \d } 
  \end{diagram}
  satisfying the (linear) triangle identities of \cite{CocSee:ldc}; 
  and, by the \emph{canonical} \zang\ of adjoints determined by a
  $\K$-object $p$, we mean the family
  $\zangify{p}=\familyof[n\in\Z]{\zangify{p}_n}$ given by  
  \[ \zangify{p}_n := \left\{ 
  \begin{array}{cl}
    p^{* \cdots *} & \textrm{if $n$ is positive} \\
    p & \textrm{if $n$ is zero} \\
    {}^{* \cdots *}p & \textrm{if $n$ is negative} 
  \end{array}
  \right. \]
  together with the canonical ({chosen}) linear adjunctions between
  $\zangify{p}_n$ and $\zangify{p}_{n+1}$.  

  Similarly, by a \emph{\zang\ of (linear) mates}  
  $\arrow:p=\familyof[n\in\Z]{p_n}->\familyof[n\in\Z]{q_n}=q;$
  we mean a $\Z$-indexed family of $\K$-arrows 
  $\omega=\familyof[n\in\Z]{\omega_n}$ with 
  $\omega_n \in \xhom[\K]{p_n}{q_n}$ if $n$ is even, and 
  $\omega_n \in \xhom[\K]{q_n}{p_n}$ if $n$ is odd, all 
  satisfying the (linear) mateship relations of \cite{CocSee:ldc}; 
  and by the \emph{canonical} \zang\ of mates determined by a
  $\K$-arrow $\omega$, we mean the family 
  $\zangify{\omega}=\familyof[n\in\Z]{\zangify{\omega}_n}$ given by 
  \[ \zangify{\omega}_n := \left\{ 
  \begin{array}{cl}
    \omega^{* \cdots *} & \textrm{if $n$ is positive} \\
    \omega & \textrm{if $n$ is zero} \\
    {}^{* \cdots *}\omega & \textrm{if $n$ is negative} 
  \end{array}
  \right. \]

  The category of all {\zang}s of adjoints, with {\zang}s of 
  mates between them, will be denoted $\Zang\K$.
\end{defs}

It is evident that $\Zang\K$ carries a \staut\ structure, given
(in part) by   
\[
\begin{array}{rclrcl}
  (p \tens q)_n &=& \left\{ 
  \begin{array}{ll}
    p_n \tens q_n & \textrm{if $n$ is even} \\
    q_n \parr p_n & \textrm{if $n$ is odd} 
  \end{array}
  \right. 
  & (p \parr q)_n &=& \left\{ 
  \begin{array}{ll}
    p_n \parr q_n & \textrm{if $n$ is even} \\
    q_n \tens p_n & \textrm{if $n$ is odd} 
  \end{array}
  \right. 
  \\ \strut \\ 
  \e_n &=& \left\{ 
  \begin{array}{ll}
    \e & \textrm{if $n$ is even} \\
    \d & \textrm{if $n$ is odd} 
  \end{array}
  \right. 
  & \d_n &=& \left\{ 
  \begin{array}{ll}
    \d & \textrm{if $n$ is even} \\
    \e & \textrm{if $n$ is odd} 
  \end{array}
  \right.
  \\ \strut \\
  (\perp{p})_n &=& p_{n+1}
  & (\prep{p})_n &=& p_{n-1} 
\end{array}
\]
and that this has \emph{strict negations} in the sense described in
section \ref{sec:intr}.   
Moreover, $\zangify{\blank}$ and  $\blank_0$ define an (adjoint)
equivalence of \staut\ categories between $\K$ and $\Zang\K$.     
Hence every \staut\ category is equivalent to one with strict
negations. 
It follows that any \cthing\ $\cycle$ on $\K$ can be extended to a
\cthing\ $\zangcycle$ on $\Zang\K$;
explicitly, 
\begin{diagram}{ 
(\perp{p})_n = p_{n+1} \ar[r]^-{\sim}
& \perp{(p_n)} \ar[r]^-{\cycle[p_n]} 
& \prep{(p_n)} \ar[r]^-{\sim}
& p_{n-1} = (\prep{p})_n } 
\end{diagram}
is the $n$th component of $\zangcycle_p$.

(Note also that the two monoidal structures of $\Zang\K$ are strict
if and only if the same is true of $\K$.  
Hence, to produce a fully strict \staut\ category equivalent to $\K$, 
one could first strictify its \ld\ structure, and then apply the
$\Zang-$ construction.) 

\begin{defn} \label{def:fang}
  Let $(\K,\cycle)$ be a \cycstaut\ category; 
  then, by a \emph{\fang\ of (linear) adjoints}, we mean a \zang\ of
  adjoints $p=\familyof[n\in\Z]{p_n}$ satisfying  
  \[ p_{n+1}=p_{n-1} \qquad \textrm{and} \qquad 
  \gamma_{n}=\Cycle[p_{n-1},p_{n}](\gamma_{n-1}) \]
  for all $n \in \Z$.  
  The full subcategory of $\Zang\K$ determined by the {\fang}s will be 
  denoted $\Fang\K$.   
\end{defn}

\begin{lem} \label{lem:zang}
  $\Fang\K$ is a sub-\staut\ category of $\Zang\K$---that
  is, the class of {\fang}s is closed under $\tens$, $\parr$, $\e$,
  $\d$, $\perp\blank$ and $\prep\blank$.
  Moreover, the restriction of $\zangcycle$ to $\Fang\K$ is the
  identity. 
\end{lem}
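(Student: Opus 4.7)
The plan is to verify the two fang conditions are preserved under each of the six operations, and then to read off the identity claim directly from the fang definition combined with the $\rCurry/\lCurry$ correspondence of Remark \ref{rem:upperlowercase}.

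The first fang condition, $p_{n+1}=p_{n-1}$, is manifestly preserved by every operation: $\perp\blank$ and $\prep\blank$ are mere reindexings, and the alternating formulas for $(p\tens q)_n$, $(p\parr q)_n$, $\e_n$, $\d_n$ depend only on the parity of $n$. So all the substance lies in verifying that $\gamma_n=\Cycle[p_{n-1},p_n](\gamma_{n-1})$ is preserved. I would proceed case by case. For $\e$ and $\d$, the counits $\gamma_n$ reduce to unit constraints $\lulaw,\rulaw$, and the required equation becomes exactly the nullary axioms $\tnul$ and $\pnul$, both of which hold by Lemma \ref{lem:mbm} since $\cycle$ is a cycle. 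For $\perp p$ and $\prep p$, the $\gamma$'s and $\tau$'s are the $\tau$'s and $\gamma$'s of $p$ shifted by one, so the fang condition for $p$ transports directly (using $\kimmo$ where needed to match up shifted counits).

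The main case is $\tens$ (the $\parr$ case is dual). Here the counit for $(p\tens q)_n$ is built from $\gamma_n^p$ and $\gamma_n^q$ via associators and linear distributions in the usual way, so that the standard identification $p_{n+1}\tens q_{n+1}\iso\perp(p_n\tens q_n)$ holds. The equation $\gamma_n^{p\tens q}=\Cycle[(p\tens q)_{n-1},(p\tens q)_n](\gamma_{n-1}^{p\tens q})$ thus amounts to showing that $\Cycle$ applied to a counit for a tensor product equals the counit for the tensor product assembled from the individually transported counits. This is precisely the content of the $\parr$-free form of \tbin\ derived in the proof of Lemma \ref{lem:mbm}, combined with naturality of $\Cycle$ and the linear distributions (equivalently, one can invoke \varmtwo\ from Lemma \ref{lem:jme} applied to $(\gamma_{n-1}^p,\gamma_{n-1}^q)$).

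For the identity claim, recall from Definition \ref{def:zang} that the unnamed $\sim$ arrows $p_{n+1}\isoto\perp(p_n)$ and $\prep(p_n)\isoto p_{n-1}$ are the unique isomorphisms of duals determined by $\gamma_n$ and $\gamma_{n-1}$ respectively. Translating via Remark \ref{rem:upperlowercase}, the composite $p_{n+1}\to\perp(p_n)\xrightarrow{\cycle[p_n]}\prep(p_n)\to p_{n-1}$ is the identity precisely when $\gamma_n$ and $\Cycle[p_{n-1},p_n](\gamma_{n-1})$ agree as maps $p_n\tens p_{n+1}\to\d$---which is exactly the second fang condition. Since for a fang $p_{n+1}=p_{n-1}$, the $n$th component of $\zangcycle_p$ is therefore $\id[p_{n+1}]$, as required.

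I expect the principal obstacle to be the tensor closure step, where unpacking $\Cycle[p_{n-1}\tens q_{n-1},p_n\tens q_n]$ of a composite counit into a composite of $\Cycle$'s of the factor counits requires carefully threading naturality, the linear triangle identities, and \tbin\ through a diagram that mixes $\tens$ and $\parr$; the proof of Lemma \ref{lem:jme} shows this is routine once $\tbin$ is put in its $\parr$-free form, but bookkeeping the associators and distributions is the one place where actual calculation, rather than mere invocation of the fang definition, is required.
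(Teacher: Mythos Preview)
Your approach is correct and matches the paper's strategy closely. The one refinement worth noting concerns the tensor case: the counits $\gamma_n^{p\tens q}$ alternate in form between $\lbind(\gamma^p,\gamma^q)$ (when $n$ is odd, since $(p\tens q)_n = q_n\parr p_n$) and $\rbind(\gamma^p,\gamma^q)$ (when $n$ is even). Consequently the equation $\gamma_n = \Cycle(\gamma_{n-1})$ for $p\tens q$ requires \emph{both} $\mtwo$ (to pass from an $\lbind$ to an $\rbind$, i.e.\ the case $n=2m$) and $\varmtwo$ (for $n=2m+1$), or one of these together with $\kprime$. Your invocation of only $\varmtwo$/$\tbin$ covers one parity; the paper explicitly uses $\mtwo$ for the even case and cites $\varmtwo$ or $\kprime$ for the odd case. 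Since $\cycle$ is a full cycle this is a cosmetic matter---all the axioms hold---but your write-up should make the $\lbind$/$\rbind$ alternation explicit, as that is what forces both axioms into play. Your treatment of the identity claim via Remark~\ref{rem:upperlowercase} is exactly what the paper's graphical argument encodes.
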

\begin{proof}
  For the first statement, we prove only that if $p$ and $q$ are
  {\fang}s, then so is $p \tens q$.  
  It is trivial that $(p \tens q)_{n+1}=(p \tens q)_{n-1}$, 
  but it is non-trivial that the other condition still holds.  
  The full definition of $p \tens q$ (which was only partially 
  described above) includes the following.  
  \begin{diagram}{ 
    (p \tens q)_{2m-1} \tens (p \tens q)_{2m} 
    \ar@<2pc>[dd]^-{\id} 
    \ar@<-2pc>[d]_-{\strut\gamma_{2m-1}}^-{:=\strut} 
    & (p \tens q)_{2m} \tens (p \tens q)_{2m+1} 
    \ar@<2pc>[dd]^-{\id} 
    \ar@<-2pc>[d]_-{\strut\gamma_{2m}}^-{:=\strut} 
    \\ {\d\hspace{4pc}} & {\d\hspace{4pc}} 
    \\ (q_{2m-1} \parr p_{2m-1}) \tens (p_{2m} \tens q_{2m}) 
    \ar@<2pc>[u]^-{\lbind(\gamma_{2m-1},\gamma_{2m-1})} 
    & (p_{2m} \tens q_{2m}) \tens (q_{2m+1} \parr p_{2m+1}) 
    \ar@<2pc>[u]^-{\rbind(\gamma_{2m},\gamma_{2m})} }
  \end{diagram} 
  Hence, \mtwo\ entails 
  \begin{eqnarray*}
    \gamma_{2m} &=& \rbind(\gamma_{2m},\gamma_{2m}) 
    \\ &=& \rbind(\Cycle[p_{2m-1},p_{2m}](\gamma_{2m-1}),
    \Cycle[q_{2m-1},q_{2m}](\gamma_{2m-1})) 
    \\ &=& \Cycle[q_{2m-1} \parr p_{2m-1},p_{2m} \tens q_{2m}]
    (\lbind(\gamma_{2m-1},\gamma_{2m-1})) 
    \\ &=& \Cycle[(p \tens q)_{2m-1},(p \tens q)_{2m}](\gamma_{2m-1}) 
  \end{eqnarray*}
  which covers the case $n=2m$;
  the case $n=2m+1$ follows by a symmetric argument
  involving \varmtwo\ or, alternatively, by 
  invoking \kprime.  

  The second statement is proven as follows.
  \begin{center}
  \begin{pspicture}[shift=-11](0,-3)(8,19) 
    \psarcn(2,2){2}{0}{180}
    \psarc(2,14){2}{0}{180}
    \psline(0,2)(0,14)
    \psline(4,2)(6,4)(8,2)(8,0)  
    \psline(4,14)(6,12)(8,14)(8,16)  
    \psframe[fillstyle=solid,fillcolor=boxcol](4.5,10.5)(7.5,13.5)
    \rput(6,12){$\gamma$} 
    \psframe[fillstyle=solid,fillcolor=boxcol](4.5,2.5)(7.5,5.5)
    \rput(6,4){$\tau$}    
    \psframe[fillstyle=solid,fillcolor=boxcol](-1.5,6.5)(1.5,9.5)
    \rput(0,8){$\cycle$}  
    \uput[90](8,16){$\strut p_{n+1}$}
    \uput[270](8,0){$\strut p_{n-1}$}
  \end{pspicture}
  {~=~} 
  \begin{pspicture}[shift=-8](-2,-2)(16,14) 
    \psarcn(6,4){6}{0}{180}
    \psarc(2,8){2}{0}{180}
    \psline(0,4)(0,8)
    \psline(4,8)(6,6)(8,8)(8,14)  
    \psline(12,4)(12,10)(14,12)(16,10)(16,-2)
    \psframe[fillstyle=solid,fillcolor=boxcol](4.5,4.5)(7.5,7.5)
    \rput(6,6){$\gamma$} 
    \psframe[fillstyle=solid,fillcolor=boxcol](12.5,10.5)(15.5,13.5)
    \rput(14,12){$\tau$} 
    \psframe[fillstyle=solid,fillcolor=boxcol](-1.5,4.5)(1.5,7.5)
    \rput(0,6){$\cycle$} 
    \uput[90](8,14){$\strut p_{n+1}$}
    \uput[270](16,-2){$\strut p_{n-1}$}
    \psframe[linestyle=dotted](-2,-3)(13,11)
  \end{pspicture}
  {~=~} 
  \begin{pspicture}[shift=-8](4,1)(16,17) 
    \psline(4,17)(4,12)(8,8)(12,12)(14,14)(16,12)(16,1)
    \psframe[fillstyle=solid,fillcolor=boxcol](4.5,6.5)(11.5,11.5)
    \rput(8,9){$\Cycle(\gamma)$} 
    \psframe[fillstyle=solid,fillcolor=boxcol](12.5,12.5)(15.5,15.5)
    \rput(14,14){$\tau$} 
    \uput[90](4,17){$\strut p_{n+1}$}
    \uput[270](16,1){$\strut p_{n-1}$}
  \end{pspicture}
  {~=~} 
  \begin{pspicture}[shift=-8](8,2)(16,18) 
    \psline(8,18)(8,12)(10,10)(12,12)(14,14)(16,12)(16,2)
    \psframe[fillstyle=solid,fillcolor=boxcol](8.5,8.5)(11.5,11.5)
    \rput(10,10){$\gamma$} 
    \psframe[fillstyle=solid,fillcolor=boxcol](12.5,12.5)(15.5,15.5)
    \rput(14,14){$\tau$}   
    \uput[90](8,18){$\strut p_{n-1}$}
    \uput[270](16,2){$\strut p_{n-1}$}
  \end{pspicture}
  {~=~} 
  \begin{pspicture}[shift=-8](-2,0)(2,16)
    \psline(0,0)(0,16)
    \uput[90](0,16){$\strut p_{n-1}$}
    \uput[270](0,0){$\strut p_{n-1}$}
  \end{pspicture}
  \end{center}
\end{proof}

Hence, every \cycstaut\ category is equivalent to one which has 
\emph{a strict negation} in the sense described in
section \ref{sec:intr}.   

\appendix 
\section{Miscellaneous proofs} 
\psset{unit=3pt}

\begin{lem} \label{appx:snis}
Let $(M,\mu,\unmu)$ and $(N,\nu,\unnu)$ be strong monoidal functors
$\arrow:(\J,{\jnewt},\j)->(\K,{\knewt},\k);$, 
and $\omega$ be a natural isomorphism 
$\arrow:M->N;$ 
satsifying 
\begin{diagram}{
  M(p) \knewt M(q) \ar[r]^-{\mu} 
  \ar[d]_-{\omega_p\knewt\omega_q}
  & M(p \jnewt q) 
  \ar[d]^-{\omega_{p \jnewt q}}
  \\ N(p) \knewt N(q) \ar[r]^-{\nu} 
  & N(p \jnewt q) }
\end{diagram}
---then 
\begin{inline}{ 
  \k \ar[r]^-{\unmu} \fixwonky{d}{rr}{_-{\unnu}}
  & M(\j) \ar[r]^-{\omega_\j} & N(\j) }
\end{inline}
also holds.
\end{lem}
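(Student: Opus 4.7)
The strategy is to exhibit $\xi := \omega_\j \of \unmu$ as a second candidate for the unit of $N$'s strong monoidal structure, and then use the invertibility of $\unnu$ to conclude $\xi = \unnu$.

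The first step is to specialise the hypothesised square to $p = q = \j$, pre-compose with $\id[M(\j)] \knewt \unmu$, and use the right unit axiom of $M$ (namely, $\rulaw[M(\j)] = M(\rulaw[\j]) \of \mu_{\j,\j} \of (\id \knewt \unmu)$) to simplify the left-hand side. Naturality of $\omega$ at $\rulaw[\j]$ and of $\rulaw$ at $\omega_\j$, together with bifunctoriality applied on the right, yield
\[ \rulaw[N(\j)] \of (\omega_\j \knewt \id[\k]) = N(\rulaw[\j]) \of \nu_{\j,\j} \of (\id[N(\j)] \knewt \xi) \of (\omega_\j \knewt \id[\k]); \]
cancelling the isomorphism $\omega_\j \knewt \id[\k]$ shows that $\xi$ satisfies the same right unit relation as $\unnu$. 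Comparing this with the actual right unit axiom for $N$ and cancelling the isomorphism $N(\rulaw[\j]) \of \nu_{\j,\j}$ reduces the goal to $\id[N(\j)] \knewt \xi = \id[N(\j)] \knewt \unnu$.

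The remaining step---extracting $\xi = \unnu$ from this identity---is the subtlest, since the endofunctor $N(\j) \knewt \blank$ is not a priori faithful; the cancellation relies essentially on the fact that $\unnu$ is an isomorphism. Concretely, I would post-compose with $\lulaw[N(\j)] \of (\unnu\inverse \knewt \id[N(\j)])$ and then use bifunctoriality, the naturality of $\lulaw$, the standard coherence identity $\lulaw[\k] = \rulaw[\k]$, and the naturality of $\rulaw$. After these manipulations both sides reduce to morphisms of the form $h \of \rulaw[N(\j)]$; cancelling the isos $\rulaw[N(\j)]$ and $\unnu$ then yields $\xi = \unnu$, as desired. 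I expect the main obstacle to be bookkeeping with the several unitor isomorphisms; no conceptual difficulty arises once one recognises that the argument is driven entirely by the unit axiom of a strong monoidal functor together with the invertibility of $\unnu$.
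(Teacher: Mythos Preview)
Your argument is correct. Step~1 is routine: specialising the semigroupal square to $p=q=\j$, precomposing with $\id\knewt\unmu$, and invoking the right-unit axiom for $M$ together with naturality of $\omega$ and of $\rulaw$ does indeed yield $\rulaw[N(\j)]=N(\rulaw[\j])\of\nu_{\j,\j}\of(\id[N(\j)]\knewt\xi)$, whence $\id[N(\j)]\knewt\xi=\id[N(\j)]\knewt\unnu$ after comparison with $N$'s unit axiom. Step~2 is the only place one has to be careful, and you handle it correctly: postcomposing with $\lulaw[N(\j)]\of(\unnu\inverse\knewt\id)$, then using bifunctoriality, naturality of $\lulaw$, the identity $\lulaw[\k]=\rulaw[\k]$, and naturality of $\rulaw$ converts each side to $(-)\of\unnu\inverse\of\rulaw[N(\j)]$, from which $\xi=\unnu$ follows by cancelling isomorphisms.

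The paper's proof is organised quite differently. Rather than showing that $\xi$ satisfies $N$'s unit axiom and then cancelling, it considers the endomorphism $\unmu\fo\omega_\j\fo\unnu\inverse$ of $\k$ and argues that it is an invertible idempotent, hence the identity. The idempotence is established via the Eckmann--Hilton argument: it suffices to check idempotence with respect to $\knewt$ (under the identification $\lulaw[\k]=\rulaw[\k]:\k\knewt\k\to\k$), and this in turn follows from a single large diagram combining naturality squares, the hypothesised semigroupal square, and the unit coherence of $M$ and $N$. Your approach is more hands-on and avoids the Eckmann--Hilton detour, at the cost of the slightly delicate cancellation in Step~2; the paper's approach packages the same ingredients into a single conceptual move, but the underlying diagram chase is of comparable length. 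Both arguments rely essentially on the invertibility of $\unnu$ (equivalently, of the unit constraint of a \emph{strong} monoidal functor).
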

\begin{proof}

It suffices to show that $\unmu;\omega_j;\unnu\inverse$ is idempotent
\wrt\ composition (since it is already known to be invertible).
But, by the Eckmann-Hilton argument, it is equivalent to show that it
is idempotent \wrt\ tensor (which is to say, in the arrow category,  
\[ (\unmu;\omega_j;\unnu\inverse)\knewt(\unmu;\omega_j;\unnu\inverse)
\iso (\unmu;\omega_j;\unnu\inverse)\] 
via the canonical isomorphism $\arrow\lulaw=\rulaw:\k\knewt\k->\k;$).  
This is demonstrated in the diagram below: 
\begin{diagram}{
    \k \knewt \k 
    \ar[d]_-{\lulaw = \rulaw}
    \ar[r]^-{\id \knewt \unmu}
    \fixwonky{u}{rr}{^-{\unmu \knewt \unmu}}
    \ar@{}[dr]|-{\naty}
    & \k \knewt M(\j) 
    \ar[d]^-{\lulaw}
    \ar[r]^-{\unmu \knewt \id}
    & M(\j) \knewt M(\j)
    \ar[d]^-{\mu}
    \wonky{r}{ddd}{^-{\omega_j \knewt \omega_j}}
    \\ \k 
    \ar[r]_-{\unmu} 
    & M(\j) 
    \ar[d]_-{\omega_j}
    \ar@{}[dr]|-{\naty}
    & M(\j \jnewt \j) 
    \ar[d]^-{\omega_{j \jnewt j}}
    \ar[l]|-{M(\rulaw=\lulaw)}
    \\ \k 
    \ar[d]_-{\lulaw\inverse = \rulaw\inverse}
    & N(\j) 
    \ar[d]^-{\rulaw\inverse}
    \ar[l]_-{\unnu\inverse}
    \ar@{}[dl]|-{\naty}
    & N(\j \knewt \j)
    \ar[d]^-{\nu\inverse}
    \ar[l]|-{N(\rulaw=\lulaw)}
    \\ \k \knewt \k 
    & N(\j) \knewt \k
    \ar[l]^-{\unnu\inverse \knewt \id}
    & N(\j) \jnewt N(\j) 
    \ar[l]^-{\id \knewt \unnu\inverse}
    \fixwonky{d}{ll}{^-{\unnu\inverse \knewt \unnu\inverse}} }
\end{diagram}
the cells labelled $\naty$ are naturality squares; 
the rightmost cell is a special case of the hypothesis; 
the topmost and bottommost cells are trivial; 
the remaining two squares are special cases of the monoidality of $M$
and $N$; 
the outermost cell is the thing being proven. 
\end{proof}


\def\vomega{\omega'}

\begin{lem} \label{appx:base}
  In any \ld\ category, 
  \[ \assoc[q,s \parr t,p]\inverse\fo
      (\llcad[q,s,t]\fo\psi\parr\id[t]\fo\lulaw[t])\tens\id[p]\fo
      \vomega 
      = \id[q]\tens(\rrcad[s,t,p]\fo\id[s]\parr\vomega 
      \fo\rulaw[s])\fo\psi \]
  holds for all $\arrow\psi:q \tens s->\d;$ and 
  $\arrow\vomega:t \tens p->\d;$. 
\end{lem}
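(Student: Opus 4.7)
The plan is to identify both sides of the stated equation with a single common composite
\[
q \tens ((s \parr t) \tens p) \stackrel{\Theta}{\to} (q \tens s) \parr (t \tens p) \stackrel{\psi \parr \vomega}{\to} \d \parr \d \stackrel{\lulaw[\d]}{\to} \d,
\]
where $\Theta$ is the canonical comparison morphism obtained by applying both linear distributivities.  The crucial observation is that $\Theta$ admits two natural presentations---namely $\llcad[q,s,t \tens p] \fo (\id[q] \tens \rrcad[s,t,p])$ and $\rrcad[q \tens s,t,p] \fo (\llcad[q,s,t] \tens \id[p]) \fo \assoc[q,s \parr t,p]\inverse$---and that these coincide by one of the coherence axioms for linearly distributive categories (see \cite{CocSee:ldc}).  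I would use the first presentation to handle the right-hand side of the lemma, and the second to handle the left-hand side.

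For the right-hand side, first invoke the unit-coherence identity $\id[q] \tens \rulaw[s] = \llcad[q,s,\d] \fo \rulaw[q \tens s]$ to replace the innermost unit application; next, use naturality of $\llcad$ in its third argument to commute $\id[s] \parr \vomega$ past $\llcad$; then use naturality of $\rulaw$ at $\psi$ to rewrite $\rulaw[q \tens s] \fo \psi$ as $(\psi \parr \id[\d]) \fo \rulaw[\d]$; finally, the bifunctoriality of $\parr$ combines $(\psi \parr \id[\d]) \fo (\id[q \tens s] \parr \vomega)$ into $\psi \parr \vomega$.  The resulting form is exactly $\Theta \fo (\psi \parr \vomega) \fo \rulaw[\d]$.

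For the left-hand side, the mirror calculation applies: use the dual unit-coherence $\lulaw[t] \tens \id[p] = \rrcad[\d,t,p] \fo \lulaw[t \tens p]$, then naturality of $\lulaw$ at $\vomega$ followed by naturality of $\rrcad$ applied to $\psi \parr \id[t]$, and finally bifunctoriality, to produce $\Theta \fo (\psi \parr \vomega) \fo \lulaw[\d]$.  The identification $\rulaw[\d] = \lulaw[\d] : \d \parr \d \to \d$---a standard consequence of the unit axioms for $(\K,\parr,\d)$, exactly as $\lambda_I = \rho_I$ holds in a monoidal category---completes the proof.  The only real obstacle is the opening coherence step identifying the two presentations of $\Theta$; everything else is routine bookkeeping with naturality squares and the unit-coherence identities that accompany the linear distributions.
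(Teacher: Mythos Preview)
Your proposal is correct and follows essentially the same route as the paper: the paper packages exactly your ingredients---the pentagon coherence relating the two presentations of $\Theta$, the two unit-triangle axioms $\lulaw[t]\tens\id[p]=\rrcad[\d,t,p]\fo\lulaw[t\tens p]$ and $\id[q]\tens\rulaw[s]=\llcad[q,s,\d]\fo\rulaw[q\tens s]$, the naturality squares for $\llcad$, $\rrcad$, $\lulaw$, $\rulaw$, the bifunctoriality of $\parr$, and the identification $\lulaw[\d]=\rulaw[\d]$---into a single commutative diagram, whereas you unpack the two outer paths equationally and meet at $\Theta\fo(\psi\parr\vomega)\fo\lulaw[\d]$. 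One small notational slip: in your two presentations of $\Theta$ you appear to use $\fo$ in the ordinary (right-to-left) order rather than the paper's diagrammatic convention; the intended composites are $(\id[q]\tens\rrcad[s,t,p])\fo\llcad[q,s,t\tens p]$ and $\assoc[q,s\parr t,p]\inverse\fo(\llcad[q,s,t]\tens\id[p])\fo\rrcad[q\tens s,t,p]$.
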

\begin{proof}
In the diagram below: 
the pentagon and the two outermost triangles are among the axioms of a 
\ld\ category;  
the central triangles are tautologies; 
the squares are all naturality squares.  
\begin{diagram}@C=1pc@R=3pc{ 
& (q \tens (s \parr t)) \tens p 
\ar[rr]^-{\assoc[q,(s \parr t),p]}
\ar[d]_-{\llcad[q,s,t]\tens\id[p]}
&& q \tens ((s \parr t) \tens p)
\ar[d]^-{\id[q]\tens\rrcad[s,t,p]} 
\\ & ((q \tens s) \parr t) \tens p 
\ar[dl]_-{{(\psi\parr\id[t])\tens\id[p]}~~}
\ar[dr]^-{\rrcad[(q \tens s),t,p]}
&& q \tens (s \parr (t \tens p))
\ar[dl]_-{\llcad[q,s,(t \tens p)]}
\ar[dr]^-{{~\id[q]\tens(\id[s]\parr\vomega)}}
\\ (\d \parr t) \tens p 
\ar[dr]^-{\rrcad[\d,t,p]}
\ar[dd]_-{\lulaw[t]\tens\id[p]}
&& (q \tens s) \parr (t \tens p) 
\ar[dl]_-{{\psi\parr\id[t \tens p]~}} 
\ar[dd]|-{\psi\parr\vomega}
\ar[dr]^-{\id[q \tens s]\parr\vomega}
&& q \tens (s \parr \d) 
\ar[dl]_-{\llcad[q,s,\d]}
\ar[dd]^-{{~\id[q]\tens\rulaw[s]}}
\\ & \d \parr (t \tens p)
\ar[dr]_-{{\id[\d]\parr\vomega}~}
\ar[dl]^-{\lulaw[t \tens p]}
&& (q \tens s) \parr \d 
\ar[dl]^-{\psi\parr\id[\d]}
\ar[dr]_-{\rulaw[q \tens s]}
\\ t \tens p 
\ar[drr]_-{\vomega}
&& \d \parr \d 
\ar[d]|-{\lulaw[\d]=\rulaw[\d]}
&& q \tens s
\ar[dll]^-{\psi}
\\ && \d }
\end{diagram}
\end{proof}

\begin{lem} \label{appx:strings} 
  The two arrows
  $\arrow:\perp{(\xhom[f]rq)}\tens\xhom[\a]rs->\perp{(\xhom[f]sq)};$  
  described in the proof of Theorem \ref{thm:main} are equal.
\end{lem}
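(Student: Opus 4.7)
The plan is to reduce the equality to a single application of axiom \tbin. I would express each of the two arrows as a composite in $\V$ built from three ingredients: the left $\a$-action $\alpha\colon \xhom[\a]rs\tens\xhom[f]sq\to\xhom[f]rq$ of $f$, components of the cyclicity $\cycle$, and the canonical contraposition and de Morgan isomorphisms. The essential difference between the two descriptions is which components of $\cycle$ appear: approach 1 uses $\cycle$ applied to the $f$-hom objects $\xhom[f]rq$ and $\xhom[f]sq$ (via the outer $\cycle\tens\id$ and $\cycle\inverse$), whereas approach 2, after deCurrying, uses $\cycle$ applied to the $\a$-hom object $\xhom[\a]rs$ (via the explicit twist in the middle of the displayed composite).

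First I would unfold each construction into this common shape. For approach 1, the right $\a$-action on $\prep{f}$ sitting in the middle is itself obtained from $\alpha$ by contraposition, so the entire composite factors through a single use of the contraposed $\alpha$ together with $\cycle[\xhom[f]rq]$ and $\cycle[\xhom[f]sq]\inverse$. For approach 2, I would deCurry the displayed composite and rewrite the result as $\perp{\alpha}$ followed by the de Morgan isomorphism on $\perp{(\xhom[\a]rs\tens\xhom[f]sq)}$, twisted by a component of $\cycle$ on $\xhom[\a]rs$, and returned to the tensor form via the canonical identification $\perp{z}\parr\prep{y}\iso\perp{z}\llol y$. Both arrows will then share the same contraposition-of-$\alpha$ backbone, and differ only in where the $\cycle$-components are inserted.

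The remaining discrepancy is precisely what \tbin\ governs. Using the $\parr$-free form derived in the proof of Lemma~\ref{lem:mbm}---which identifies $\cycle[p\tens q]$ with the contraposed composite of $\cycle[p]$ and $\cycle[q]$---and specialising to $p=\xhom[\a]rs$ and $q=\xhom[f]sq$, the ``$\cycle$ on $\a$'' of approach 2 is converted into the pair of ``$\cycle$s on $f$-homs'' of approach 1, up to coherence that holds tautologously in any \staut\ category. The main obstacle is presentational rather than conceptual: one must choose a common normal form in which \tbin\ can be applied directly, so that the surrounding structural isomorphisms match exactly. Once this normalization is carried out, the equality follows from a single diagram chase invoking \tbin, the naturality of $\cycle$, and the coherence of the underlying \staut\ structure.
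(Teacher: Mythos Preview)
Your proposal is correct and follows essentially the same route as the paper: both arguments reduce the equality to a single application of \tbin\ together with the naturality of $\cycle$, after first rewriting each arrow so that the contraposed action $\perp\alpha$ (with $\alpha\colon a\tens x\to y$, $a=\xhom[\a]rs$, $x=\xhom[f]sq$, $y=\xhom[f]rq$) is the common backbone. The paper carries out the reformulation semi-graphically---converting the desired equality into one between two arrows $\perp{y}\to\prep{x}\parr\prep{a}$, then observing that these differ precisely by naturality of $\cycle$ followed by \tbin\ at $a\tens x$---whereas you describe the same normalisation algebraically; but the key steps, and even the specialisation of \tbin\ to $p=\xhom[\a]rs$, $q=\xhom[f]sq$, coincide.
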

\begin{proof}
  We argue (semi)graphically: the two arrows in question are 
  \begin{center}
  \begin{pspicture}[shift=-8](-10,-8)(10,6)
    \uput[90](-8,6){$\strut\perp{y}$}
    \psline(-8,6)(-8,-4) 
    \psarc(-4,-4){4}{180}{0} 
    \psline(0,0)(0,-4) 
    \uput[90](-4,6){$\strut a$}
    \psline(-4,6)(-4,4)(0,0)(4,4)
    \psarc(6,4){2}{0}{180}
    \psline(8,4)(8,-8)
    \uput[270](8,-8){$\strut\perp{x}$}
    \psframe[fillstyle=solid,fillcolor=boxcol](-3,-3)(3,3)
    \rput(0,0){$\strut\alpha$}
    \psframe[fillstyle=solid,fillcolor=boxcol](-11,-3)(-5,3)
    \rput(-8,0){$\strut\cycle[y]$}
    \psframe[fillstyle=solid,fillcolor=boxcol](4,-3)(12,3)
    \rput(8,0){$\strut\cycle[x]\inverse$}
  \end{pspicture} 
  \qquad and \qquad 
  \begin{pspicture}[shift=-19](-16,-19)(16,17)
    \uput[90](8,14){$\strut \perp{y}$}
    \psline(8,14)(8,-4)
    \psarc(4,-4){4}{180}{0} 
    \psline(0,0)(0,-4) 
    \psline(-4,4)(0,0)(4,4)
    \psarc(-6,4){10}{0}{180}
    \psline(-16,4)(-16,-16)
    \uput[270](-16,-16){$\strut\perp{x}$}
    \psarc(-6,4){2}{0}{180}
    \psline(-8,4)(-8,-4)
    \psarc(4,-4){12}{180}{0} 
    \uput[90](16,14){$\strut a$}
    \psline(16,14)(16,-4)
    \psframe[fillstyle=solid,fillcolor=boxcol](-3,-3)(3,3)
    \rput(0,0){$\strut\alpha$}
    \psframe[fillstyle=solid,fillcolor=boxcol](-11,-3)(-5,3)
    \rput(-8,0){$\strut\cycle[a]$}
  \end{pspicture}
  \end{center}
  where $a=\xhom[\a]rs$, $x=\xhom[f]sq$, $y=\xhom[f]rq$, 
  $\alpha$ denotes the original action $\arrow:a \tens x->y;$.

  Clearly, these are equal if and only if 
  \begin{center}
  \begin{pspicture}[shift=-8](-10,-8)(16,14)
    \uput[90](-8,14){$\strut\perp{y}$}
    \psline(-8,14)(-8,-4) 
    \psarc(-4,-4){4}{180}{0} 
    \psline(0,0)(0,-4) 
    \uput[270](16,-8){$\strut\prep{a}$}
    \psline(16,4)(16,-8)
    \psarc(6,4){10}{0}{180}
    \psline(-4,6)(-4,4)(0,0)(4,4)
    \psarc(6,4){2}{0}{180}
    \psline(8,4)(8,-8)
    \uput[270](8,-8){$\strut\prep{x}$}
    \psframe[fillstyle=solid,fillcolor=boxcol](-3,-3)(3,3)
    \rput(0,0){$\strut\alpha$}
    \psframe[fillstyle=solid,fillcolor=boxcol](-11,-3)(-5,3)
    \rput(-8,0){$\strut\cycle[y]$}
  \end{pspicture} 
  \qquad = \qquad 
  \begin{pspicture}[shift=-11](-18,-11)(8,17)
    \uput[90](8,14){$\strut \perp{y}$}
    \psline(8,14)(8,-4)
    \psarc(4,-4){4}{180}{0} 
    \psline(0,0)(0,-4) 
    \psline(-4,4)(0,0)(4,4)
    \psarc(-6,4){10}{0}{180}
    \psline(-16,4)(-16,-8)
    \psframe[fillstyle=solid,fillcolor=boxcol](-19,-3)(-13,3)
    \rput(-16,0){$\strut\cycle[x]$}
    \uput[270](-16,-8){$\strut\prep{x}$}
    \psarc(-6,4){2}{0}{180}
    \psline(-8,4)(-8,-8)
    \uput[270](-8,-8){$\strut \prep{a}$}
    \psframe[fillstyle=solid,fillcolor=boxcol](-3,-3)(3,3)
    \rput(0,0){$\strut\alpha$}
    \psframe[fillstyle=solid,fillcolor=boxcol](-11,-3)(-5,3)
    \rput(-8,0){$\strut\cycle[a]$}
  \end{pspicture}
  \end{center}
---but here the left-hand side represents 
\begin{diagram}{ 
  \perp{y} \ar[r]^-{\cycle[y]} 
  & \prep{y} \ar[r]^-{\prep\alpha} 
  & \prep{(a \tens x)} \ar[r]^-{\demorgan} 
  & \prep{x} \parr \prep{a} }
\end{diagram}
which, by the naturality of $\cycle$, equals 
\begin{diagram}{ 
  \perp{y} \ar[r]^-{\prep\alpha} 
  & \perp{(a \tens x)} \ar[r]^-{\cycle[a \tens x]} 
  & \prep{(a \tens x)} \ar[r]^-{\demorgan} 
  & \prep{x} \parr \prep{a} }
\end{diagram}
which in turn, by \tbin, equals 
\begin{diagram}{ 
  \perp{y} \ar[r]^-{\prep\alpha} 
  & \perp{(a \tens x)} \ar[r]^-{\demorgan} 
  & \perp{x} \parr \perp{a} \ar[r]^-{\cycle[x] \parr \cycle[a]} 
  & \prep{x} \parr \prep{a} }
\end{diagram}
which is what the right-hand side represents.  
\end{proof}




\begin{lem} \label{appx:nplds}
  In any braided \staut\ category, the following diagrams commute. 
  \begin{diagram}@C=9pt{
      & (p \parr q) \tens r 
      \ar[dl]_-{\pbraid[p,q]\inverse\tens\id[r]}
      \ar[dr]^-{\tbraid[p \parr q,r]}
      &&& r \tens (q \parr p) 
      \ar[dl]_-{\tbraid[q \parr p,r]}
      \ar[dr]^-{\id[r]\tens\pbraid[p,q]\inverse}
      \\ (q \parr p) \tens r 
      \ar[ddd]_-{\rrcad[q,p,r]}
      \ar@{<.}[dr]|-{\tbraid[q \parr p,r]\inverse}
      && r \tens (p \parr q) 
      \ar[ddd]^-{\llcad[r,p,q]}
      \ar@{<.}[dl]|-{\id[r]\tens\pbraid[p,q]}
      & (q \parr p) \tens r 
      \ar[ddd]_-{\rrcad[q,p,r]}
      \ar@{<.}[dr]|-{\pbraid[p,q]\tens\id[r]}
      && r \tens (p \parr q) 
      \ar[ddd]^-{\llcad[r,p,q]}
      \ar@{<.}[dl]|-{\tbraid[p \parr q,r]\inverse}
      \\ & r \tens (q \parr p) 
      &&& (p \parr q) \tens r 
      \\ & q \parr (r \tens p)
      \ar@{<.}[dl]|-{\id[q]\parr\tbraid[r,p]} 
      \ar@{<.}[dr]|-{\pbraid[q,r \tens p]\inverse} 
      &&& (p \tens r) \parr q 
      \ar@{<.}[dl]|-{\pbraid[q,p \tens r]\inverse} 
      \ar@{<.}[dr]|-{\tbraid[r,p]\parr\id[q]} 
      \\ q \parr (p \tens r) 
      \ar[dr]_-{\pbraid[q,p \tens r]} 
      && (r \tens p) \parr q 
      \ar[dl]^-{\tbraid[r,p]\inverse\parr\id[q]} 
      & q \parr (p \tens r) 
      \ar[dr]_-{\id[q]\parr\tbraid[r,p]\inverse} 
      && (r \tens p) \parr q 
      \ar[dl]^-{\pbraid[q,r \tens p]} 
      \\ & (p \tens r) \parr q 
      &&& q \parr (r \tens p) }
  \end{diagram}
\end{lem}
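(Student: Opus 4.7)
The plan is to derive all four diagrams from the naturality and hexagon axioms for $\tbraid$, by expressing $\pbraid$ and the linear distributions $\llcad$, $\rrcad$ in terms of $\tbraid$ and the duality of the \staut\ structure. First I would recall that in a \brdstaut\ category the braiding $\pbraid$ is determined by $\tbraid$: up to the de~Morgan isomorphism $p\parr q\iso\perp{(\perp{q}\tens\perp{p})}$, the component $\pbraid[p,q]$ is the mate of $\tbraid[\perp{q},\perp{p}]$. Similarly, both $\llcad$ and $\rrcad$ admit the standard decomposition (see \cite{CocSee:ldc}) into an instance of the triangle identity for duality composed with an associator for $\tens$. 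Collecting these facts presents every arrow appearing in the four diagrams as a composite built from $\tbraid$, associators, unit/counit of duality, and de~Morgan isomorphisms.

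Next I would tackle the first diagram in isolation. Applying the duality transposition, both sides translate to parallel arrows between objects built from $\tens$ and $\perp$ alone. After cancelling matched unit/counit pairs via triangle identities, the resulting equation asserts precisely one instance of the hexagon axiom for $\tbraid$---the one describing how swapping $r$ past $\perp{q}\tens\perp{p}$ factors as swapping $r$ past $\perp{q}$ and then past $\perp{p}$. The remaining three diagrams admit symmetric arguments, obtained by interchanging $\tens\leftrightarrow\parr$ and reversing the orientation of the relevant braidings.

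The main obstacle is not conceptual but bookkeeping: each diagram hides a dozen or so associators, de~Morgan isomorphisms, and cancellation isomorphisms, and one must carefully track the direction of the dotted arrows throughout. A substantially cleaner route, foreshadowed by the strictification of section~\ref{sec:zang}, is to replace $\K$ by the equivalent \staut\ category $\Zang\K$, in which the de~Morgan and cancellation isomorphisms are identities. In that setting the four diagrams collapse to manifest isotopies of string diagrams in the graphical calculus for braided \staut\ categories, which can be verified by direct inspection after absorbing $\pbraid$ into the standard crossing notation; indeed, the figures in Remark~\ref{rem:nplds} are already drawn in that calculus, and the present lemma is simply the statement that those pictures are unambiguously defined.
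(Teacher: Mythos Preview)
Your proposal contains two genuine gaps.

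First, the claim that $\llcad$ and $\rrcad$ ``admit the standard decomposition \dots\ into an instance of the triangle identity for duality composed with an associator for~$\tens$'' is not correct as stated. In a linearly distributive category the linear distributions are primitive structure, and while in a \staut\ category they can be re-expressed using the closed structure, no such two-step decomposition exists. The paper does \emph{not} decompose $\llcad$ or $\rrcad$ at all: it works in the planar graphical calculus of \cite{BCST}, where the linear distributions are encoded directly by switching and non-switching links, and treats components of $\tbraid$ and $\tbraid\inverse$ as opaque boxes. The key inputs are then naturality of $\tbraid$ and one hexagon axiom, applied to an explicit chain of planar string diagrams; $\pbraid$ is unwound to its defining planar composite in terms of $\tbraid$ and the (co)units of duality. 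Your first approach is in the same spirit, but because the decomposition you invoke is spurious, the subsequent ``cancel matched unit/counit pairs and obtain a hexagon'' step is unsupported.

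Second, your alternative route via $\Zang\K$ is circular. Strictifying the negations makes the de~Morgan and cancellation isomorphisms identities, but it does nothing to establish that the \emph{braided} graphical calculus for \staut\ categories is sound. Indeed, the purpose of the present lemma is exactly to show that the non-planar wire-crossings of Figure~\ref{fig:nplds} are well-defined---so appealing to ``manifest isotopies of string diagrams in the graphical calculus for braided \staut\ categories'' presupposes the result you are trying to prove. The paper avoids this by never leaving the planar calculus.
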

\begin{proof}
  We prove only the first solid diagram;
  the second solid diagram follows by a symmetric argument, 
  and the two dotted diagrams are obtained by attaching naturality
  squares. 

  We work in the graphical calculus for \emph{planar} \staut\
  categories; a box labelled with a plus sign denotes a component of
  $\tbraid$; a box labelled with a minus sign denotes a component of
  $\tbraid\inverse$; the components of $\pbraid$ and $\pbraid\inverse$
  are defined by   
\begin{center}
  \begin{pspicture}(0,1)(16,14)
    \psline(0,14)(0,4)
    \psarcn(3,4){3}{0}{180}
    \psline(6,4)(6,6)
    \psline(4,8)(6,6)(8,8)
    \psarc(10,8){2}{0}{180}
    \psarc(10,8){6}{0}{180}
    \psline(12,8)(12,1)
    \psline(16,8)(16,1)
    \psframe[fillstyle=solid,fillcolor=boxcol](4.5,4.5)(7.5,7.5)
    \rput(6,6){\tiny $+$}
  \end{pspicture}
  \qquad
  \begin{pspicture}(0,1)(16,14)
    \psline(0,14)(0,4)
    \psarcn(3,4){3}{0}{180}
    \psline(6,4)(6,6)
    \psline(4,8)(6,6)(8,8)
    \psarc(10,8){2}{0}{180}
    \psarc(10,8){6}{0}{180}
    \psline(12,8)(12,1)
    \psline(16,8)(16,1)
    \psframe[fillstyle=solid,fillcolor=boxcol](4.5,4.5)(7.5,7.5)
    \rput(6,6){\tiny $-$}
  \end{pspicture}
\end{center}
respectively; \emph{switching links} are denoted by moustaches, and 
\emph{non-switching links} by bullets; what we need to show is
summarised below.   
\begin{center}
  \begin{pspicture}[shift=-17](0,-1)(31,36)
    \psline(0,36)(0,24)
    \psarcn(3,24){3}{0}{180}
    \psline(6,24)(6,26)
    \psline(4,28)(6,26)(8,28)
    \psarc(10,28){2}{0}{180}
    \psarc(10,28){6}{0}{180}
    \psline(12,28)(12,17)
    \psline(16,28)(16,21)
    \psframe[fillstyle=solid,fillcolor=boxcol](4.5,24.5)(7.5,27.5)
    \psframe[linestyle=dotted](-1,20)(17,35)
    \rput(6,26){\tiny $-$}
    \psline(15,16)(15,4)
    \psarcn(18,4){3}{0}{180}
    \psline(21,4)(21,6)
    \psline(19,8)(21,6)(23,8)
    \psarc(25,8){2}{0}{180}
    \psarc(25,8){6}{0}{180}
    \psline(27,8)(27,-1)
    \psline(31,8)(31,-1)
    \psframe[fillstyle=solid,fillcolor=boxcol](19.5,4.5)(22.5,7.5)
    \rput(21,6){\tiny $+$}
    \psarcn(18,21){2}{0}{180}
    \psdots(18,19)
    \psline(18,19)(18,17)
    \psline(20,21)(20,36)
    \rput{270}(15,17){\Huge $\left.\right\}$}
    \psframe[linestyle=dotted](14,0)(32,15)
  \end{pspicture}
  {\quad $\stackrel?=$ \quad}
  \begin{pspicture}[shift=-9](-1,3)(7,20) 
    \psline(2,15)(2,18)(4,20)
    \psline(2,18)(0,20)
    \rput{90}(2,14){\Huge $\left.\right\}$}
    \psline(5,14)(5,12)
    \psdots(5,12)
    \psarc(5,10){2}{0}{180}
    \psline(3,10)(3,8)(1,6)(-1,8)(-1,14)
    \psline(1,6)(1,3.5)
    \psframe[fillstyle=solid,fillcolor=boxcol](3.5,16.5)(0.5,19.5)
    \rput(2,18){\tiny $+$}    
    \psframe[fillstyle=solid,fillcolor=boxcol](-0.5,4.5)(2.5,7.5)
    \rput(1,6){\tiny $-$}
    \psline(7,10)(7,3.5)
  \end{pspicture}
\end{center}

By naturality and (a variant of) the braiding axiom, we obtain 
\begin{center}
  \begin{pspicture}[shift=-4](0,0)(16,20)
    \psline(8,0)(8,4)(0,12)(0,14)
    \psline(8,4)(16,12)(16,14)
    \psarc(2,14){2}{0}{180}
    \psline(8,14)(8,20)
    \psarcn(6,14){2}{0}{180}
    \psarcn(9,12){3}{0}{180}
    \psline(12,12)(12,14)
    \rput{90}(14,14){\Large $\left.\right\}$}
    \psline(14,15)(14,18)(12,20)
    \psline(14,18)(16,20)
    \psdots(6,12)
    \psframe[fillstyle=solid,fillcolor=boxcol](6.5,2.5)(9.5,5.5)
    \rput(8,4){\tiny $-$}
    \psframe[fillstyle=solid,fillcolor=boxcol](12.5,16.5)(15.5,19.5)
    \rput(14,18){\tiny $+$}    
  \end{pspicture}
  {~$=$~}
  \begin{pspicture}[shift=-13](0,-3)(16,26)
    \psline(8,-3)(8,1)(0,9)(0,14)
    \psline(8,1)(16,9)(16,20)
    \psarc(2,14){2}{0}{180}
    \psarcn(6,14){2}{0}{180}
    \psarcn(9,12){3}{0}{180}
    \psline(8,14)(8,15)
    \psline(12,12)(12,15)
    \rput{90}(10,15){\Large $\left.\right\}$}
    \psline(10,16)(10,18)
    \psarcn(10,20){2}{0}{180}
    \psline(8,20)(8,26)
    \rput{90}(14,20){\Large $\left.\right\}$}
    \psline(14,21)(14,24)(12,26)
    \psline(14,24)(16,26)
    \psdots(6,12)(10,18)
    \psframe[fillstyle=solid,fillcolor=boxcol](6.5,-0.5)(9.5,2.5)
    \rput(8,1){\tiny $-$}
    \psframe[fillstyle=solid,fillcolor=boxcol](12.5,22.5)(15.5,25.5)
    \rput(14,24){\tiny $+$}    
  \end{pspicture}
  {~$\stackrel{N}=$~}
  \begin{pspicture}[shift=-18](6,-6)(22,28)
    \psline(8,-3)(8,-6)
    \psdots(8,-3)
    \psarcn(8,-1){2}{0}{180}
    \psline(10,4)(10,-1)
    \psarc(12,4){2}{0}{180}
    \psarcn(16,4){2}{0}{180}
    \psarcn(19,2){3}{0}{180}
    \psline(18,5)(18,4)
    \psline(22,5)(22,2)
    \rput{90}(20,5){\Large $\left.\right\}$} 
    \psdots(16,2)
    \psline(6,-1)(6,7)(10,11)(10,13)
    \psline(20,6)(20,7)(16,11)(16,13)
    \rput{90}(13,13){\Huge $\left.\right\}$}
    \psline(13,14)(13,17)(16,20)(16,22)
    \psline(13,17)(10,20)
    \psarcn(10,22){2}{0}{180}
    \psline(8,22)(8,28)
    \rput{90}(14,22){\Large $\left.\right\}$}
    \psline(14,23)(14,26)(12,28)
    \psline(14,26)(16,28)
    \psdots(10,20)
    \psframe[fillstyle=solid,fillcolor=boxcol](11.5,15.5)(14.5,18.5)
    \rput(13,17){\tiny $-$}
    \psframe[fillstyle=solid,fillcolor=boxcol](12.5,24.5)(15.5,27.5)
    \rput(14,26){\tiny $+$}    
  \end{pspicture}
  {~$\stackrel{B}=$~}
  \begin{pspicture}[shift=-18](6,-6)(22,28)
    \psline(8,-3)(8,-6)
    \psdots(8,-3)
    \psarcn(8,-1){2}{0}{180}
    \psline(10,4)(10,-1)
    \psarc(12,4){2}{0}{180}
    \psarcn(16,4){2}{0}{180}
    \psarcn(19,2){3}{0}{180}
    \psline(22,4)(22,2)
    \psline(18,5)(18,4)
    \psline(22,5)(22,2)
    \rput{90}(20,5){\Large $\left.\right\}$} 
    \psdots(16,2)
    \psline(6,-1)(6,7)(10,11)(10,13)
    \psline(20,6)(20,7)(16,11)(16,13)
    \rput{90}(13,13){\Huge $\left.\right\}$}
    \psline(13,14)(13,17) 
    \psdots(13,17)(16,20)
    \psarcn(13,20){3}{0}{180}
    \psline(10,20)(10,22)
    \psarcn(16,22){2}{0}{180}
    \psline(18,22)(18,28)
    \rput{90}(12,22){\Large $\left.\right\}$}
    \psline(12,23)(12,26)(10,28)
    \psline(12,26)(14,28)
    \psframe[fillstyle=solid,fillcolor=boxcol](10.5,24.5)(13.5,27.5)
    \rput(12,26){\tiny $-$}
  \end{pspicture}
  {~$=$~}
  \begin{pspicture}[shift=-8](6,-6)(22,18)
    \psline(8,-3)(8,-6)
    \psdots(8,-3)
    \psarcn(8,-1){2}{0}{180}
    \psline(10,4)(10,-1)
    \psarc(12,4){2}{0}{180}
    \psarcn(16,4){2}{0}{180}
    \psarcn(19,2){3}{0}{180}
    \psline(22,2)(22,18)
    \psdots(16,2)
    \psline(6,-1)(6,6)(10,10)(10,12)
    \psline(18,4)(18,6)(14,10)(14,12)
    \rput{90}(12,12){\Large $\left.\right\}$}
    \psline(12,13)(12,16)(10,18)
    \psline(12,16)(14,18)
    \psframe[fillstyle=solid,fillcolor=boxcol](10.5,14.5)(13.5,17.5)
    \rput(12,16){\tiny $-$}
  \end{pspicture}
\end{center}
and, by an almost identical argument, one can also derive 
  \begin{pspicture}[shift=-9](-2,2)(11,20) 
    \psline(2,15)(2,18)(4,20)
    \psline(2,18)(0,20)
    \rput{90}(2,14){\Huge $\left.\right\}$}
    \psline(5,14)(5,12)
    \psdots(5,12)
    \psarc(5,10){2}{0}{180}
    \psline(3,10)(3,8)(1,6)(-1,8)(-1,14)
    \psline(1,6)(1,2)
    \psframe[fillstyle=solid,fillcolor=boxcol](3.5,16.5)(0.5,19.5)
    \rput(2,18){\tiny $+$}    
    \psarcn(9,10){2}{0}{180}
    \psline(11,10)(11,20)
    \psframe[fillstyle=solid,fillcolor=boxcol](-0.5,4.5)(2.5,7.5)
    \rput(1,6){\tiny $-$}
  \end{pspicture}
  = 
  \begin{pspicture}[shift=-9](-2,0)(11,18)
    \psline(4,2)(4,0)
    \psdots(4,2)
    \psarc(0,8){2}{0}{180}
    \psarcn(4,8){2}{0}{180}
    \psarcn(4,8){6}{0}{180}
    \psline(0,10)(0,18)
    \psdots(0,10)
    \psline(10,8)(10,12)
    \psline(6,8)(6,12)
    \rput{90}(8,12){\Large $\left.\right\}$}
    \psline(8,13)(8,16)(10,18)
    \psline(8,16)(6,18)
    \psframe[fillstyle=solid,fillcolor=boxcol](9.5,14.5)(6.5,17.5)
    \rput(8,16){\tiny $-$}
  \end{pspicture}.
Hence, 
\begin{center}
\end{center}
\begin{center}
  \begin{pspicture}[shift=-17](-1,-1)(32,36)
    \psline(0,36)(0,24)
    \psarcn(3,24){3}{0}{180}
    \psline(6,24)(6,26)
    \psline(4,28)(6,26)(8,28)
    \psarc(10,28){2}{0}{180}
    \psarc(10,28){6}{0}{180}
    \psline(12,28)(12,17)
    \psline(16,28)(16,21)
    \psframe[fillstyle=solid,fillcolor=boxcol](4.5,24.5)(7.5,27.5)
    \rput(6,26){\tiny $-$}
    \psline(15,16)(15,4)
    \psarcn(18,4){3}{0}{180}
    \psline(21,4)(21,6)
    \psline(19,8)(21,6)(23,8)
    \psarc(25,8){2}{0}{180}
    \psarc(25,8){6}{0}{180}
    \psline(27,8)(27,-1)
    \psline(31,8)(31,-1)
    \psframe[fillstyle=solid,fillcolor=boxcol](19.5,4.5)(22.5,7.5)
    \rput(21,6){\tiny $+$}
    \psarcn(18,21){2}{0}{180}
    \psdots(18,19)
    \psline(18,19)(18,17)
    \psline(20,21)(20,36)
    \rput{270}(15,17){\Huge $\left.\right\}$}
  \end{pspicture}
  {~$=$~}
  \begin{pspicture}[shift=-6](0,1)(32,22)
    \psline(0,22)(0,4)
    \psarcn(3,4){3}{0}{180}
    \psline(6,4)(6,6)
    \psline(4,13)(4,8)(6,6)(8,8)
    \psarc(10,8){2}{0}{180}
    \psarc(9,13){5}{0}{180}
    \psarcn(16,13){2}{0}{180}
    \psline(18,13)(18,22)
    \psdots(16,11)
    \psline(16,8)(16,11)
    \psframe[fillstyle=solid,fillcolor=boxcol](4.5,4.5)(7.5,7.5)
    \rput(6,6){\tiny $-$}
    \psarcn(18,8){2}{0}{180}
    \psarcn(18,8){6}{0}{180}
    \rput{90}(22,8){\Large $\left.\right\}$}
    \psline(22,9)(22,14)
    \psline(20,16)(22,14)(24,16)
    \psarc(26,16){2}{0}{180}
    \psarc(26,16){6}{0}{180}
    \psline(28,16)(28,1)
    \psline(32,16)(32,1)
    \psframe[fillstyle=solid,fillcolor=boxcol](20.5,12.5)(23.5,15.5)
    \rput(22,14){\tiny $+$}
  \end{pspicture}
  {~$=$~}
  \begin{pspicture}[shift=-12](-4,-6)(24,26)
    \psline(8,0)(8,4)(0,12)(0,14)
    \psline(8,4)(16,12)(16,14)
    \psarc(2,14){2}{0}{180}
    \psarcn(6,14){2}{0}{180}
    \psline(8,14)(8,26)
    \psarcn(9,12){3}{0}{180}
    \psline(12,12)(12,14)
    \rput{90}(14,14){\Large $\left.\right\}$}
    \psline(14,15)(14,18)(12,20)
    \psline(14,18)(16,20)
    \psdots(6,12)
    \psframe[fillstyle=solid,fillcolor=boxcol](6.5,2.5)(9.5,5.5)
    \rput(8,4){\tiny $-$}
    \psframe[fillstyle=solid,fillcolor=boxcol](12.5,16.5)(15.5,19.5)
    \rput(14,18){\tiny $+$}    
    \psarc(18,20){2}{0}{180}
    \psarc(18,20){6}{0}{180}
    \psline(20,20)(20,-6)
    \psline(24,20)(24,-6)
    \psarcn(2,0){6}{0}{180} 
    \psline(-4,0)(-4,26)
    \psframe[linestyle=dotted](-2,0)(18,22)
  \end{pspicture}
  {~$=$~}
  \begin{pspicture}[shift=-14](2,-6)(30,28)
    \psdots(8,-3)
    \psarcn(8,-1){2}{0}{180}
    \psline(10,4)(10,-1)
    \psarc(12,4){2}{0}{180}
    \psarcn(16,4){2}{0}{180}
    \psarcn(19,2){3}{0}{180}
    \psline(22,2)(22,22)
    \psdots(16,2)
    \psline(6,-1)(6,6)(10,10)(10,12)
    \psline(18,4)(18,6)(14,10)(14,12)
    \rput{90}(12,12){\Large $\left.\right\}$}
    \psline(12,13)(12,16)(6,22)(6,28)
    \psline(12,16)(18,22)
    \psframe[fillstyle=solid,fillcolor=boxcol](10.5,14.5)(13.5,17.5)
    \rput(12,16){\tiny $-$}
    \psarc(24,22){2}{0}{180}
    \psarc(24,22){6}{0}{180}
    \psline(26,22)(26,-6)
    \psline(30,22)(30,-6)
    \psline(2,-3)(2,28)
    \psarcn(5,-3){3}{0}{180}
    \psframe[linestyle=dotted](4,-5)(24,19)
  \end{pspicture}
  {~$=$~}
  \begin{pspicture}[shift=-14](-2,-6)(22,24)
    \psarc(12,4){2}{0}{180}
    \psarcn(16,4){2}{0}{180}
    \psline(22,-2)(22,22)
    \psline(0,6)(0,24)
    \psdots(16,2)(0,6)
    \psline(16,2)(16,-2)
    \psarc(0,4){2}{0}{180}
    \psarcn(4,4){2}{0}{180}
    \psarcn(4,4){6}{0}{180}
    \psline(6,4)(6,6)(10,10)(10,12)
    \psline(18,4)(18,6)(14,10)(14,12)
    \rput{90}(12,12){\Large $\left.\right\}$}
    \psline(12,13)(12,16)(6,22)(6,24)
    \psline(12,16)(18,22)
    \psframe[fillstyle=solid,fillcolor=boxcol](10.5,14.5)(13.5,17.5)
    \rput(12,16){\tiny $-$}
    \psarc(20,22){2}{0}{180}
  \end{pspicture}
  {~$=$~}
  \begin{pspicture}[shift=-9](-3,0)(14,20)
    \psline(4,2)(4,0)
    \psdots(4,2)
    \psarc(0,8){2}{0}{180}
    \psarcn(4,8){2}{0}{180}
    \psarcn(4,8){6}{0}{180}
    \psline(0,10)(0,20)
    \psdots(0,10)
    \psline(10,8)(10,12)
    \psline(6,8)(6,12)
    \rput{90}(8,12){\Large $\left.\right\}$}
    \psline(8,13)(8,16)(10,18)
    \psline(8,16)(6,18)(6,20)
    \psframe[fillstyle=solid,fillcolor=boxcol](9.5,14.5)(6.5,17.5)
    \rput(8,16){\tiny $-$}
    \psarc(12,18){2}{0}{180}
    \psline(14,18)(14,0)
    \psframe[linestyle=dotted](-3,0.5)(12,18.5)
  \end{pspicture}
  {~$=$~}
  \begin{pspicture}[shift=-9](-3,2)(15,22) 
    \psline(2,15)(2,18)(4,20)(4,22)
    \psline(2,18)(0,20)(0,22)
    \rput{90}(2,14){\Huge $\left.\right\}$}
    \psline(5,14)(5,12)
    \psdots(5,12)
    \psarc(5,10){2}{0}{180}
    \psline(3,10)(3,8)(1,6)(-1,8)(-1,14)
    \psline(1,6)(1,2)
    \psframe[fillstyle=solid,fillcolor=boxcol](3.5,16.5)(0.5,19.5)
    \rput(2,18){\tiny $+$}    
    \psarcn(9,10){2}{0}{180}
    \psline(11,10)(11,20)
    \psframe[fillstyle=solid,fillcolor=boxcol](-0.5,4.5)(2.5,7.5)
    \rput(1,6){\tiny $-$}
    \psarc(13,20){2}{0}{180}
    \psline(15,20)(15,2)
    \psframe[linestyle=dotted](-3,2.5)(12,20.5)
  \end{pspicture}
  {~$=$~}
  \begin{pspicture}[shift=-9](-2,2)(7,22) 
    \psline(2,15)(2,18)(4,20)(4,22)
    \psline(2,18)(0,20)(0,22)
    \rput{90}(2,14){\Huge $\left.\right\}$}
    \psline(5,14)(5,12)
    \psdots(5,12)
    \psarc(5,10){2}{0}{180}
    \psline(3,10)(3,8)(1,6)(-1,8)(-1,14)
    \psline(1,6)(1,2)
    \psframe[fillstyle=solid,fillcolor=boxcol](3.5,16.5)(0.5,19.5)
    \rput(2,18){\tiny $+$}    
    \psframe[fillstyle=solid,fillcolor=boxcol](-0.5,4.5)(2.5,7.5)
    \rput(1,6){\tiny $-$}
    \psline(7,10)(7,2)
  \end{pspicture}
  \begin{pspicture}(0,0)(0,18)    
  \end{pspicture} 
\end{center}
as desired.
\end{proof}

\let\c\cedille

\end{document}